\definecolor{pku}{RGB}{139,0,18}
\let\pa=\partial
\let\f=\frac
\let\pa=\partial
\let\na=\nabla
\let\al=\alpha
\let\b=\beta
\let\d=\delta
\let\D=\Delta
\let\g=\gamma
\let\lam=\lambda
\let\Lam=\Lambda
\let\om=\omega
\let\va=\varphi
\let\r=\rho
\let\th=\theta
\def\di{\mathrm{div}\,}
\def\osc{\mathrm{osc}\,}
\newcommand{\beq}{\begin{equation}}
\newcommand{\eeq}{\end{equation}}
\newcommand{\beqo}{\begin{equation*}}
\newcommand{\eeqo}{\end{equation*}}
\newcommand{\ben}{\begin{eqnarray}}
\newcommand{\een}{\end{eqnarray}}
\newcommand{\beno}{\begin{eqnarray*}}
\newcommand{\eeno}{\end{eqnarray*}}
\newtheorem{thm}{Theorem}[section]
\newtheorem{lem}{Lemma}[section]
\newtheorem{cor}{Corollary}[section]
\newtheorem{prop}{Proposition}[section]
\theoremstyle{definition}
\newtheorem{definition}{Definition}[section]
\theoremstyle{remark}
\newtheorem{step}{Step}
\newtheorem{rmk}{Remark}[section]
\newcommand{\pv}{\mathrm{p.v.}}
\newcommand{\CC}{\mathcal{C}}
\newcommand{\CE}{\mathcal{E}}
\newcommand{\CR}{\mathcal{R}}
\newcommand{\CF}{\mathcal{F}}
\newcommand{\CT}{\mathcal{T}}
\newcommand{\CP}{\mathcal{P}}
\newcommand{\CH}{\mathcal{H}}
\newcommand{\CN}{\mathcal{N}}
\newcommand{\BR}{\mathbb{R}}
\newcommand{\BZ}{\mathbb{Z}}
\newcommand{\BT}{\mathbb{T}}
\newcommand{\BN}{\mathbb{N}}
\numberwithin{equation}{section}
\begin{document}

\title[Tangential Peskin Problem]{Global Solutions to the Tangential Peskin Problem in 2-D}
\author{Jiajun Tong}
\address{Beijing International Center for Mathematical Research, Peking University, China}
\email{tongj@bicmr.pku.edu.cn}

\date{\today}
\maketitle

\begin{abstract}
We introduce and study the tangential Peskin problem in 2-D, which is a scalar drift-diffusion equation with a nonlocal drift.
It is derived with a new Eulerian perspective from a special setting of the 2-D Peskin problem where an infinitely long and straight 1-D elastic string deforms tangentially in the Stokes flow induced by itself in the plane.
For initial datum in the energy class satisfying natural weak assumptions, we prove existence of its global solutions.
This is considered as a super-critical problem in the existing analysis of the Peskin problem based on Lagrangian formulations.
Regularity and long-time behavior of the constructed solution is established.
Uniqueness of the solution is proved under additional assumptions.
%Such analysis provides useful guidance for studying the original 2-D Peskin problem.
\end{abstract}

\section{Introduction}
\label{sec: intro}
\subsection{Problem formulation and main results}
Consider the following scalar drift-diffusion equation on $\BT := \BR/(2\pi \BZ) = [-\pi,\pi)$,
\beq
\pa_t f
= \CH f \cdot \pa_x f - f  (-\D)^{\f12} f,\quad f(x,0) = f_0(x).
\label{eqn: the main equation}
\eeq
Here $\CH$ denotes the Hilbert transform on $\BT$,
\beq
\CH f(x) : = \f{1}{\pi}\mathrm{p.v.}\int_\BT \f{f(y)}{2\tan \big(\f{x-y}{2}\big)}\,dy,
\label{eqn: Hilbert transform}
\eeq
and
\beq
(-\D)^{\f12} f(x) : = \f{1}{\pi}\mathrm{p.v.}\int_\BT \f{f(x)-f(y)}{4\sin ^2\big(\f{x-y}{2}\big)}\,dy = \CH f'(x).
\label{eqn: fractional Laplacian}
\eeq
In the paper, we want to study positive global solutions $f= f(x,t)$ to \eqref{eqn: the main equation} under rather weak assumptions on the initial data.

In Section \ref{sec: derivation} of the paper, we will show that the drift-diffusion equation \eqref{eqn: the main equation} arises as a scalar model of the 2-D Peskin problem.
In general, the 2-D Peskin problem, also known as the 2-D Stokes immersed boundary problem in the literature,  describes coupled motion of a 2-D Stokes flow and 1-D closed elastic string immersed in it \cite{lin2019solvability,mori2019well,rodenberg20182d,tong2018stokes}.
Its mathematical formulation and related analytical works will be reviewed in Section \ref{sec: 2D Peskin}.
The equation \eqref{eqn: the main equation} stems from a special setting of it, where an infinitely long and straight elastic string deforms  and moves only \emph{tangentially} in a Stokes flow in $\BR^2$ induced by itself.
If we assume the string lies along the $x$-axis, then $f = f(x,t)$ in \eqref{eqn: the main equation} represents how much the string segment at the spatial point $(x,0)$ gets stretched in the horizontal direction (see \eqref{eqn: def of f}).
Thus, we shall call \eqref{eqn: the main equation} the \emph{tangential Peskin problem} in 2-D.
Analysis of such simplified models may improve our understanding of well-posedness and possible blow-up mechanism of the original 2-D Peskin problem. % with general initial string configurations.
See Section \ref{sec: derivation of the model} for more detailed discussions.

Suppose $f$ is a positive smooth solution to \eqref{eqn: the main equation}.
It is straightforward to find that $F:=\f1{f}$ verifies a conservation law
\beq
\pa_t F = \pa_x\big(\CH f \cdot F\big),\quad F(x,0) = F_0(x):=\big(f_0(x)\big)^{-1}.
\label{eqn: equation for big F}
\eeq
This leads to the following definition of (positive) weak solutions to \eqref{eqn: the main equation}.

\begin{definition}[Weak solution]
\label{def: weak solution}
Let $f_0\in L^1(\BT)$ such that $f_0>0$ almost everywhere, and $F_0:=\f{1}{f_0}\in L^1(\BT)$.
For $T\in [0,+\infty]$, we say that $f =f(x,t)$ is a (positive) weak solution to \eqref{eqn: the main equation} on $\BT\times [0,T)$, if the followings hold.
\begin{enumerate}[(a)]
\item $f>0$ a.e.\;in $\BT\times [0,T)$, and $F\in L^\infty([0,T); L^1(\BT))$, where $F =\f{1}{f}$ a.e.\;in $\BT\times [0,T)$;
%\textcolor{red}{Any continuity at the initial time to make sense of the initial data?}
\item $\CH f\cdot F\in L^1_{loc}(\BT\times [0,T))$;
\item For any $\va \in C_0^\infty(\BT\times [0,T))$,
\beq
\int_{\BT} \va(x,0) F_0(x)\,dx + \int_{\BT\times [0,T)} \pa_t \va \cdot  F\,dx\,dt
= \int_{\BT\times [0,T)} \pa_x \va \cdot \CH f \cdot F\,dx\,dt.
\label{eqn: weak formulation}
\eeq
\end{enumerate}
In this case, we also say $F$ is a (positive) weak solution to \eqref{eqn: equation for big F}.

If $T=+\infty$, the weak solution is said to be global.
\end{definition}

\begin{rmk}
Given \eqref{eqn: the main equation}, it is tempting to define the weak solutions by requiring
\[
\begin{split}
&\;\int_{\BT} \va(x,0) f_0(x)\,dx + \int_{\BT\times [0,T)} \pa_t \va \cdot  f\,dx\,dt\\
= &\; \int_{\BT\times [0,T)} \pa_x \va \cdot \CH f \cdot f \,dx\,dt
+
2 \int_{\BT\times [0,T)} (-\D)^{\f14}(\va f)(-\D)^{\f14} f \,dx\,dt
\end{split}
\]
to hold for any $\va \in C_0^\infty(\BT\times [0,T))$.
Although this formulation imposes no regularity assumptions on $1/f$, it does not effectively relocate the derivative to the test function, so we choose not to work with it.
\end{rmk}

We introduce the $H^s$-semi-norms for $s>0$,
\[
\|f\|_{\dot{H}^s(\BT)}^2: = \sum_{k\in \BZ} |k|^{2s} |\hat{f}_k|^2,
\]
where the Fourier transform of a function $f$ in $L^1(\BT)$ is defined by
\beq
\CF(f)_k = \hat{f}_k := \f{1}{2\pi}\int_\BT f(x)e^{-ikx}\,dx.
\label{eqn: Fourier transform}
\eeq

Our first result is on the existence of a global weak solution with nice properties.
\begin{thm}
\label{thm: main thm}
Suppose $f_0\in L^1(\BT)$ satisfies $f_0>0$ almost everywhere, with $F_0:=\f{1}{f_0}\in L^1(\BT)$.
Then \eqref{eqn: the main equation} admits a global weak solution $f = f(x,t)$ in the sense of Definition \ref{def: weak solution}.
It has the following properties.
\begin{enumerate}

\item \label{property: energy inequality in the main thm}
    (Energy relation) $f\in L^\infty([0,+\infty); L^1 (\BT))\cap L^2([0,+\infty); \dot{H}^{1/2}(\BT))$.
    For any $t\geq 0$,
    \beqo
    \|f(\cdot,t)\|_{L^1} + 2\int_0^t \|f(\cdot,\tau)\|_{\dot{H}^{1/2} }^2\,d\tau \leq \|f_0\|_{L^1},
    %\label{eqn: energy inequality}
    \eeqo
    while for any $0<t_1\leq t_2$,
    \beqo
    \|f(\cdot,t_2)\|_{L^1} + 2\int_{t_1}^{t_2} \|f(\cdot,\tau)\|_{\dot{H}^{1/2} }^2\,d\tau = \|f(\cdot,t_1)\|_{L^1}.
    \eeqo

\item \label{property: positivity and smoothness}
    (Instant positivity and smoothness) For any $t_0>0$, the solution $f$ is $C^\infty$ and positive in $\BT\times [t_0,+\infty)$, so $f$ is a positive strong solution to \eqref{eqn: the main equation} for all positive times (cf.\;Definition \ref{def: strong solution}).
    To be more precise, define $F = \f1f$ pointwise on $\BT\times (0,+\infty)$.
    Then $\|f(\cdot,t)\|_{L^\infty}$ and $\|F(\cdot,t)\|_{L^\infty}$ are non-increasing in $t\in (0,+\infty)$.
    They satisfy that
    \beq
\|f(\cdot,t)\|_{L^\infty(\BT)}\leq Ct^{-\f12} \|f_0\|_{L^1}^{\f12}\quad \forall\, t\in (0,\|f_0\|_{L^1}^{-1}],
\label{eqn: upper bound for f}
\eeq
where $C$ is a universal constant, and that
\beq
\|F(\cdot,t)\|_{L^\infty}\leq \f18\|F_0\|_{L^1}
\exp\left[\coth\left(\f{4}{\pi}\|F_0\|_{L^1}^{-1} t\right)\right]\quad \forall\, t>0.
\label{eqn: upper bound for F}
\eeq
Moreover, for any $\alpha\in (0,\f15)$, $\|f(\cdot,t)\|_{\dot{C}^{\al}}\in L^1([0,+\infty))$, satisfying that, for $t\in (0, \|f_0\|_{L^1}^{-1}]$,
\[
\int_0^t \|f(\cdot,\tau)\|_{\dot{C}^{\al}}\,d\tau
\leq C_\al t^{\f{1-5\al}{2}} \big(\|f_0\|_{L^1}-2\pi f_\infty\big)^{\f{1-2\al}{2}}
\|f_0\|_{L^1}^{\f{\al}{2}} \|F_0\|_{L^1}^{2\al},
\]
where $C_\al>0$ is a universal constant depending on $\al$, and where $f_\infty$ is defined in \eqref{eqn: equilibrium} below.

\item (Conservation law)
    $\|F(\cdot,t)\|_{L^1} = \|F_0\|_{L^1}$ for all $t\geq 0$, and the map $t\mapsto F(\cdot,t)/\|F_0\|_{L^1}$ is continuous from $[0,+\infty)$ to $\CP_1(\BT)$.
    Here $\CP_1(\BT)$ denotes the space of probability densities on $\BT$ equipped with $1$-Wasserstein distance $W_1$ %.
    %The $W_1$-distance is defined under the natural metric on $\BT$
    (see e.g.\;\cite[Chapter 6]{villani2009optimal}).

\item \label{property: long-time convergence}
    (Long-time exponential convergence)
    Define
\beq
f_\infty : = \left(\f{1}{2\pi}\int_\BT \f{1}{f_0(x)}\,dx\right)^{-1}.
\label{eqn: equilibrium}
\eeq
Then for arbitrary $j,k\in \BN$, $\|\pa_t^j \pa_x^k (f(\cdot,t)-f_\infty)\|_{L^2(\BT)}$ is finite for any $t>0$, and it decays to $0$ exponentially as $t\to +\infty$.
It admits an upper bound that only depends on $t$, $j$, $k$, $\|f_0\|_{L^1}$, and $\|F_0\|_{L^1}$.
See Proposition \ref{prop: boundedness and decay of H^k norms of h}.

\item \label{property: large-time analyticity}
    (Large-time analyticity and sharp decay) There exists $T_*>0$ depending only on $\|f_0\|_{L^1}$ and $\|F_0\|_{L^1}$, such that $f(x,t)$ is analytic in the space-time domain $\BT\times (T_*,+\infty)$.
    In addition, for any $t>0$,
    \beqo
    \|f(\cdot,t+T_*)\|_{\dot{\CF}_{\nu(t)}^{0,1}} \leq Cf_\infty,
    \eeqo
    with
    \[
    \nu(t)\geq \f12 \ln \left[1 + \exp(2f_\infty t)\right] - \f12 \ln 2,
    \]
    and $C>0$ being a universal constant.
    Here the $\dot{\CF}^{0,1}_{\nu(t)}$ semi-norms for Wiener-algebra-type spaces will be defined in Section \ref{sec: analyticity and sharp decay estimate}.
\end{enumerate}
\end{thm}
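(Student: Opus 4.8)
The plan is to obtain the solution by a regularized approximation scheme together with a set of a priori estimates that survive the limit, and then to bootstrap regularity and long-time behavior from the structure of \eqref{eqn: equation for big F}. \textbf{Construction.} First mollify and truncate the initial datum at the level of $F_0$: choose $F_0^{(n)}\in C^\infty(\BT)$ with $1/n\leq F_0^{(n)}$, $F_0^{(n)}\to F_0$ in $L^1$, and put $f_0^{(n)}=1/F_0^{(n)}$, which is smooth and bounded above and below. For such data \eqref{eqn: the main equation} is a nondegenerate first-order nonlocal parabolic equation, so a local-in-time smooth positive solution $f^{(n)}$ exists by a contraction/energy argument, and $F^{(n)}=1/f^{(n)}$ solves \eqref{eqn: equation for big F} classically. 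Two uniform estimates are immediate. Testing \eqref{eqn: the main equation} against $1$ and using $\int_\BT \CH f\,\pa_x f\,dx=-\int_\BT f\,(-\D)^{1/2}f\,dx=-\|f\|_{\dot H^{1/2}}^2$ gives the energy identity $\tfrac{d}{dt}\|f^{(n)}(\cdot,t)\|_{L^1}=-2\|f^{(n)}(\cdot,t)\|_{\dot H^{1/2}}^2$, hence the $L^\infty_tL^1_x\cap L^2_t\dot H^{1/2}_x$ control; and since \eqref{eqn: equation for big F} is a conservation law, $\|F^{(n)}(\cdot,t)\|_{L^1}=\|F_0^{(n)}\|_{L^1}$ is constant.

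\textbf{Quantitative smoothing and the lower bound.} A direct computation from \eqref{eqn: the main equation} yields, for $p\geq1$,
\[
\f{d}{dt}\|f^{(n)}(\cdot,t)\|_{L^p}^p=-(p+1)\int_\BT (f^{(n)})^p(-\D)^{1/2}f^{(n)}\,dx;
\]
combining the Stroock--Varopoulos inequality with the Sobolev/Nash inequality on $\BT$ and running a Moser-type iteration produces the $L^1\!\to\!L^\infty$ estimate \eqref{eqn: upper bound for f}, whose $t^{-1/2}$ rate is dictated by the scaling symmetry $f(x,t)\mapsto\lambda f(x,\lambda t)$, and also the monotonicity of $\|f^{(n)}(\cdot,t)\|_{L^\infty}$. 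For the crucial lower bound, rewrite \eqref{eqn: equation for big F} as $\pa_t F=\CH f\,\pa_x F+F(-\D)^{1/2}f$ and evaluate at a maximum point $x_*$ of $F(\cdot,t)$: the transport term drops, and because $\int_\BT F=\|F_0\|_{L^1}$ the arithmetic--harmonic inequality forces $f$ to exceed its harmonic mean somewhere, which — keeping the principal-value structure of \eqref{eqn: fractional Laplacian} — makes $(-\D)^{1/2}f(x_*)$ negative with a quantitative gain. This turns $\tfrac{d}{dt}\|F^{(n)}(\cdot,t)\|_{L^\infty}$ into a Riccati-type differential inequality whose comparison solution is the $\coth$ expression in \eqref{eqn: upper bound for F}. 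With these uniform-in-$n$ bounds in hand, the $L^1$ weak form of \eqref{eqn: equation for big F} controls $\pa_t F^{(n)}$ in a space of negative order; an Aubin--Lions argument then gives a subsequence with $f^{(n)}\to f$ strongly and $F^{(n)}\rightharpoonup F$, enough to pass to the limit in \eqref{eqn: weak formulation} and to transfer all estimates (the energy inequality by lower semicontinuity, the rest pointwise) to $f$, producing a global weak solution in the sense of Definition \ref{def: weak solution}.

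\textbf{Instant regularity and long-time convergence.} Fix $t_0>0$. On $\BT\times[t_0,\infty)$ the limit solution satisfies $0<c\leq f,F\leq C$ with explicit $c,C$ from \eqref{eqn: upper bound for f}--\eqref{eqn: upper bound for F}, so \eqref{eqn: the main equation} is a uniformly parabolic integro-differential equation of order $1$ with bounded measurable coefficients. De Giorgi--Nash--Moser theory for such equations yields an a priori $\dot C^\alpha$ bound for an explicit exponent, whose admissible range after tracking constants is $(0,\tfrac15)$; interpolating this with the short-time $L^1$/$L^\infty$ behavior gives the time-integrated $\dot C^\alpha$ estimate, and a Schauder bootstrap upgrades $f$ to $C^\infty$, making it a classical (strong) solution for positive times, whence the energy equality on $(0,T]$ and the $W_1$-continuity of $F/\|F_0\|_{L^1}$. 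For long-time behavior, Cauchy--Schwarz gives $\|f(\cdot,t)\|_{L^1}\geq(2\pi)^2/\|F_0\|_{L^1}=2\pi f_\infty$, so the non-increasing quantity $\|f(\cdot,t)\|_{L^1}$ converges; together with $\|f\|_{\dot H^{1/2}}\in L^2_t$ this forces $f(\cdot,t)\to f_\infty$. Writing $h=f-f_\infty$ (so $\pa_t h=\CH h\,\pa_x h-(f_\infty+h)(-\D)^{1/2}h$ since $\CH$ and $(-\D)^{1/2}$ kill constants), an $L^2$ energy estimate using the spectral gap $\|h\|_{\dot H^{1/2}}\geq\|h\|_{L^2}$ on the mean-zero part and the eventual smallness of $h$ closes $\tfrac{d}{dt}\|h\|_{L^2}^2\leq-c\|h\|_{L^2}^2$ for large $t$, giving exponential decay; parabolic smoothing then propagates this to every $\pa_t^j\pa_x^k h$ with a bound depending only on $t,j,k,\|f_0\|_{L^1},\|F_0\|_{L^1}$, as in Proposition \ref{prop: boundedness and decay of H^k norms of h}.

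\textbf{Analyticity and the remaining obstacle.} Past a time $T_*$ at which $h$ is small in a strong norm, a Gevrey/Wiener-algebra estimate controls $\sum_{k}e^{\nu(t)|k|}|\hat h_k(t)|$ with $\nu(t)$ chosen increasing so that $\nu'(t)$ exactly balances the nonlinear contribution against the $f_\infty$-dissipation at frequencies $|k|\geq1$; this yields analyticity on $\BT\times(T_*,\infty)$, the growth $\nu(t)\geq\tfrac12\ln[1+\exp(2f_\infty t)]-\tfrac12\ln2$, and the $\dot{\CF}^{0,1}_{\nu(t)}$-bound, as detailed in Section \ref{sec: analyticity and sharp decay estimate}. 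The genuine difficulty lies in the passage to the limit: the problem is super-critical with respect to the scaling leaving the $L^1$-class invariant, so at the initial time only $\|f\|_{L^1}$, $\|F\|_{L^1}$, and $\int_0^T\|f\|_{\dot H^{1/2}}^2$ are available, and extracting from these enough compactness to identify the nonlinear drift $\CH f\cdot F$ — together with the quantitative lower bound that keeps the diffusion from degenerating — is the heart of the argument; all later steps, being carried out where $f$ is already smooth and two-sided bounded, are comparatively standard.
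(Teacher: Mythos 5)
Your proposal has a genuine gap precisely at the spot you yourself flag as ``the heart of the argument,'' and the resolution you offer earlier does not actually close it. You write that an Aubin--Lions argument with $f^{(n)}\to f$ strongly and $F^{(n)}\rightharpoonup F$ weakly is ``enough to pass to the limit in \eqref{eqn: weak formulation},'' but it is not: the term $\int \pa_x\va\cdot\CH f^{(n)}\cdot F^{(n)}\,dx\,dt$ pairs a function controlled only in $L^2_t\dot H^{1/2}_x$ with a function controlled only in $L^\infty_t L^1_x$, and neither the Hilbert transform nor $\dot H^{1/2}$ lands you in $L^\infty_x$ on $\BT$, so the product is not integrable on the available information and there is no duality in which to pass to the limit. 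What makes this work in the paper is a \emph{uniform-in-$n$} estimate on $\int_0^t\|f^{(n)}(\cdot,\tau)\|_{\dot C^\alpha}\,d\tau$ established \emph{before} the limit is taken (Corollary \ref{cor: bound for H1 norm}), so that $\CH f^{(n)}\cdot F^{(n)}$ is uniformly in $L^1_{loc}(\BT\times[0,+\infty))$ and one can integrate away from $t=0$ first and then shrink the excluded window. Your proposal deduces the $\dot C^\alpha$ time-integrability only after the limit, by smoothing theory on $(t_0,\infty)$, which gives no control near $t=0$ and comes too late.

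Relatedly, the mechanism you invoke for the exponent range $(0,\tfrac15)$ is not right: De Giorgi--Nash--Moser theory for integro-differential equations delivers \emph{some} H\"older exponent depending on ellipticity ratios, not a computable one, and it certainly does not explain why the time integral of $\|f\|_{\dot C^\alpha}$ near $t=0$ is finite for this particular range. In the paper the range $\alpha<\tfrac15$ is an artifact of integrating $\tau^{-7\alpha/(1+2\alpha)}$ near $\tau=0$, which in turn traces back to the entropy estimate $\f{d}{dt}\int F\ln F + \|\ln f\|_{\dot H^{1/2}}^2\leq 0$, the identity $\f{d}{dt}\|\ln f\|_{\dot H^{1/2}}^2 + 4\|\sqrt f\|_{\dot H^1}^2 + \int f((-\D)^{1/2}\ln f)^2 = 0$, and the conservation law for $\|\sqrt f\|_{\dot H^1}$; these exploit the multiplicative structure $F=1/f$, $h=\ln f$, and Cotlar's identity in a way that a generic regularity theory cannot replicate. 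Two smaller points: your lower-bound sketch at the maximum of $F$ is morally the same as the paper's argument at the minimum of $f$ and is fine in spirit, but the quantitative gain (the $|\ln f_*|^2$ coefficient that feeds the Riccati comparison with $\coth$) really needs the Cauchy--Schwarz splitting of the principal-value integral that the paper performs; and your mollification-of-$F_0$ approximants require a separate global-well-posedness theorem for bounded-above-and-below data, which the paper avoids by using Fej\'er truncations of $f_0$ and showing (Proposition \ref{prop: band-limited initial data intro}) that band-limited data yield band-limited solutions solving a finite ODE system, making global existence of the approximants immediate.
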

\begin{rmk}\label{rmk: energy equality}
We remark that $L^1(\BT)$ is considered to be the energy class for the initial data $f_0$ (see Remark \ref{rmk: energy class}), while the condition $F_0\in L^1(\BT)$ is natural given \eqref{eqn: equation for big F} (also see Remark \ref{rmk: interpretation of f and F}).
If in addition $F_0\in L^p(\BT)$ for some $p>1$, the energy equality holds up to $t = 0$ for the constructed solution, i.e., for any $t\geq 0$,
\beqo
\|f(\cdot,t)\|_{L^1} + 2\int_0^t \|f(\cdot,\tau)\|_{\dot{H}^{1/2} }^2\,d\tau = \|f_0\|_{L^1}.
\eeqo
\end{rmk}

Note that we do not claim the above properties for all weak solutions, but only for the constructed one.
Regularity of general weak solutions to \eqref{eqn: the main equation} is wildly open.

It will be clear in Section \ref{sec: derivation} that the equation \eqref{eqn: the main equation} and Theorem \ref{thm: main thm} are formulated with a Eulerian perspective.
In Corollary \ref{cor: rephrasing in Lagrangian} in Section \ref{sec: derivation of the model}, we will recast these results in the corresponding Lagrangian coordinate, which is more commonly used in the analysis of the Peskin problem.
Roughly speaking, in the Lagrangian framework, for arbitrary initial data in the energy class satisfying some weak assumptions but no restriction on its size, we are able to well define a global solution to the 2-D tangential Peskin problem with nice properties.
Let us note that, in the existing studies on the Peskin problem with the Lagrangian perspectives, this is considered as a super-critical problem. %, but here we solve it by taking the novel Eulerian point of view.
See more discussions in Section \ref{sec: derivation}.

We also would like to highlight the following very interesting property of \eqref{eqn: the main equation}, which is helpful
when proving Theorem \ref{thm: main thm}.
It states that band-limited initial datum give rise to band-limited solutions, with the band width being uniformly bounded in time.
This allows us to easily construct global solutions from such initial datum.
This property is found by taking the Fourier transform of \eqref{eqn: the main equation}.
We will justify it in Section \ref{sec: band-limited initial data}.

\begin{prop}\label{prop: band-limited initial data intro}
Suppose $f_0>0$ is band-limited, namely, there exists some $K\in\BN$, such that $\CF(f_0)_k = 0$ for all $|k|>K$.
Then \eqref{eqn: the main equation} has a global strong solution $f = f(x,t)$ that is also positive and band-limited, such that $\hat{f}_k(t) = 0$ for all $|k|>K$ and $t\geq 0$.
Such a solution is unique, and it can be determined by solving a finite family of ODEs of its Fourier coefficients.
In fact, we let $\hat{f}_k(t)\equiv 0$ for all $|k|>K$.
Denote $\bar{f}(t):= \CF(f(\cdot,t))_0 = \f{1}{2\pi}\int_\BT f(x,t)\,dx$.
Then for $k\in [0,K]$, $\hat{f}_k(t) = \CF(f(\cdot,t))_k$ solves
\[
\f{d}{dt} \hat{f}_k(t)
=
- k \bar{f}(t)\hat{f}_{k}(t)
- \sum_{j>0} 2(k+2j) \hat{f}_{k+j}(t)\overline{\hat{f}_j(t)},
\quad \hat{f}_k(0) = \CF(f_0)_k,
\]
where $\hat{f}_0(t)$ should be understood as $\bar{f}(t)$.
Finally, let $\hat{f}_k(t) = \overline{\hat{f}_{-k}(t)}$ for $k\in [-K,0)$.
\end{prop}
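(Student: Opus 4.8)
\noindent\emph{Sketch of a proof (proposal).}
The plan is to take the Fourier transform of \eqref{eqn: the main equation}, observe that the ansatz $\hat f_k\equiv 0$ for $|k|>K$ closes into a finite-dimensional quadratic system of ODEs, solve that by Cauchy--Lipschitz, and then use positivity together with a uniform a priori bound read off from the PDE to promote the local ODE solution to a global positive strong solution.

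\textbf{Step 1 (the Fourier system).} Using, for the normalization \eqref{eqn: Fourier transform}, that $\CF(\CH g)_k=-i\,\mathrm{sgn}(k)\hat g_k$ and $\CF((-\D)^{1/2}g)_k=|k|\hat g_k$ (both consistent with \eqref{eqn: Hilbert transform}--\eqref{eqn: fractional Laplacian}), together with $\CF(g')_k=ik\hat g_k$ and $\CF(gh)_k=\sum_{j\in\BZ}\hat g_{k-j}\hat h_j$, I would compute
\beqo
\CF\big(\CH f\cdot\pa_x f-f(-\D)^{1/2}f\big)_k=\sum_{j\in\BZ}\big(\mathrm{sgn}(k-j)-\mathrm{sgn}(j)\big)\,j\,\hat f_{k-j}\hat f_j .
\eeqo
For $k\ge 0$ the only nonvanishing contributions come from $j=k$, $j>k$, and $j<0$; collecting them and substituting $\hat f_{-m}=\overline{\hat f_m}$ (legitimate for the real-valued candidate $f$ we build) produces exactly the system displayed in the statement, with the $k=0$ equation collapsing to $\frac{d}{dt}\hat f_0=-4\sum_{j>0}j|\hat f_j|^2$. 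The structural point I would stress is that in the $k$-th equation the only mode strictly above $k$ that appears is $\hat f_{k+j}$ with $j\ge 1$; hence once one imposes $\hat f_j\equiv 0$ for all $|j|>K$, (i) for $|k|>K$ the right-hand side is identically zero, so the ansatz is consistent with the full infinite system, and (ii) for $0\le k\le K$ the sum over $j$ truncates to $1\le j\le K-k$ and involves only $\hat f_0,\dots,\hat f_K$. Thus the collection $(\hat f_0;\hat f_1,\dots,\hat f_K)$, with $\hat f_0$ real, obeys an autonomous system with polynomial (quadratic) right-hand side and initial data $\hat f_k(0)=\CF(f_0)_k$; Cauchy--Lipschitz yields a unique, real-analytic-in-$t$ solution on a maximal interval $[0,T_{\max})$. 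Setting $\hat f_{-k}(t):=\overline{\hat f_k(t)}$ and $\hat f_k(t)\equiv 0$ for $|k|>K$, one checks that $f(x,t):=\sum_{|k|\le K}\hat f_k(t)e^{ikx}$ is real-valued, $C^\infty$ in $(x,t)$, and --- reversing the Fourier identity above --- a classical solution of \eqref{eqn: the main equation} on $\BT\times[0,T_{\max})$.

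\textbf{Step 2 (positivity and globality).} Since $f_0$ is a trigonometric polynomial with $f_0>0$, it is continuous with $\inf_\BT f_0>0$. I would show $f>0$ on $\BT\times[0,T_{\max})$ by a continuity argument: on the maximal subinterval $[0,t_\sharp)$ on which $f>0$, the reciprocal $F:=1/f$ is $C^\infty$ and, by \eqref{eqn: equation for big F}, solves $\pa_t F=(-\D)^{1/2}f\cdot F+\CH f\cdot\pa_x F$; at a spatial maximum of $F$ --- equivalently a spatial minimum of $f$, where $\pa_x F=0$ and, by \eqref{eqn: fractional Laplacian}, $(-\D)^{1/2}f\le 0$ --- a standard maximum-principle argument for nonlocal parabolic equations gives that $\|F(\cdot,t)\|_{L^\infty}$ is non-increasing, whence $f(\cdot,t)\ge\inf_\BT f_0>0$ on $[0,t_\sharp)$ and, by continuity in $t$, also at $t_\sharp$; maximality then forces $t_\sharp=T_{\max}$. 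With $f>0$ established, every Fourier coefficient is controlled by the mean, $|\hat f_k(t)|\le\frac{1}{2\pi}\int_\BT f(\cdot,t)\,dx=\hat f_0(t)$, while $\frac{d}{dt}\hat f_0\le 0$ gives $\hat f_0(t)\le\hat f_0(0)=\frac{1}{2\pi}\|f_0\|_{L^1}$. Hence the trajectory of the finite ODE system stays in a fixed bounded set, and since its right-hand side is a globally defined polynomial, the ODE continuation criterion forces $T_{\max}=+\infty$. Thus $f$ is a global, positive, band-limited $C^\infty$ solution of \eqref{eqn: the main equation}, i.e.\ a strong solution for all times.

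\textbf{Step 3 (uniqueness in the band-limited class; main obstacle).} If $\tilde f$ is any band-limited strong solution with $\tilde f(\cdot,0)=f_0$, then the coupling structure of Step 1 shows that the subspace of coefficients supported in $|k|\le K$ is invariant, so $\tilde f$ actually has band-width $\le K$; its Fourier coefficients for $0\le k\le K$ are then smooth and satisfy the same closed finite system with the same data, and Cauchy--Lipschitz forces $\widehat{\tilde f}_k=\hat f_k$ there, the remaining modes agreeing by conjugation. The one genuinely non-formal ingredient --- and the step I expect to need the most care --- is the propagation of positivity in Step 2: it must be extracted from the equation itself via the nonlocal maximum principle, since the quadratic ODE system carries no manifest sign structure, and it is precisely positivity that both legitimizes working with $F=1/f$ and upgrades the trivial monotonicity of $\hat f_0$ into a uniform bound on every $\hat f_k$, thereby closing the continuation argument.
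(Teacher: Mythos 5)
Your proposal is correct and follows essentially the same route as the paper's proof: derive the closed finite system of ODEs by taking the Fourier transform of \eqref{eqn: the main equation}, get local existence and uniqueness from Cauchy--Lipschitz, propagate positivity through the nonlocal minimum principle (the paper invokes Lemma \ref{lem: max principle}; you rederive it via $F=1/f$), and close the continuation argument with a uniform a priori bound on the Fourier coefficients. The only cosmetic difference is the choice of that bound --- you use $|\hat f_k(t)|\le \hat f_0(t)\le \hat f_0(0)$ from positivity of $f$ and the decreasing mean, while the paper cites the non-increasing $L^2$-norm of $f$ from Lemma \ref{lem: energy bound and decay of L^p norms} together with Parseval --- and these are interchangeable here.
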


Uniqueness of the weak solution to \eqref{eqn: the main equation} is also an open question given the very weak assumptions on the initial data and the solution.
To achieve positive results on the uniqueness, we introduce a new notion of weak solutions.
We inherit the notations from Definition \ref{def: weak solution}.

\begin{definition}[Dissipative weak solution]
\label{def: dissipative weak solution}
Under the additional assumption $\ln f_0\in H^{\f12}(\BT)$,
a weak solution $f =f(x,t)$ to \eqref{eqn: the main equation} on $\BT\times [0,T)$ (in the sense of Definition \ref{def: weak solution}) is said to be a dissipative weak solution, if it additionally verifies
\begin{enumerate}[(a)]

\item $\ln f\in L^\infty([0,T); H^{1/2}(\BT))$ and $\pa_x\sqrt{f}\in L^2(\BT\times [0,T))$, satisfying that (cf.\;Lemma \ref{lem: H 1/2 estimate for h})
\beqo
\|\ln f(\cdot,t)\|_{\dot{H}^{\f12}}^2
+ 4\int_0^t \int_\BT  \big(\pa_x \sqrt{f} \big)^2\,dx\,dt
\leq
\|\ln f_0\|_{\dot{H}^{\f12}}^2
\eeqo
for any $t\in [0,T)$.

\item
    For any convex function $\Phi\in C^1_{loc}((0,+\infty))$ such that $\Phi(F_0(x))\in L^1(\BT)$, it holds
    \beq
    \int_\BT \Phi(F(x,t))\,dx + \int_0^t \int_\BT \big(\Phi(F)-F\Phi'(F)\big)(-\D)^{\f12}f\,dx\,dt
    \leq \int_\BT \Phi(F_0(x))\,dx
    \label{eqn: entropy type estimate}
    \eeq
    for all $t\in [0,T)$ (cf.\;Remark \ref{rmk: entropy estimate}).
\end{enumerate}
\end{definition}

Then we state uniqueness of the dissipative weak solutions under additional assumptions.

\begin{thm}\label{thm: uniqueness}
Suppose $\ln f_0\in H^{\f12}(\BT)$.
Further assume $f_0\in L^\infty(\BT)$ and $f_0\geq \lam$ almost everywhere with some $\lam>0$.
Then \eqref{eqn: the main equation} admits a unique dissipative weak solution.

In particular, the solution coincides with the one constructed in Theorem \ref{thm: main thm}, and thus it satisfies all the properties there.
\end{thm}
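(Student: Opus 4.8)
\textbf{Proof strategy for Theorem \ref{thm: uniqueness}.} The plan is to show that under the stated hypotheses any dissipative weak solution is uniquely pinned down by $f_0$, in two movements: first promote it to a classical, uniformly non-degenerate solution for positive times, and then run a quantitative stability estimate between two such solutions that can be closed all the way down to $t=0$ using precisely the dissipative inequalities that are built into Definition \ref{def: dissipative weak solution}. This also yields the last sentence of the theorem: under $\ln f_0\in H^{1/2}(\BT)$, $f_0\in L^\infty(\BT)$ and $f_0\geq\lambda$, the solution constructed in Theorem \ref{thm: main thm} is itself a dissipative weak solution — property \ref{property: positivity and smoothness} together with Lemma \ref{lem: H 1/2 estimate for h} and the entropy estimate \eqref{eqn: entropy type estimate} (cf.\;Remark \ref{rmk: entropy estimate}) verify (a) and (b) — so uniqueness forces every dissipative weak solution to coincide with it, hence to inherit all its properties.

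\emph{Step 1: uniform two–sided bounds.} I would first extract from the entropy inequality (b) that $\lambda\leq f(x,t)\leq\|f_0\|_{L^\infty}$ for a.e.\;$(x,t)$, equivalently $\|f_0\|_{L^\infty}^{-1}\leq F\leq\lambda^{-1}$. The point is that for any convex $\Phi\in C^1_{loc}((0,+\infty))$ the function $\psi(f):=\Phi(1/f)-(1/f)\Phi'(1/f)$ is non-decreasing in $f$, so by the C\'ordoba--C\'ordoba / Stroock--Varopoulos inequality $\int_\BT\psi(f)(-\D)^{\f12}f\,dx\geq 0$; hence the dissipation term in \eqref{eqn: entropy type estimate} is non-negative and $\int_\BT\Phi(F(\cdot,t))\,dx\leq\int_\BT\Phi(F_0)\,dx$ for every admissible $\Phi$. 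Applying this to smooth convex approximations of $(F-\lambda^{-1})_+$ and of $(\|f_0\|_{L^\infty}^{-1}-F)_+$, both of which vanish at $F_0$ by hypothesis, gives the claimed bound.

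\emph{Step 2: instant regularization.} With $f$ trapped between two positive constants, \eqref{eqn: the main equation} is a drift–diffusion equation whose order–one fractional diffusion has a bounded, uniformly positive coefficient $f$ and a drift $\CH f$. The a priori smoothing estimates developed earlier in the paper (the De Giorgi--Nash--Moser / Schauder–type bootstrap used to establish property \ref{property: positivity and smoothness} of Theorem \ref{thm: main thm}) apply a priori to any such bounded solution and show it is $C^\infty$ and positive in $\BT\times(0,+\infty)$, a classical solution there, with the quantitative rates of Theorem \ref{thm: main thm}\;(in particular $\int_0^t\|f(\cdot,\tau)\|_{\dot C^{\al}}\,d\tau<\infty$ for $\al<\tfrac15$, as well as $\int_0^t\|f(\cdot,\tau)\|_{\dot H^{1/2}}^2\,d\tau<\infty$ and $\int_0^t\|\pa_x\sqrt f(\cdot,\tau)\|_{L^2}^2\,d\tau<\infty$) controlled by $\|f_0\|_{L^1},\|F_0\|_{L^1},\|f_0\|_{L^\infty},\lambda$.

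\emph{Step 3 and the main obstacle: stability closed from $t=0$.} Given two dissipative weak solutions $f_1,f_2$, set $w:=f_1-f_2$, so that
\[
\pa_t w=\CH f_1\,\pa_x w+\CH w\,\pa_x f_2-f_1(-\D)^{\f12}w-w(-\D)^{\f12}f_2 .
\]
Testing against $w$: the transport term gives $\int_\BT\CH f_1\,\pa_x w\cdot w\,dx=-\tfrac12\int_\BT(-\D)^{\f12}f_1\cdot w^2\,dx$, estimated via $\int_\BT(-\D)^{\f14}f_1\,(-\D)^{\f14}(w^2)\,dx$ by $\|f_1\|_{\dot H^{1/2}}\|w\|_{L^\infty}\|w\|_{\dot H^{1/2}}$; the term $\int_\BT\CH w\,\pa_x f_2\cdot w\,dx$ by $\|\pa_x f_2\|_{L^\infty}\|w\|_{L^2}^2$; the diffusion term is coercive, since symmetrizing the double integral yields
\[
\int_\BT f_1\,w\,(-\D)^{\f12}w\,dx\ \geq\ c\lambda\,\|w\|_{\dot H^{1/2}}^2\ -\ C\lambda^{-1}\|f_1\|_{\dot H^{1/2}}^2\,\|w\|_{L^\infty}^2 ,
\]
and finally $-\int_\BT w^2(-\D)^{\f12}f_2\,dx\leq\|(-\D)^{\f12}f_2\|_{L^\infty}\|w\|_{L^2}^2$. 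Absorbing the $\|w\|_{\dot H^{1/2}}^2$ contributions into the coercive term and bounding $\|w\|_{L^\infty}$ by interpolation, one reaches $\tfrac{d}{dt}\|w(\cdot,t)\|_{L^2}^2\leq C(t)\|w(\cdot,t)\|_{L^2}^2$. The genuine difficulty is that the classical norms entering $C(t)$ blow up as $t\to0^+$, so the naïve Gr\"onwall factor $\exp\!\big(\int_0^t C\big)$ may diverge and one cannot simply send the base point $\tau\downarrow0$ in $\|w(\cdot,t)\|_{L^2}^2\leq\|w(\cdot,\tau)\|_{L^2}^2\exp\!\big(\int_\tau^t C\big)$. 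This is exactly where the dissipative hypotheses are used: reorganizing the estimate so that the dangerous factors are paired with $\|\pa_x\sqrt{f_1}\|_{L^2}^2+\|\pa_x\sqrt{f_2}\|_{L^2}^2$ and with $\|\ln f_1\|_{\dot H^{1/2}}^2+\|\ln f_2\|_{\dot H^{1/2}}^2$ — each having, by (a), a \emph{finite} time-integral bounded in terms of $\|\ln f_0\|_{\dot H^{1/2}}$ — rather than with a pointwise-in-time norm, makes the exponent finite; combined with the quantitative modulus for $\|F_i(\cdot,\tau)-F_0\|$ supplied by (b) (via the weak formulation \eqref{eqn: weak formulation} and the bounds of Step 1, which force $F_i(\cdot,\tau)\to F_0$ strongly), this closes the estimate directly from $t=0$ and gives $w\equiv0$, i.e.\;$f_1\equiv f_2$. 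An alternative is a relative-entropy / modulated-energy argument for the conservation law \eqref{eqn: equation for big F} satisfied by $F=1/f$, which avoids some of the Hilbert-transform product estimates at the price of checking coercivity of the modulated dissipation; I expect the $t\to0$ matching and the verification of the displayed coercivity inequality with constants depending only on $\lambda$ to be the main technical work either way.
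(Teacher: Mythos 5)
Your Step 1 (uniform two-sided bounds from the entropy inequality (b) via suitable convex $\Phi$'s) is exactly Lemma \ref{lem: max principle of dissipative weak solution}, and it is carried out correctly. But your primary strategy — Steps 2 and 3 — has two serious gaps, and the route that actually works is the one you wave at in your final sentence as an ``alternative.''

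\textbf{The gap in Step 2.} You assert that a general bounded dissipative weak solution is instantly $C^\infty$ for $t>0$ because one can ``apply'' the smoothing estimates proved earlier. This does not follow: Lemmas \ref{lem: lower bound}--\ref{lem: L inf bound}, Lemma \ref{lem: generalized H k+1/2 estimate for h}, Proposition \ref{prop: boundedness and decay of H^k norms of h}, etc., are a priori estimates for positive \emph{strong} (in fact smooth) solutions; they are not a regularity theorem for weak solutions, and no De Giorgi–Nash–Moser theory for this degenerate nonlinear nonlocal equation is established in the paper. The paper never proves that an arbitrary dissipative weak solution regularizes. Instead it exploits an asymmetry: it compares an arbitrary dissipative weak solution $f_1$ with the \emph{constructed} solution $f_2$, which is known (by construction) to be smooth for $t>0$. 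Only $f_2$ needs to be smooth — $\ln F_2$ is used as a test function against the weak formulation of $F_1$, and the a priori bounds are invoked only on the $f_2$ side. If you insist that both $f_1$ and $f_2$ are smooth, you owe the reader a proof.

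\textbf{The gap in Step 3.} Even granting Step 2, the differential inequality $\tfrac{d}{dt}\|w\|_{L^2}^2\le C(t)\|w\|_{L^2}^2$ you sketch has $C(t)$ containing $\|\pa_x f_2\|_{L^\infty}$ and $\|(-\D)^{1/2}f_2\|_{L^\infty}$, which near $t=0$ blow up faster than $t^{-1}$ (cf.\ \eqref{eqn: bound for H1 norm} and the bootstrap behind Proposition \ref{prop: boundedness and decay of H^k norms of h}); $\int_0^t C(\tau)\,d\tau=\infty$ and Gr\"onwall from $t=0$ is unavailable. You note this difficulty and propose to ``reorganize'' so the bad factors pair with $\|\pa_x\sqrt{f_i}\|_{L^2}^2$, but you do not exhibit such a reorganization for the $L^2$-difference $w=f_1-f_2$, and it is not at all clear one exists. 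The paper achieves precisely that reorganization, but only through the relative entropy $\int_\BT F_1\ln(F_1/F_2)\,dx$: after testing the weak formulation of $F_1$ against $\rho_\d(t)\ln F_2$ and adding the entropy inequality (b) for $F_1$ with $\Phi(y)=y\ln y$, the only potentially singular factor that appears is $\f{\pa_x F_2}{F_2}=-2F_2^{1/2}\pa_x\sqrt{f_2}$, which lands in $L^2_tL^2_x$ by Definition \ref{def: dissipative weak solution}(a). After symmetrizing the $(f_1-f_2)(-\D)^{1/2}(F_1-F_2)$ term to extract the coercive piece $\|F_0\|_{L^\infty}^{-2}\|F_1-F_2\|_{L^2_t\dot H^{1/2}}^2$, and using a Pinsker-type inequality (valid here because $F_1,F_2$ are trapped between two positive constants by Step 1) to control $\|F_1-F_2\|_{L^2}^2$ by the relative entropy, one gets a closed estimate whose smallness parameter is $\int_0^{t_*}\big(\|\pa_x\sqrt{f_1}\|_{L^2}^2+\|\pa_x\sqrt{f_2}\|_{L^2}^2\big)\,dt$, which can be made $<1$ by shrinking $t_*$. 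That is the mechanism that closes uniqueness from $t=0$ without any regularity on $f_1$ — your ``alternative'' is in fact the main road, and your main road, as written, does not close.

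To salvage your write-up, I would discard Step 2 entirely, keep Step 1, and replace Step 3 by the relative entropy computation: test the conservation law for $F_1$ against $\ln F_2$, pair with (b) applied to $F_1$, symmetrize to extract $\|F_1-F_2\|_{\dot H^{1/2}}^2$, Pinsker, then Gr\"onwall with the $L^2_tL^2_x$ bound on $\pa_x\sqrt{f_i}$ from (a) as the small constant.
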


\subsection{Related studies}
\label{sec: related studies}
The equation \eqref{eqn: the main equation} is derived from a special setting of the 2-D Peskin problem, but we choose to review related works in that direction in Section \ref{sec: 2D Peskin}.
Beyond that, \eqref{eqn: the main equation} is reminiscent of many classic PDEs arising in fluid dynamics and other subjects.
We list a few, but we note that the list of equations and literature below is by no means exhaustive.

\begin{enumerate}[(I)]
  \item
In \cite{cordoba2005formation}, C\'{o}rdoba, C\'{o}rdoba, and Fontelos (CCF) studied
\[
\pa_t \th - \CH \th\cdot \pa_x \th = -\nu (-\D)^{\f{\alpha}{2}}\th
\]
on $\BR$.
They showed finite-time blow-up for the inviscid case $\nu = 0$, and yet well-posedness when $\nu>0$ and $\alpha\geq 1$, with smallness of initial data assumed for the critical case $\alpha = 1$.
%Those well-posedness results were improved in \cite{}.
See \cite{dong2008well,li2008blowup,silvestre2016transport,lazar2016nonlocal} for related results.

When $\nu>0$, the CCF model is known as a 1-D analogue of the dissipative quasi-geostrophic equation in $\BR^2$
\[
\pa_t \th + u\cdot \na \th =- (-\D)^{\f{\al}{2}}\th,\quad u = (-\CR_2\th,\CR_1\th).
\]
Here $\CR_1$ and $\CR_2$ denote the Riesz transforms on $\BR^2$.
The case $\alpha = 1$ is again critical, for which global well-posedness results are established in e.g.\;\cite{caffarelli2010drift,constantin2001critical,kiselev2007global}; also see \cite{dong2010dissipative}.

Compared with the CCF model with $\nu>0$ and $\al = 1$, our equation \eqref{eqn: the main equation} features a fractional diffusion with variable coefficient.
The diffusion is enhanced when $f$ is large, while it becomes degenerate when $f$ is almost zero.
The latter turns out to be one of the main difficulties in the analysis of \eqref{eqn: the main equation}.
%In spite of such degeneracy, we can show that, under the assumptions on the initial data in Theorem \ref{thm: main thm}, the solution enjoys several a priori smoothing estimates;
See Lemma \ref{lem: lower bound} and Section \ref{sec: estimates for h} for how we handle such possible degeneracy.

\item The equation
\beq
\pa_t \rho + \pa_x\big( \rho \CH \rho\big) = 0,
\label{eqn: classic equation with a different sign}
\eeq
or equivalently (at least for nice solutions $\rho = \r(x,t)$),
\beq
\pa_t \rho = -\pa_x \rho \cdot \CH \rho - \rho (-\D)^{\f12}\rho,
\label{eqn: classic equation with a different sign expanded}
\eeq
has been studied extensively in several different contexts, such as fluid dynamics \cite{baker1996analytic,chae2005finite,castro2008global}, and dislocation theory \cite{head1972dislocation1, head1972dislocation2, head1972dislocation3, deslippe2004dynamic, biler2010nonlinear}.
Interestingly, \eqref{eqn: classic equation with a different sign} has similarities with both \eqref{eqn: the main equation} and \eqref{eqn: equation for big F}.
Indeed, \eqref{eqn: classic equation with a different sign expanded} differs from \eqref{eqn: the main equation} only in the sign of the drift term.
On the other hand, both \eqref{eqn: equation for big F} and \eqref{eqn: classic equation with a different sign} are conservation laws;
the transporting velocity in \eqref{eqn: classic equation with a different sign} is $\CH \r$, while it is $-\CH (\f{1}{F})$ in \eqref{eqn: equation for big F}.
It is not immediately clear how such differences lead to distinct features of these equations as well as different difficulties in their analysis, but apparently, it is less straightforward to propose and study the weak formulation of \eqref{eqn: the main equation} than that of \eqref{eqn: classic equation with a different sign}.

A generalization of \eqref{eqn: classic equation with a different sign} reads
\[
\pa_t \rho + \th \pa_x\big( \rho \CH \rho\big)+(1-\th)\CH \r \cdot \pa_x \r = 0
\]
where $\th\in [0,1]$ is a constant \cite{morlet1998further}.
Formally, \eqref{eqn: the main equation} corresponds to the case $\th = -1$, up to a change of variable $\r = -f$, which is not covered by existing works to the best of our knowledge.

Another generalization of \eqref{eqn: classic equation with a different sign} %that shows up in many different problems
is %an equation of the form
\[
\pa_t \r - \di\big(\r \na (-\D)^{-s}\r\big) = 0.
\]
The case $s = 1$ arises in the hydrodynamics of vortices in the
Ginzburg-Landau theory \cite{weinan1994dynamics}; its well-posedness was proved in \cite{lin2000hydrodynamic}.
When $s\in (0,1)$, it was introduced as a nonlinear porous medium equation
with fractional potential pressure \cite{caffarelli2011nonlinear,caffarelli2011asymptotic,caffarelli2013regularity,caffarelli2016regularity}. Its well-posedness, long-time asymptotics, regularity of the solutions, as well as other properties has been investigated.
If we set the equation in 1-D and denote $\al = 2(1-s)$, then it can be written as
\[
\pa_t \rho + \big(\pa_x^{-1}(-\D)^{\f{\al}{2}}\rho\big) \pa_x \rho  = -\rho (-\D)^{\f{\al}{2}} \rho.
\]
This turns out to be a special case of the 1-D Euler alignment system %;
%see e.g.~%Section 1 and in particular Eqn.~(1.37) of
 for studying flocking dynamics \cite{do2018global, %}, and also \cite{
shvydkoy2017eulerian,shvydkoy2018eulerian,tan2019singularity}.
Clearly, \eqref{eqn: classic equation with a different sign} corresponds to the case $s = \f12$, or equivalently, $\alpha = 1$ in 1-D.

\item It was derived in \cite{steinerberger2019nonlocal} that the dynamics of real roots of a high-degree polynomial under continuous differentiation can be described by
\[
\pa_t u + \pa_x \left(\arctan \left(\f{\CH u}{u}\right)\right) = 0,
\]
or equivalently (at least for nice solutions $u = u(x,t)$),
\beq
\pa_t u = \f{\CH u\cdot \pa_x u - u(-\D)^{\f12} u}{u^2+(\CH u )^2}.
\label{eqn: eqn for dynamics of roots}
\eeq
Also see \cite{shlyakhtenko2020fractional,steinerberger2020free}.
Recently, its well-posedness was studied in \cite{alazard2022dynamics,granero2020nonlocal,kiselev2022global,kiselev2020flow}.
Although \eqref{eqn: eqn for dynamics of roots} looks similar to \eqref{eqn: the main equation} if one ignores the denominator of the right-hand side, its solutions can behave differently from those to \eqref{eqn: the main equation} because of very different form of nonlinearity and degeneracy.
Even when $u$ is sufficiently close to a constant $\bar{u}$, \cite{granero2020nonlocal} formally derives that $u-\bar{u}$ should solve
\[
\pa_t v  = -(-\D)^{\f12}v + \pa_x (v\CH v)
\]
up to suitable scaling factors and higher-order errors.
This equation becomes \eqref{eqn: classic equation with a different sign} formally after a change of variable $\rho = 1-v$.
%See e.g.\;\cite{chae2005finite} for the analysis of this equation.

\item Letting $\om := -f$, we may rewrite \eqref{eqn: the main equation} as
\beq
\pa_t \om +u \pa_x \om = \om \pa_x u,\quad
u = \CH \om.
\label{eqn: recast as De Gregorio}
\eeq
Then it takes a similar form as the De Gregorio model \cite{de1990one,de1996partial},
\beq
\pa_t \om +u \pa_x \om = \om \pa_x u,\quad
u_x = \CH \om.
\label{eqn: De Gregorio}
\eeq
The latter is known as a 1-D model for the vorticity equation of the 3-D Euler equation, playing an important role in understanding possible blow-ups in the 3-D Euler equation as well as many other PDEs in fluid dynamics.
Its analytic properties are quite different from \eqref{eqn: the main equation}; see e.g.\;\cite{castro2010infinite,chen2021finite,elgindi2020effects,jia2019gregorio,lei2020constantin,okamoto2008generalization,Huang2022OnSF} and the references therein.
Nevertheless, we refer the readers to Lemma \ref{lem: H 1 estimate for sqrt f} below, which exhibits an interesting connection of these two models due to their similar algebraic forms.
\end{enumerate}

\subsection{Organization of the paper}
In Section \ref{sec: derivation}, we review the Peskin problem in 2-D and derive \eqref{eqn: the main equation} formally in a special setting of it with a new Eulerian perspective.
We highlight Corollary \ref{cor: rephrasing in Lagrangian} in Section \ref{sec: derivation of the model}, which recasts the results in Theorem \ref{thm: main thm} in the Lagrangian formulation.
Sections \ref{sec: a priori estimates}-\ref{sec: smoothing and decay} establish a series of a priori estimates for nice solutions.
We start with some basic estimates for \eqref{eqn: the main equation} in Section \ref{sec: a priori estimates}, and then prove in Section \ref{sec: positivity and boundedness} that smooth solutions to \eqref{eqn: the main equation} admit finite and positive a priori upper and lower bounds at positive times. %, where the bounds only depend on $\|f_0\|_{L^1}$ and $\|F_0\|_{L^1}$.
In order to make sense of the weak formulation, we prove in Section \ref{sec: estimates for h} that certain H\"{o}lder norms of $f$ are time-integrable near $t=0$. %, we can show that certain H\"{o}lder norm of $f$ is time-integrable near $t=0$.
Section \ref{sec: global higher-order estimates} further shows that the solutions would become smooth at all positive times, and they converge to a constant equilibrium exponentially as $t\to +\infty$.
Section \ref{sec: continuity at initial time} addresses the issue of the time continuity of the solutions at $t = 0$.
In Section \ref{sec: global well-posedness}, we prove
existence of the desired global weak solutions, first for band-limited and positive initial data in Section \ref{sec: band-limited initial data}, and then for general initial data in Section \ref{sec: existence for general initial data}.
Corollary \ref{cor: rephrasing in Lagrangian} will be justified there as well.
Section \ref{sec: uniqueness} proves Theorem \ref{thm: uniqueness} on the uniqueness.
We show in Section \ref{sec: analyticity and sharp decay estimate} that the solution becomes analytic when $t$ is sufficiently large, and we also establishes sharp rate of exponential decay of the solution.
Finally, in Appendix \ref{sec: H-1/2 estimate for F}, we discuss an $H^{-\f12}$-estimate for $F$ that is of independent interest.

\subsection*{Acknowledgement}
The author is supported by the National Key R\&D Program of China under the grant 2021YFA1001500.
The author would like to thank Dongyi Wei, Zhenfu Wang, De Huang, and Fanghua Lin for helpful discussions.

\section{The Tangential Peskin Problem in 2-D}
\label{sec: derivation}

\subsection{The 2-D Peskin problem}
\label{sec: 2D Peskin}
The 2-D Peskin problem, also known as the Stokes immersed boundary problem in 2-D, describes a 1-D closed elastic string immersed and moving in a 2-D Stokes flow induced by itself.
Let the string be parameterized by $X = X(s,t)$, where $s\in \BT$ is the Lagrangian coordinate (as opposed to the arclength parameter).
Then the 2-D Peskin problem is given by
\begin{align}
&\;-\Delta u+ \nabla p = \int_{\mathbb{T}}F_X(s,t)\delta(x-X(s,t))\,ds,\label{eqn: Stokes equation}\\
&\;\mathrm{div}\, u = 0,\quad |u|,|p|\rightarrow 0\mbox{ as }|x|\rightarrow \infty,\label{eqn: incompressible condition}\\
%&\;f(x,t) = ,\label{eqn: elastic forcing}\\
&\;\frac{\partial X}{\partial t}(s,t) = u(X(s,t),t),\quad%\label{eqn: motion of the membrane}\\
X(s,0) = X_0(s).\label{eqn: motion of the membrane and initial configuration}
\end{align}
Here $u = u(x,t)$ and $p = p(x,t)$ denote the flow field and pressure in $\BR^2$, respectively.
The right-hand side of \eqref{eqn: Stokes equation}
%$f(x,t)$
is the singular elastic force applied to the fluid that is only supported along the string. %, defined by \eqref{eqn: elastic forcing}.
$F_X$ is the elastic force density in the Lagrangian coordinate, determined by the string configuration $X$.
In general, it is given by \cite{peskin2002immersed}
\begin{equation*}
F_X(s,t) = \partial_s\left(\mathcal{T}(|X'(s,t)|, s,t)\frac{X'(s,t)}{|X'(s,t)|}\right).
%\label{eqn: Lagrangian representation of the elastic force}
\end{equation*}
Here and in what follows, we write $\partial_s X(s,t)$ as $X'(s,t)$ for brevity.
$\mathcal{T}$ denotes the tension along the string.
In the simple case of Hookean elasticity, %$\CE(p) = \f{k_0}2 p^2$,
%Then
$\mathcal{T}(|X'(s,t)|,s,t) = k_0 |X'(s,t)|$, where $k_0>0$ is the Hooke's constant, and thus $F_X(s,t) = k_0X''(s,t)$.

The Peskin problem is closely related to the well-known numerical immersed boundary method \cite{peskin1972flow,peskin2002immersed}.
As a model problem of fluid-structure interaction, it has been extensively studied and applied in numerical analysis for decades.
Nonetheless, analytical studies of it started only recently.
In the independent works \cite{lin2019solvability} and \cite{mori2019well},
the authors first studied well-posedness of the problem with $F_{X} = X''(s,t)$.
Using the 2-D Stokeslet
\beq
G_{ij}(x) = \f{1}{4 \pi}\left(-\ln |x|\d_{ij} + \f{x_i x_j}{|x|^2}\right),\quad i,j = 1,2,
\label{eqn: Stokeslet in 2-D}
\eeq
the problem was reformulated into a contour dynamic equation in the Lagrangian coordinate
\beq
%\begin{split}
\pa_t X(s,t) = %&\;
\int_\BT G(X(s,t)-X(s',t)) X''(s',t)\,ds'.
\label{eqn: contour dynamic equation}
\eeq
%in terms of the string dynamics using  only (see \eqref{eqn: contour dynamic equation} below),
%We omitted the time dependence for conciseness.
Based on this, well-posedness results of the string evolution were proved in \cite{lin2019solvability} and \cite{mori2019well} in subcritical spaces $H^{\f52}(\BT)$ and $C^{1,\alpha}(\BT)$, respectively, under the so called \emph{well-stretched condition} on the initial configuration ---
a string configuration $Y(s)$ is said to satisfy the well-stretched condition  with constant $\lam>0$, if for any $s_1,s_2\in \BT$,
\beq
\big|Y(s_1)-Y(s_2)\big|\geq \lam |s_1-s_2|.
\label{eqn: well-stretched condition}
\eeq
It is worthwhile to mention that \cite{mori2019well} also established a blow-up criterion, stating that if a solution fails at a finite time, then either $\|X(\cdot,t)\|_{\dot{C}^{1,\alpha}}$ goes to infinity, or the best well-stretched constant shrinks to zero.

Since then, many efforts have been made to establish well-posedness under the scaling-critical regularity --- in this problem, it refers to $W^{1,\infty}(\BT)$ and other regularity classes with the same scaling.
Garc\'{i}a-Ju\'{a}rez, Mori, and Strain considered the case where the fluids in the interior and the exterior of the string can have different viscosities, and they showed global well-posedness for initial data of medium size in the critical Wiener space $\CF^{1,1}(\BT)$ \cite{garcia2020peskin}.
In \cite{gancedo2020global}, Gancedo, Granero-Belinch\'{o}n, and Scrobogna introduced a toy scalar model that captures the string motion in the normal direction only, for which they proved global well-posedness for small initial data in the critical Lipschitz class $W^{1,\infty}(\BT)$.
Recently, Chen and Nguyen established well-posedness of the original 2-D Peskin problem in the critical space $\dot{B}_{\infty,\infty}^{1}(\BT)$ \cite{chen2021peskin}.
However, in the Hookean case, the total energy of the system %\eqref{eqn: Stokes equation}-\eqref{eqn: motion of the membrane and initial configuration} (or \eqref{eqn: contour dynamic equation})
is $\f12 \|X(\cdot,t)\|_{\dot{H}^1}^2$.
Hence, the Cauchy problem with the initial data belonging to the energy class is considered to be super-critical.

In addition to these studies, the author introduced a regularized Peskin problem in \cite{tong2021regularized}, and proved its well-posedness and its convergence to the original singular problem as the regularization diminishes.
Aiming at handling general elasticity laws beyond the Hookean elasticity, \cite{rodenberg20182d} and the very recent work \cite{cameron2021critical} extended the analysis to a fully nonlinear Peskin problem.
Readers are also referred to \cite{li2021stability} for a study on the case where the elastic string has both stretching and bending energy.

\subsection{Derivation of the tangential Peskin problem in 2-D}
\label{sec: derivation of the model}
In this subsection, we will present a formal derivation of the tangential Peskin problem in 2-D, without paying too much attention to regularity and integrability issues.
It is not the main point of this paper to rigorously justify the derivation, although it clearly holds for nice solutions.

Consider an infinitely long and straight Hookean elastic string lying along the $x$-axis, which admits tangential deformation in the horizontal direction only.
By \eqref{eqn: contour dynamic equation}, this feature gets preserved for all time in the flow induced by itself.
%This gives an exact model in 1-D regarding tangential displacement only.
With abuse of notations, we still use $X(s,t)$ to denote the actual physical position (i.e., the $x$-coordinate) of the string point with the Lagrangian label $s\in \BR$; note that $X(\cdot,t)$ is now a scalar function.
Assume $X: s\mapsto X(s,t)$ to be suitably smooth and strictly increasing, so that it is a bijection from $\BR$ to $\BR$.

Under the assumption of no transversal deformation, \eqref{eqn: contour dynamic equation} reduces to (thanks to \eqref{eqn: Stokeslet in 2-D})
\beq
\pa_t X(s,t) =
\f{1}{4 \pi} \int_\BR \left(-\ln |X(s,t)-X(s',t)|  + 1\right) X''(s',t)\,ds'.
\label{eqn: contour dynamic equation 1D prelim}
\eeq
By integration by parts,
\beq
\pa_t X(s)
=  -\f{1}{4\pi} \mathrm{p.v.}\int_\BR \f{ X'(s')^2}{X(s)-X(s')} \,ds'
= -\f{1}{4\pi} \mathrm{p.v.}\int_\BR \f{X'(s')}{X(s)-X(s')} \,dX(s').
\label{eqn: contour dynamic equation 1D}
\eeq
Here we omitted the $t$-dependence for conciseness.

We define $f = f(x,t)$ such that
\beq
f(X(s,t),t) = X'(s,t). % whenever $x = X(s)$.
\label{eqn: def of f}
\eeq
Physically, $f(x,t)$ represents the extent of local stretching of the elastic string at the spatial position $x$.
Then \eqref{eqn: contour dynamic equation 1D} becomes
\beq
\begin{split}
\pa_t X(s,t)
= &\; -\f{1}{4\pi} \mathrm{p.v.}\int_\BR \f{f(X(s',t),t)}{X(s,t)-X(s',t)} \,dX(s',t)\\
= &\; -\f{1}{4\pi} \mathrm{p.v.}\int_\BR \f{f(y,t)}{X(s,t)-y} \,dy = -\f14 \CH f(\cdot,t)\big|_{x = X(s,t)}.
\end{split}
\label{eqn: transporting velocity for particles}
\eeq
Here $\CH$ denotes the Hilbert transform on $\BR$.
Differentiating \eqref{eqn: transporting velocity for particles} with respect to $s$ yields
\beqo
\pa_t \big(X'(s,t)\big)
= -\f14 (-\D)^{\f12} f(\cdot,t)\big|_{x = X(s,t)}\cdot X'(s,t).
\eeqo
Using \eqref{eqn: def of f}, this can be further recast as
\beqo
\pa_t f(X(s,t),t) + \pa_t X(s,t)\cdot \pa_x f(X(s,t),t)
= -f(X(s,t),t)\cdot \f14(-\D)^{\f12}f(\cdot,t)\big|_{x=X(s,t)}.
\eeqo
Using \eqref{eqn: transporting velocity for particles} again on the left-hand side and letting $x = X(s,t)$, we obtain a scalar equation on $\BR$ for $f = f(x,t)$ %that will be studied in the rest of the paper
\beq
\pa_t f
= \f{1}{4}\left( \CH f \cdot \pa_x f - f (-\D)^{\f12} f \right).
\label{eqn: equation for f}
\eeq
Now assuming $f(x,t)$ to be spatially $2\pi$-periodic for all time, and discarding the coefficient $\f14$ by a change of the time variable, we obtain \eqref{eqn: the main equation} on $\BT$.
Note that, in this periodic setting, the Hilbert transform and the fractional Laplacian in \eqref{eqn: the main equation} should be re-interpreted as those on $\BT$ (see \eqref{eqn: Hilbert transform} and \eqref{eqn: fractional Laplacian} for their definitions).

We introduce this special setting of the 2-D Peskin problem, mainly motivated by the interest of understanding global behavior of the 2-D Peskin problem with general initial data, which is a challenging problem.
Given the blow-up criterion in \cite{mori2019well} mentioned above, %states that solutions to the 2-D Peskin problem develop finite-time singularity only when $\|X(\cdot,t)\|_{\dot{C}^{1,\alpha}}$ goes to infinity, or when the best well-stretched constant shrinks to zero.
it is natural to ask whether finite-time singularity would occur in some typical scenarios.
For example, if a local string segment is smooth at the initial time, and if it is known to ``never gets close to other parts of the string", one may ask whether or not it would spontaneously develop finite-time singularity, such as loss of regularity and aggregation of Lagrangian points, etc.
The tangential Peskin problem proposed here can be the first step in this direction.

We state a few remarks on \eqref{eqn: equation for f} and the above derivation.

\begin{rmk}
\eqref{eqn: equation for f} provides the exact string dynamics in the special setting considered here, as no approximation is made in the derivation.
This is in contrast to the scalar model in \cite{gancedo2020global}, where the tangential deformation of the string should have made a difference to the evolution but gets ignored purposefully.

%To conclude this section, let us make an additional remark on this new tangential model \eqref{eqn: contour dynamic equation 1D} (or equivalently \eqref{eqn: equation for f}).

Although the tangential problem \eqref{eqn: equation for f} (or equivalently, \eqref{eqn: contour dynamic equation 1D}) is derived in a simplified geometric setting, different from that of the original 2-D Peskin problem \eqref{eqn: contour dynamic equation} where the string is a closed curve,
%
%the case of an infinitely long and straight elastic string and it
%may be treated as a toy model,
we will reveal a surprising connection between the two problems in a forthcoming paper \cite{Tong2023Geometric}.
We prove that, in the original 2-D Peskin problem, if the closed elastic string initially exhibits a perfectly circular shape, then it should remain to be a circle of the same radius for all time, while the center of the circle can move.
After normalizing the position of the circle center, we find that the tangential deformation along the circle is described \emph{exactly} by the tangential model.
This allows us to fully characterize the global dynamics of such circular configurations under rather weak assumptions (also see Corollary \ref{cor: rephrasing in Lagrangian} below).
Beyond that, analysis in this paper also provides useful guidance for studying the original 2-D Peskin problem there with more general initial data.
\end{rmk}

\begin{rmk}\label{rmk: interpretation of f and F}
Given the physical interpretation of $f$, $F(x,t): = 1/f(x,t)$ represents the spatial density of Lagrangian material points along the elastic string.
Thus, there should be no surprise that $F$ satisfies a conservation law \eqref{eqn: equation for big F}, and it is natural to assume $F$ to have $L^1$-regularity at least.
\end{rmk}

\begin{rmk}
\label{rmk: scaling}
If $f = f(x,t)$ is a sufficiently smooth solution to \eqref{eqn: equation for f} on $\BR$ with initial data $f(x,0) = f_0(x)$, then for any $\lambda >0$,
\[
\lam f(x,\lam t), \; f(\lam x,\lam t),\; \lam f\left(\f{ x}{\lam}, t\right)
\]
are all solutions, corresponding to initial datum $\lam f_0(x)$, $f_0(\lam x)$, and $\lam f_0(x/\lam)$, respectively.
\end{rmk}

\begin{rmk}
\label{rmk: remark on the periodicity}
We imposed periodicity in \eqref{eqn: equation for f} mainly for convenience of analysis.
Many results in this paper should still hold in the case of $\BR$, given suitable decay conditions at the spatial infinity.
In that case, one has to pay extra attention to integrability issues from time to time.
We thus choose to work on $\BT$ to avoid such technicality.

If the spatial periodicity is to be imposed, it is enough to assume the $2\pi$-periodicity, as one can always rescale in space and time.
Also note that the $2\pi$-periodicity is imposed in the Eulerian coordinate.
In the Lagrangian coordinate, $X'(\cdot,t)$ is periodic as well given the monotonicity and bijectivity assumptions of $X(\cdot,t)$, but the period is $\f{2\pi}{f_\infty}$, with $f_\infty$ defined in \eqref{eqn: equilibrium}.
Indeed,
\[
X^{-1}(x+2\pi,t)-X^{-1}(x,t)= \int_{X^{-1}(x,t)}^{X^{-1}(x+2\pi,t)} \f{1}{f(X(s,t),t)}\,dX(s,t) = \int_{x}^{x+2\pi} \f{1}{f(y,t)}\,dy.
\]
Observe that the last integral is a constant independent of $x$ and $t$ thanks to the periodicity of $f$ and the conservation law \eqref{eqn: equation for big F} of $\f1f$.
\end{rmk}

\begin{rmk}
One may consider general elasticity laws.
Assume the tension takes the simple form $\CT = \CT(|X'(s,t)|)$, with $\CT(\cdot)$ satisfying suitable assumptions.
Then we can follow the above argument to obtain (again with the coefficient $\f14$ discarded)
\beq
\pa_t f
= \CH \big(\CT(f)\big) \cdot \pa_x f - f (-\D)^{\f12} \CT(f).
\label{eqn: equation for f general tension}
\eeq
Here $f$ is still defined by \eqref{eqn: def of f}.
In this case, $F:= \f1{f}$ solves
\beq
\pa_t F = \pa_x\big(\CH [\CT(f)] \cdot F\big).
\label{eqn: equation for big F general tension}
\eeq

Suppose $\CT:[0,+\infty]\to [0,+\infty]$ is sufficiently smooth and strictly increasing.
Define $g(x,t):= \CT(f(x,t))$.
Then \eqref{eqn: equation for f general tension} can be written as
\beq
\pa_t g
= \CH g \cdot \pa_x g - \CN(g) (-\D)^{\f12} g.
\label{eqn: the tension equation}
\eeq
Here $\CN$ is a function that sends $\CT(f)$ to $\CT'(f)f$, i.e.,
\[
\CN(g) := \CT'\big(\CT^{-1}(g)\big)\cdot \CT^{-1}(g).
\]
If $\CT(f) = f^\g$ with $\g>0$, then $\CN(g) = \g g$.
% and thus  becomes
%\beqo
%\pa_t g
%= \CH g \cdot \pa_x g - \g g (-\D)^{\f12} g,
%\eeqo
Denoting $\om: = -\g g$, we may write \eqref{eqn: the tension equation} as
\[
\pa_t \om +\g^{-1} u \pa_x \om = \om \pa_x u,\quad
u = \CH \om.
\]
This provides a formal analogue of Okamoto-Sakajo-Wunsch generalization \cite{okamoto2008generalization} of the De Gregorio model (also see Section \ref{sec: related studies}).
\end{rmk}

Most of, if not all, the previous analytical studies of the 2-D Peskin problem are based on the Lagrangian formulations, such as \eqref{eqn: contour dynamic equation}, \eqref{eqn: contour dynamic equation 1D prelim}, and \eqref{eqn: contour dynamic equation 1D},
whereas \eqref{eqn: equation for f} (or equivalently \eqref{eqn: the main equation}) is formulated in the Eulerian coordinate.
We will thus recast our main result Theorem \ref{thm: main thm} in the Lagrangian coordinate.
Even though it is concerned with a special setting of the 2-D Peskin problem, compared with the existing results, it significantly weakens the assumption on the initial data for which a global solution can be well-defined.
%We also direct the readers to the remark at the end of this section.

Recall that $X = X(s,t)$ represents the evolving configuration of the infinitely long elastic string lying along the $x$-axis.
It is assumed to be periodic in space.
It is supposed to solve \eqref{eqn: contour dynamic equation 1D} in the Lagrangian coordinate with the initial condition $X(s,0) = X_0(s)$.
Regarding the existence of such an $X(s,t)$, we have the following result, which will be proved in Section \ref{sec: existence for general initial data}.

\begin{cor}
\label{cor: rephrasing in Lagrangian}
Suppose $X_0:s\mapsto X_0(s)\in \BR$ is a function defined on $\BR$, satisfying that
\begin{enumerate}[(i)]
  \item \label{assumption: periodicity}
  $X_0(s)$ is strictly increasing in $s$, and $X_0(s+2\pi) = X_0(s)+2\pi$ for any $s\in \BR$;
  \item \label{assumption: energy class}
  When restricted on $[-\pi,\pi]$, $X_0\in H^1([-\pi,\pi])$;
  \item \label{assumption: relaxation of well-stretched condition}
      The inverse function of $X_0$ on any compact interval of $\BR$ is absolutely continuous.
\end{enumerate}
Then there exists a function $X = X(s,t)$ defined on $\BR \times [0,+\infty)$, satisfying that
\begin{enumerate}[(1)]
\item $X(s,t)$ is a smooth strong solution of \eqref{eqn: contour dynamic equation 1D} in $\BR\times (0,+\infty)$, i.e.,
\[
\pa_t X(s,t)
=  -\f{1}{4\pi} \mathrm{p.v.}\int_\BR \f{ X'(s',t)^2}{X(s,t)-X(s',t)} \,ds'
\]
holds pointwise in $\BR\times (0,+\infty)$.

\item \label{property: uniform convergence}
For any $\al\in (0,\f12)$,
$X(s,t)-s \in C^\al(\BR\times [0,+\infty))$.
As a consequence, $X(\cdot,t)$ converges uniformly to $X_0(\cdot)$ as $t\to 0^+$.
%\textcolor{red}{To be checked.}

\item For any $t\geq 0$, $X(\cdot,t)$ verifies the above three assumptions on $X_0(\cdot)$, with $\|X(\cdot,t)\|_{\dot{H}^1([-\pi,\pi])}$ being non-increasing in $t\in [0,+\infty)$.
    It additionally satisfies the well-stretched condition when $t>0$, i.e., there exists $\lambda = \lambda(t)>0$, such that
\[
|X(s_1,t)-X(s_2,t)|\geq \lambda(t)|s_1-s_2|\quad \forall\, s_1,s_2 \in \BR.
\]
In fact, $\f{1}{\lambda(t)}$ satisfies an estimate similar to \eqref{eqn: upper bound for F}. %, or equivalently, $\lam(t)$ enjoys a lower bound similar to \eqref{eqn: lower bound for f}.

\item
There exists a constant $c_\infty$, such that as $t\to +\infty$, $X(s,t)$ converges uniformly to $X_\infty(s): = s+c_\infty$ with an exponential rate.
The exponential convergence also holds for higher-order norms.
\end{enumerate}

\end{cor}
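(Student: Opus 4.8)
The plan is to deduce Corollary~\ref{cor: rephrasing in Lagrangian} from Theorem~\ref{thm: main thm} by translating everything through the change of unknown \eqref{eqn: def of f}, $f(X(s,t),t)=X'(s,t)$: once $f$ is constructed, conclusions (1)--(4) of the corollary become Lagrangian restatements of properties \ref{property: energy inequality in the main thm}--\ref{property: large-time analyticity} of $f$. I would use throughout the harmless time rescaling of Section~\ref{sec: derivation of the model}: if $f$ solves \eqref{eqn: the main equation}, then $\tilde f(x,t):=f(x,t/4)$ solves \eqref{eqn: equation for f}, and the particle velocity in \eqref{eqn: transporting velocity for particles}--\eqref{eqn: contour dynamic equation 1D} is $-\f14\CH\tilde f$. \emph{Step~1 (the dictionary $X_0\leftrightarrow f_0$).} Given $X_0$ satisfying hypotheses (i)--(iii), set $Y_0:=X_0^{-1}$ and define $f_0(x):=X_0'(Y_0(x))$ on $\BT$; by (i) this is a.e.\ positive and $2\pi$-periodic, with $F_0:=1/f_0=Y_0'$. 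The change of variables $x=X_0(s)$ (legitimate since $X_0$ is absolutely continuous and strictly increasing) gives
\[
\|f_0\|_{L^1(\BT)}=\int_{Y_0(0)}^{Y_0(0)+2\pi}X_0'(s)^2\,ds=\|X_0\|_{\dot{H}^1([-\pi,\pi])}^2,\qquad \|F_0\|_{L^1(\BT)}=Y_0(2\pi)-Y_0(0)=2\pi .
\]
So hypothesis (ii) is precisely $f_0\in L^1(\BT)$, hypothesis (iii) is precisely $F_0\in L^1(\BT)$ with $f_0>0$ a.e., and the normalization in (i) forces $\|F_0\|_{L^1}=2\pi$, hence $f_\infty=1$ in \eqref{eqn: equilibrium}. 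Applying Theorem~\ref{thm: main thm} to this $f_0$ yields $f$; set $F=1/f$, $\tilde f(x,t)=f(x,t/4)$, $\tilde F=1/\tilde f$.

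\emph{Step~2 (reconstructing $X$ for $t>0$ and verifying (1)).} For $t>0$, $\tilde f(\cdot,t)$ is smooth, positive and bounded above and below (property~\ref{property: positivity and smoothness}), and $\int_\BT\tilde F(\cdot,t)=2\pi$ for all $t$ by the conservation law. I would define $Y(x,t):=\gamma_0(t)+\int_0^x\tilde F(y,t)\,dy$, so that $Y(\cdot,t)-x$ is $2\pi$-periodic with $\pa_x Y=\tilde F>0$, and set $X(\cdot,t):=Y(\cdot,t)^{-1}$, which is then smooth, strictly increasing, satisfies $X(s+2\pi,t)=X(s,t)+2\pi$ and $X'(s,t)=\tilde f(X(s,t),t)$, consistently with \eqref{eqn: def of f}. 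The scalar $\gamma_0$ is forced by requiring that $X$ solve the contour equation: differentiating $Y(X(s,t),t)=s$ in $t$, inserting the rescaled conservation law $\pa_t\tilde F=\f14\pa_x(\CH\tilde f\cdot\tilde F)$ from \eqref{eqn: equation for big F} together with $\tilde f\tilde F\equiv1$, I would obtain
\[
\pa_t X(s,t)=-\f14\CH\tilde f(X(s,t),t)-\tilde f(X(s,t),t)\Big(\dot\gamma_0(t)-\f14\CH\tilde f(0,t)\,\tilde F(0,t)\Big),
\]
which coincides with $-\f14\CH\tilde f(\cdot,t)\big|_{x=X(s,t)}$ exactly when $\dot\gamma_0(t)=\f14\CH\tilde f(0,t)\,\tilde F(0,t)$; undoing the change of variables as in \eqref{eqn: transporting velocity for particles}--\eqref{eqn: contour dynamic equation 1D} turns the right-hand side into the principal-value integral of conclusion (1). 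This determines $X$ on $\BR\times(0,+\infty)$ up to the single additive constant left in $\gamma_0$.

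\emph{Step~3 (matching at $t=0$, fixing the constant, and conclusion (2)).} This is where I expect the only real difficulty. Since $f$ carries just $L^1$ data and, by \eqref{eqn: upper bound for F}, $\|F(\cdot,t)\|_{L^\infty}$ may blow up like $\exp(c/t)$ as $t\to0^+$, one cannot integrate the particle ODE (nor the ODE for $\gamma_0$) down to $t=0$ from the a priori bounds, and it is not clear a priori that $\gamma_0$ can be chosen so that $X(\cdot,t)\to X_0$ and not merely a translate of it. My plan is to carry the whole reconstruction through the approximation of Section~\ref{sec: existence for general initial data}: for the band-limited positive approximations $f_0^{(n)}\to f_0$ used there, the solution $f^{(n)}$ is smooth and positive up to $t=0$ by Proposition~\ref{prop: band-limited initial data intro}, so the Lagrangian map $X^{(n)}$ with $X^{(n)}(\cdot,0)=X_0^{(n)}$ (normalized by $\big(X_0^{(n)}\big)^{-1}(0)=X_0^{-1}(0)$, so that $X_0^{(n)}\to X_0$ uniformly) is unambiguous, and the a priori estimates behind Theorem~\ref{thm: main thm} provide the compactness needed to pass to the limit, producing $X$ with $X(\cdot,0)=X_0$. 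Once $X(\cdot,0)=X_0$ is known, conclusion (2) follows from the smoothness of $f$ for $t>0$, the embedding $X_0\in H^1([-\pi,\pi])\hookrightarrow C^{1/2}$ for the spatial regularity at $t=0$, and
\[
|X(s,t)-X_0(s)|\le\f14\int_0^t\|\CH\tilde f(\cdot,\tau)\|_{L^\infty}\,d\tau\lesssim\int_0^t\|\tilde f(\cdot,\tau)\|_{\dot{C}^{\al}}\,d\tau\lesssim t^{\f{1-5\al}{2}}\,(\cdots),
\]
using $\|\CH h\|_{L^\infty(\BT)}\lesssim\|h\|_{\dot{C}^{\al}}$ and the time-integrability estimate of property~\ref{property: positivity and smoothness}; combining these gives $X(s,t)-s\in C^{\al}(\BR\times[0,+\infty))$ for every $\al\in(0,\f12)$ and the uniform convergence $X(\cdot,t)\to X_0$ as $t\to0^+$.

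\emph{Step~4 (conclusions (3) and (4)).} Periodicity and monotonicity of $X(\cdot,t)$ are built into the construction; the change of variables of Step~1 gives $\|X(\cdot,t)\|_{\dot{H}^1([-\pi,\pi])}^2=\|f(\cdot,t)\|_{L^1}$, non-increasing by property~\ref{property: energy inequality in the main thm}; $\pa_x Y(\cdot,t)=\tilde F(\cdot,t)\in L^1$ makes $X(\cdot,t)^{-1}$ absolutely continuous on compacts; and for $t>0$, $X'(s,t)=\tilde f(X(s,t),t)\ge1/\|F(\cdot,t/4)\|_{L^\infty}=:\lambda(t)>0$, so $|X(s_1,t)-X(s_2,t)|\ge\lambda(t)|s_1-s_2|$ with $1/\lambda(t)=\|F(\cdot,t/4)\|_{L^\infty}$ bounded as in \eqref{eqn: upper bound for F}. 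For conclusion (4), properties~\ref{property: long-time convergence}--\ref{property: large-time analyticity} give $f(\cdot,t)\to f_\infty=1$ exponentially in every $H^k$ norm, hence $\tilde F(\cdot,t)\to1$ exponentially, so $\int_0^x(\tilde F(y,t)-1)\,dy\to0$ and $\dot\gamma_0(t)=\f14\CH(\tilde f-f_\infty)(0,t)\,\tilde F(0,t)\to0$ exponentially; thus $\gamma_0(t)\to-c_\infty$ exponentially for some constant $c_\infty$, and $Y(x,t)\to x-c_\infty$, i.e.\ $X(s,t)\to X_\infty(s)=s+c_\infty$ exponentially, the higher-order norms being handled the same way. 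Apart from Step~3, the argument is a bookkeeping translation of Theorem~\ref{thm: main thm} into the Lagrangian frame.
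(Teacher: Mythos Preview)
Your approach is correct but takes a genuinely different route from the paper. The paper does not reconstruct $X(\cdot,t)$ as the inverse of the cumulative distribution $Y(x,t)=\gamma_0(t)+\int_0^x\tilde F$; instead it builds the \emph{flow map} of the velocity field $-\tfrac14\CH\tilde f$. Concretely, for each $x\in\BR$ it solves the characteristic ODE $\tfrac{d}{dt}\Psi_t(x)=-\tfrac14\CH\tilde f(\Psi_t(x),t)$ with data prescribed at $t=1$ (where $\tilde f$ is smooth), and then uses the time-integrability of $\|\tilde f(\cdot,\tau)\|_{\dot C^{\alpha}}$ from Corollary~\ref{cor: bound for H1 norm} to show $\Psi_t(x)$ is H\"older in $t$ down to $t=0$, so $\Psi_0:=\lim_{t\to0^+}\Psi_t$ exists as a uniform limit. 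The paper then proves $\Psi_0$ is a homeomorphism: surjectivity and continuity are immediate, while injectivity uses the pushforward identity $(\Psi_0)_\#\tilde F(\cdot,1)=F_0$ together with the strict positivity of $\tilde F(\cdot,1)$ --- if $\Psi_0$ collapsed an interval, $F_0$ would contain a Dirac mass, contradicting $F_0\in L^1$. One then simply sets $X(s,t):=\Psi_t\circ\Psi_0^{-1}\circ X_0(s)$, which satisfies $X(\cdot,0)=X_0$ by construction and solves \eqref{eqn: contour dynamic equation 1D} for $t>0$.

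The two routes buy different things. Your inverse-CDF construction makes the relation $\partial_x Y=\tilde F$ explicit and reduces the whole question at $t>0$ to the single scalar $\gamma_0(t)$, but it leaves the matching at $t=0$ as a genuine task, which you resolve by re-running the approximation of Section~\ref{sec: existence for general initial data} at the Lagrangian level; this is sound (the uniform displacement bound $|X^{(n)}(s,t)-X_0^{(n)}(s)|\lesssim t^{(1-5\alpha)/2}$ with constants depending only on $\|f_0\|_{L^1},\|F_0\|_{L^1}$ gives the equicontinuity at $t=0$ needed to pass to the limit), but it duplicates work already hidden in Theorem~\ref{thm: main thm}. The paper's flow-map approach avoids any approximation here: it works directly with the constructed $f$, and the initial matching is automatic once $\Psi_0$ is shown to be invertible --- the pushforward/injectivity step being the one nontrivial idea you would not otherwise need. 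Both approaches ultimately rest on the same analytic input, namely the integrability of $\|\CH\tilde f\|_{L^\infty}$ near $t=0$ coming from property~\ref{property: positivity and smoothness}.
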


For simplicity, we assumed in \eqref{assumption: periodicity} that the periodicity of $X_0$ is $2\pi$ in both the Lagrangian and the Eulerian coordinate (cf.\;Remark \ref{rmk: remark on the periodicity}).
The assumption \eqref{assumption: energy class} only requires $X_0$ to belong to the energy class (see the discussion in Section \ref{sec: 2D Peskin} and also Remark \ref{rmk: energy class} below), without imposing any condition on its size.
The assumption \eqref{assumption: relaxation of well-stretched       condition} is a relaxation of the well-stretched assumption that is commonly used in the literature.
The function $X(s,t)$ found in Corollary \ref{cor: rephrasing in Lagrangian} can be naturally defined as a global solution to \eqref{eqn: contour dynamic equation 1D} with the initial data $X_0$.
For brevity, we choose not to elaborate the notion of the solution.

\section{Preliminaries}
\label{sec: a priori estimates}

In Sections \ref{sec: a priori estimates}-\ref{sec: smoothing and decay}, we will establish a series of a priori estimates for nice solutions to \eqref{eqn: the main equation} (and also \eqref{eqn: equation for big F}), which prepares us for proving existence of weak solutions later.
Throughout these sections, we will always assume $f = f(x,t)$ is a positive strong solution to \eqref{eqn: the main equation}.
The definition of the strong solution is as follows.

\begin{definition}
\label{def: strong solution}
Given $T\in [0,+\infty]$, $f(x,t)$ is said to be a strong solution to \eqref{eqn: the main equation} on $\BT\times [0,T)$, if \
\begin{enumerate}[(a)]
\item $f\in C_{loc}^1(\BT\times [0,T))$;
\item $(-\D)^{\f12} f$ can be defined pointwise as a function in $C_{loc}(\BT\times [0,T))$;
\item \eqref{eqn: the main equation} holds pointwise in $\BT\times [0,T)$.
\end{enumerate}
Here the time derivative of $f$ at $t = 0$ is understood as the right derivative.
\end{definition}

When discussing strong solutions, we always implicitly assume that the initial data $f_0$ is correspondingly smooth.

We start with a few basic properties of positive strong solutions.

\begin{lem}
\label{lem: energy bound and decay of L^p norms}
Suppose $f$ is a positive strong solution to \eqref{eqn: the main equation} on $\BT\times [0,T)$.
Then
\begin{enumerate}
\item (Energy estimate) For any $t\in [0,T)$,
\beq
\f12 \|f(\cdot,t)\|_{L^1} + \int_0^t \|f(\cdot,\tau)\|_{\dot{H}^{1/2} }^2\,d\tau = \f12 \|f_0\|_{L^1}.
\label{eqn: energy estimate}
\eeq

\item \label{part: decay of L^p norms of f and F}
(Decay of $L^p$-norms of $f$ and $F$)
For any $p\in [1,+\infty)$, $\|f(\cdot,t)\|_{L^p }$ and $\|F(\cdot,t)\|_{L^p}$ are non-increasing in $t$.
In particular, $\|F(\cdot,t)\|_{L^1}$ is time-invariant, and $\|f(\cdot,t)\|_{L^1}\geq 2\pi f_\infty$, where $f_\infty$ is defined in \eqref{eqn: equilibrium}.
\end{enumerate}

\begin{proof}
\eqref{eqn: energy estimate} can be readily proved by integrating \eqref{eqn: the main equation} and integration by parts.

For arbitrary $\alpha\in \BR$, %we have
%\beq
%\pa_t f^\alpha =  \CH f \cdot \pa_x f^\alpha - \alpha f^\alpha (-\D)^{\f12} f,
%\label{eqn: equation for f alpha power}
%\eeq
%or equivalently,
\beq
\pa_t f^\alpha = \pa_x \big(\CH f \cdot f^\alpha \big) - (1+\alpha)f^\alpha (-\D)^{\f12} f.
\label{eqn: equation for f alpha power conversed form}
\eeq
Integrating \eqref{eqn: equation for f alpha power conversed form} yields
\[
\f{d}{dt}\int_\BT f^\alpha(x,t) \,dx = -\f{1+\alpha}{\pi} \int_{\BT} f^{\alpha}(x,t)\cdot \mathrm{p.v.}\int_\BT \f{f(x,t) - f(y,t)}{4\sin^2\big(\f{x-y}{2}\big)}\,dy\,dx
\]
Given the regularity assumption on $f(\cdot,t)$, we may exchange the $x$- and $y$-variables to obtain
%Hence,
\[
\begin{split}
\f{d}{dt}\int_\BT f^\alpha(x,t) \,dx = &\; -\f{1+\alpha}{2\pi} \lim_{\d \to 0^+} \int_{|x-y|\geq \d} \big(f^{\alpha}(x,t)-f^{\alpha}(y,t)\big)\cdot \f{f(x,t) - f(y,t)}{4\sin^2\big(\f{x-y}{2}\big)}\,dx\,dy.
\end{split}
\]
%We skip its justification.
It is clear that, for any $a,b \geq 0\in \BR$,
\[
\big(a^{\alpha}-b^{\alpha}\big)(a-b)
\begin{cases}
\geq 0, & \mbox{if } \alpha > 0, \\
=0,     & \mbox{if } \alpha = 0, \\
\leq 0, & \mbox{if } \alpha < 0.
\end{cases}
\]
Hence, for all $\alpha\in (-\infty,-1]\cup[0,+\infty)$,
\[
\f{d}{dt}\int_\BT f^\alpha(x,t) \,dx \leq 0.
\]
This complete the proof of the second claim.
%In particular, if $F\in L^p$ for any $p\geq 1$, then it is uniformly $L^p$.
The time-invariance of $\|F(\cdot,t)\|_{L^1}$ follows directly from \eqref{eqn: equation for big F}.
We can further prove the lower bound for $\|f(\cdot,t)\|_{L^1}$ by the Cauchy-Schwarz inequality.
\end{proof}
\begin{rmk}
\label{rmk: energy class}
Recall that \eqref{eqn: the main equation} is derived from the Hookean elasticity case with $k_0 = 1$, where the elastic energy density is $\CE(p) = \f12 p^2$ as $\CE'(p) = \CT(p)$.
We also find
\[
\f12\|f(\cdot,t)\|_{L^1(\BT)} = \f12 \int_{X^{-1}(0,t)}^{X^{-1}(2\pi,t)} f(X(s,t),t)\,dX(s,t) = \int_{X^{-1}(0,t)}^{X^{-1}(2\pi,t)}\f12 |X'(s,t)|^2\,ds,
\]
which is exactly the total elastic energy of the string in one period in the Lagrangian coordinate.
This is why we called \eqref{eqn: energy estimate} the energy estimate.
Hence, we shall also call $L^1(\BT)$ the energy class for \eqref{eqn: the main equation}.
%\textcolor{red}{
In general, for \eqref{eqn: equation for f general tension}, the energy estimate writes
\[
\f{d}{dt}\int_\BT \f{\CE(f)}{f}\,dx = -\|\CT(f)\|_{\dot{H}^{\f12}}^2, \mbox{ where }\CE(p) := \int_0^p \CT(q)\,dq.
\]
%}
\end{rmk}
\end{lem}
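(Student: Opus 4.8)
The plan is to obtain every assertion by testing \eqref{eqn: the main equation} against a suitable power $f^\alpha$ and exploiting the positivity of the kernel of $(-\D)^{\f12}$. For the energy estimate I would simply integrate \eqref{eqn: the main equation} over $\BT$. Since there are no boundary terms, the drift term integrates by parts as $\int_\BT \CH f\cdot\pa_x f\,dx = -\int_\BT f\,\pa_x(\CH f)\,dx = -\int_\BT f\,(-\D)^{\f12}f\,dx$, using that $\CH$ commutes with $\pa_x$ together with the identity $(-\D)^{\f12}f=\CH f'$ from \eqref{eqn: fractional Laplacian}. Hence $\f{d}{dt}\int_\BT f\,dx = -2\int_\BT f\,(-\D)^{\f12}f\,dx$, and Plancherel rewrites $\int_\BT f\,(-\D)^{\f12}f\,dx$ as a constant multiple of $\|f\|_{\dot H^{1/2}}^2$. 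Because $f$ is a strong solution, $(-\D)^{\f12}f$ and hence the right-hand side are continuous in $t$, so integrating from $0$ to $t$ gives the \emph{equality} \eqref{eqn: energy estimate} rather than an inequality.

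For the monotonicity statements, the key observation is that an arbitrary real power $f^\alpha$ again satisfies a conservation-law-type equation,
\[
\pa_t f^\alpha = \pa_x\bigl(\CH f\cdot f^\alpha\bigr) - (1+\alpha)\,f^\alpha\,(-\D)^{\f12}f,
\]
which I would derive by multiplying \eqref{eqn: the main equation} by $\alpha f^{\alpha-1}$ (legitimate since $f\in C^1$ and $f>0$), writing $\alpha f^{\alpha-1}\pa_x f = \pa_x f^\alpha$, and moving one derivative back onto $\CH f$. Integrating over $\BT$ annihilates the divergence term, so $\f{d}{dt}\int_\BT f^\alpha\,dx = -(1+\alpha)\int_\BT f^\alpha\,(-\D)^{\f12}f\,dx$. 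Next I would symmetrize the last integral in $x$ and $y$ with the help of the singular-integral form of $(-\D)^{\f12}$ in \eqref{eqn: fractional Laplacian}, obtaining
\[
\int_\BT f^\alpha\,(-\D)^{\f12}f\,dx = \f{1}{2\pi}\,\pv\iint_{\BT\times\BT}\bigl(f^\alpha(x)-f^\alpha(y)\bigr)\,\f{f(x)-f(y)}{4\sin^2\bigl((x-y)/2\bigr)}\,dx\,dy .
\]
Since the kernel is nonnegative and, for $a,b>0$, $(a^\alpha-b^\alpha)(a-b)$ has the sign of $\alpha$ (and is $0$ when $\alpha=0$), this integral has the sign of $\alpha$, so $\f{d}{dt}\int_\BT f^\alpha\,dx$ has the sign of $-(1+\alpha)\sgn(\alpha)$; an elementary case check shows this is $\le 0$ exactly for $\alpha\in(-\infty,-1]\cup[0,+\infty)$. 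Taking $\alpha=p\ge 1$ gives that $\|f(\cdot,t)\|_{L^p}$ is non-increasing, and $\alpha=-p\le -1$ gives the same for $\|F(\cdot,t)\|_{L^p}$, while the borderline $\alpha=-1$ yields $\f{d}{dt}\int_\BT F\,dx = 0$, i.e.\ $\|F(\cdot,t)\|_{L^1}$ is conserved, as it must be since $F$ solves the conservation law \eqref{eqn: equation for big F}.

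Finally, Cauchy--Schwarz applied to the pointwise identity $1=\sqrt f\,\sqrt F$ gives $(2\pi)^2 = \bigl(\int_\BT \sqrt f\,\sqrt F\,dx\bigr)^2 \le \|f(\cdot,t)\|_{L^1}\,\|F(\cdot,t)\|_{L^1} = \|f(\cdot,t)\|_{L^1}\,\|F_0\|_{L^1}$, and since $\|F_0\|_{L^1}=2\pi/f_\infty$ by \eqref{eqn: equilibrium}, this is precisely $\|f(\cdot,t)\|_{L^1}\ge 2\pi f_\infty$. None of this is genuinely hard: the computations are elementary, and the only step requiring any care is the symmetrization together with the interchange of the principal value and the outer integral. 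This is exactly where the $C^1_{loc}$ regularity built into the definition of a strong solution enters: it makes $(-\D)^{\f12}f$ a bona fide continuous function, and in the symmetrized double integral the numerator $\bigl(f^\alpha(x)-f^\alpha(y)\bigr)\bigl(f(x)-f(y)\bigr)$ vanishes to second order on the diagonal, so the integrand is bounded there and the double integral is absolutely convergent, which legitimizes dropping the principal value and applying Fubini. The only remaining bookkeeping point is tracking the normalization constants in \eqref{eqn: Hilbert transform}--\eqref{eqn: fractional Laplacian} so that \eqref{eqn: energy estimate} comes out with the stated coefficient.
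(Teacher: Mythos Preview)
Your proposal is correct and follows essentially the same route as the paper: integrate \eqref{eqn: the main equation} for the energy identity, derive the conservation-law form for $f^\alpha$, integrate and symmetrize against the positive kernel of $(-\D)^{\f12}$ to get the sign, and finish with Cauchy--Schwarz on $1=\sqrt f\,\sqrt F$. You have in fact supplied more detail than the paper does, including the justification for dropping the principal value after symmetrization.
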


\begin{lem}
\label{lem: max principle}
Assume $f$ to be a positive strong solution to \eqref{eqn: the main equation} on $\BT\times [0,T)$.
Then $f$ satisfies maximum and minimum principle,
i.e., for all $0\leq t_1\leq t_2<T$,
\[
\inf_{x} f(x,t_1)\leq \inf_{x} f(x,t_2) \leq \sup_{x} f(x,t_2) \leq \sup_{x} f(x,t_1).
\]
If we additionally assume $\pa_x f \in C_{loc}^1(\BT\times [0,T))$ and that $(-\D)^{\f12}(\pa_x f)$ can be defined pointwise in $\BT\times [0,T)$ and is locally continuous,
then $\pa_x f$ also satisfies the maximum and minimum principle, i.e.,
\[
\inf_{x} \pa_x f(x,t_1)\leq \inf_{x} \pa_x f(x,t_2) \leq \sup_{x} \pa_x f(x,t_2) \leq \sup_{x} \pa_x f(x,t_1).
\]

\begin{proof}
It follows immediate from \eqref{eqn: the main equation} that $f$ satisfies the maximum/minimum principle, since $f\geq 0$.
Differentiating \eqref{eqn: the main equation} yields
\beqo
\pa_t\big(\pa_x f\big)
= \CH f \cdot \pa_x \big(\pa_x f\big) - f (-\D)^{\f12} \big(\pa_x f\big).
%\label{eqn: equation for derivative of f}
\eeqo
Hence, $\pa_x f$ also satisfies the maximum/minimum principle.
\end{proof}
\begin{rmk}
\label{rmk: meaning of max and min principle}
In the context of the Peskin problem, the well-stretched assumption \eqref{eqn: well-stretched condition} (with constant $\lambda>0$) corresponds to $f(\cdot,t)\geq \lam$ in \eqref{eqn: the main equation} and \eqref{eqn: equation for f}.
As a result, strong solutions to \eqref{eqn: contour dynamic equation 1D prelim} satisfy well-stretched condition for all time with the constant $\lam$ as long as it is initially this case.
Besides, since
\[
\pa_x f(X(s,t),t) = \f{X''(s,t)}{X'(s,t)} = \f12 \pa_{s} \ln (|X'(s)|^2),
\]
if $f(x,0)\geq \lam>0$ and $\|X_0''\|_{L^\infty}<+\infty$, we have $X''$ to be uniformly bounded in time.
\end{rmk}
\end{lem}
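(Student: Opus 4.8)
The plan is to read \eqref{eqn: the main equation} as a nonlinear, nonlocal transport--diffusion equation whose diffusion coefficient $f$ is nonnegative, and to run the classical maximum-principle argument, then to bootstrap to $\pa_x f$ using the algebraic form of the equation.

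The core is an auxiliary principle: if $g\in C^1_{loc}(\BT\times[0,T))$, $(-\D)^{\f12}g$ is defined pointwise and locally continuous, and $g$ solves
\beq
\pa_t g = b(x,t)\,\pa_x g - c(x,t)\,(-\D)^{\f12} g\quad\text{pointwise on }\BT\times[0,T),
\eeq
with $b$ locally bounded and $c\ge 0$, then $t\mapsto\sup_x g(\cdot,t)$ is non-increasing and $t\mapsto\inf_x g(\cdot,t)$ is non-decreasing. To prove it, fix $0\le t_1\le t_2<T$, let $\e>0$, and set $g_\e(x,t):=g(x,t)-\e t$; since $\e t$ is $x$-independent, $(-\D)^{\f12}g_\e=(-\D)^{\f12}g$, hence $\pa_t g_\e - b\,\pa_x g_\e + c\,(-\D)^{\f12}g_\e=-\e<0$ everywhere. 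Pick a maximizer $(x_*,t_*)$ of the continuous function $g_\e$ over the compact set $\BT\times[t_1,t_2]$, and suppose toward a contradiction that $t_*>t_1$. At $(x_*,t_*)$ we then have $\pa_x g_\e=0$ (interior spatial maximum on $\BT$, which has no boundary), $\pa_t g_\e\ge 0$ (one-sided time derivative at a maximum over $[t_1,t_*]$), and $(-\D)^{\f12}g_\e(x_*,t_*)\ge 0$: indeed $g_\e(x_*,t_*)-g_\e(y,t_*)\ge 0$ for all $y$ and the kernel in \eqref{eqn: fractional Laplacian} is positive, so the principal value there (which exists by hypothesis) is a limit of nonnegative quantities. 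Substituting into the equation gives $0\le\pa_t g_\e - b\cdot 0 + c\,(-\D)^{\f12}g_\e=-\e<0$, a contradiction; thus the maximum is attained at $t_*=t_1$, so $g_\e\le\sup_x g(\cdot,t_1)$ on $\BT\times[t_1,t_2]$, and letting $\e\to 0^+$ yields $\sup_x g(\cdot,t_2)\le\sup_x g(\cdot,t_1)$. Applying this to $-g$, which solves the same type of equation with the same $b$ and the same $c\ge0$, gives the statement for $\inf_x g$.

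Both assertions of the lemma then follow. For $f$, \eqref{eqn: the main equation} is exactly of the above form with $b=\CH f$ (finite and continuous since $f\in C^1$) and $c=f\ge 0$, so $\inf_x f(\cdot,t)$ is non-decreasing, $\sup_x f(\cdot,t)$ is non-increasing, and chaining these with the trivial $\inf_x f(\cdot,t_2)\le\sup_x f(\cdot,t_2)$ gives the four displayed inequalities. For $\pa_x f$, differentiate \eqref{eqn: the main equation} in $x$ and use $\pa_x\CH f=\CH\pa_x f=(-\D)^{\f12}f$ together with $\pa_x(-\D)^{\f12}f=(-\D)^{\f12}\pa_x f$: the term $(-\D)^{\f12}f\cdot\pa_x f$ produced by differentiating the drift and the one produced by differentiating the diffusion cancel, leaving
\beq
\pa_t(\pa_x f)=\CH f\cdot\pa_x(\pa_x f)-f\,(-\D)^{\f12}(\pa_x f),
\eeq
again of the same form with $b=\CH f$, $c=f\ge0$. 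Under the extra regularity hypothesis on $\pa_x f$, the auxiliary principle applies to $g=\pa_x f$ and yields its maximum/minimum principle.

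The one genuinely nontrivial point, and the step I would be most careful with, is the nonlocal maximum inequality $(-\D)^{\f12}g_\e(x_*,t_*)\ge 0$: this is where the nonlocality of $(-\D)^{\f12}$ enters, and the argument must use only the pointwise definition of $(-\D)^{\f12}g$ that is built into the notion of a strong solution, not any integrability of second derivatives. Fortunately the integrand has a manifest sign once the principal value is known to converge, so no extra estimates are needed; everything else is the routine $\e$-perturbation bookkeeping of the classical argument.
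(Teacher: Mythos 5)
Your proposal is correct and follows essentially the same route as the paper: both rely on the observation that \eqref{eqn: the main equation} is a nonlocal drift--diffusion equation with nonnegative diffusion coefficient $c=f\geq 0$, to which the classical $\e$-perturbation maximum-principle argument applies, and both differentiate the equation and exploit the cancellation $\pa_x\CH f\cdot\pa_x f - \pa_x f\cdot(-\D)^{1/2}f=0$ to see that $\pa_x f$ solves an equation of the same form. The paper simply treats these steps as standard and leaves them implicit, whereas you have spelled them out in full, including the sign of $(-\D)^{1/2}g_\e$ at an interior spatial maximum.
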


\section{Instant Positivity and Boundedness}
\label{sec: positivity and boundedness}
%Next, we establish estimates regarding upper and lower bounds for $f$.
In this section, we want to prove that for any $t>0$, the solution $f(\cdot,t)$ admits positive and finite lower and upper bounds, which only depend on $\|f_0\|_{L^1}$ and $\|F_0\|_{L^1}$ a priori.

\begin{lem}
\label{lem: lower bound}
Suppose  $f = f(x,t)$ is a positive strong solution on $\BT\times [0,+\infty)$.
Denote
$f_*(t) := \min_{x\in \BT} f(x,t)$.
%Denote $A = \sqrt{8}\|F_0\|_{L^1(\BT)}^{-1/2}>0$.
Then for all $t>0$,
\beq
%f_*(t) \geq %\min \left\{\exp\left(-\f{1}{A}\right),\,
%\exp\left[-\f1A\cdot \coth\left(\f{A}{2\pi}t\right)\right], %\right\},
f_*(t)\geq 8\|F_0\|_{L^1}^{-1}
\exp\left[- \coth\left(\f{4}{\pi}\|F_0\|_{L^1}^{-1} t\right)\right].
\label{eqn: lower bound for f}
\eeq
or equivalently,
%if $\|F(\cdot,t)\|_{L^\infty}\gg 1$, then
\[
\|F(\cdot,t)\|_{L^\infty}\leq \f18\|F_0\|_{L^1}
%\max\left\{\exp\left(\f{1}{A}\right),\,
\exp\left[\coth\left(\f{4}{\pi}\|F_0\|_{L^1}^{-1} t\right)\right].
%\right\}
\]
In particular, when $t\leq \|F_0\|_{L^1}$,
\beq
\|F(\cdot,t)\|_{L^\infty}\leq \f18\|F_0\|_{L^1}
%\max\left\{\exp\left(\f{1}{A}\right),\,
\exp\big(C\|F_0\|_{L^1} t^{-1}\big),
%\right\}
\label{eqn: simplified upper bound for F}
\eeq
where $C$ is a universal constant.
\begin{proof}
Denote $A := \sqrt{8}\|F_0\|_{L^1(\BT)}^{-1/2}>0$.
We first prove that
\beq
f_*(t) \geq %\min \left\{\exp\left(-\f{1}{A}\right),\,
\exp\left[-\f1A\cdot \coth\left(\f{A}{2\pi}t\right)\right].
\label{eqn: lower bound for f prelim}
\eeq

Suppose the minimum $f_*(t)$ is attained at $x_*(t)$.
Since $\coth(\f{A}{2\pi}t)>1$, we may assume $f_*(t)< \exp(-\f{1}{A})$ for all time of interest, as otherwise \eqref{eqn: lower bound for f prelim} holds automatically. % due to the minimum principle Lemma \ref{lem: max principle}.
We first derive a lower bound for $-(-\D)^{\f12} f(x_*(t),t)$.
With $\d\in (0,\pi)$ to be determined,
\[
\begin{split}
&\;-(-\D)^{\f12} f(x_*,t)\\
\geq  &\; \f1\pi \int_{\BT \backslash [-\d,\d]} \f{f(x_*+y)-f(x_*)}{4\sin^2\big(\f{y}{2}\big)}\,dy\\
= &\; \f1\pi \left( \int_{\BT \backslash [-\d,\d]} \f{f(x_*+y)}{4\sin^2\big(\f{y}{2}\big)}\,dy\right)
\left(\int_{\BT \backslash [-\d,\d]} \f{1}{f(x_*+y)}\,dy\right)
\left(\int_{\BT \backslash [-\d,\d]} F(x_*+y)\,dy\right)^{-1}\\
&\;- \f{f(x_*)}{\pi \tan\big(\f{\d}{2}\big)}\\
\geq &\; \f1\pi \left( \int_\d^\pi \f{1}{\sin\big(\f{y}{2}\big)}\,dy\right)^2
\|F\|_{L^1(\BT)}^{-1}
- \f{f(x_*)}{\pi \tan\big(\f{\d}{2}\big)}\\
\geq &\; \f{4}{\pi}\left|\ln \tan\f{ \d}{4}\right|^2 \|F_0\|_{L^1(\BT)}^{-1}
- \f{f(x_*)}{2\pi \tan\big(\f{\d}{4}\big)}.
\end{split}
\]
%where $C$ is a universal constant.
Here we used Cauchy-Schwarz inequality and the fact $\|F(\cdot,t)\|_{L^1(\BT)} \equiv \|F_0\|_{L^1(\BT)}$.
%\[
%\f{d}{dt}f_*(t) \geq \f14 f_*(t) (-\D)^{\f12} f(x_*(t),t).
%\]
Since $f_*(t)< \exp(-\f{1}{A})<1$, we may take $\d\in (0,\pi)$ such that
$\tan (\f{\d}{4}) =  f(x_*)$.
This gives
\[
-(-\D)^{\f12} f(x_*,t)
\geq \f{4}{\pi}\|F_0\|_{L^1(\BT)}^{-1} \big|\ln f(x_*,t)\big|^2 - \f{1}{2\pi},
\]

Since $f\in C_{loc}^1(\BT\times [0,+\infty))$, we may argue as in e.g.\;\cite[Theorem 3.1]{cordoba2009maximum} to find that
%Now we claim as in \textcolor{red}{(add ref here, I suppose its proof is similar to that in the Muskat problem. The argument may affect the definition of strong solutions.)} that
\[
\f{d}{dt}f_*(t) = - f(x_*,t)(-\D)^{\f12} f(x_*,t) \geq f_*(t) \left[\f{4}{\pi}\|F_0\|_{L^1(\BT)}^{-1} \big|\ln f_*(t)\big|^2 - \f{1}{2\pi}\right].
\]
Hence, with $A = \sqrt{8}\|F_0\|_{L^1(\BT)}^{-1/2}>0$,
\[
\f{d}{dt}\left(\f{1}{\ln f_*(t)}\right)
\leq
-\f1{2\pi} \left[A^2
%8\|F_0\|_{L^1(\BT)}^{-1}
- \big(\ln f_*(t)\big)^{-2}\right].
\]
%\textcolor{red}{We need to make sure the right-hand side has the correct sign.}
%Recall
Denote $g_*(t) = (\ln f_*(t))^{-1}$.
Since $f_*(t)<\exp(-\f{1}{A})$, we have $g_*(t)\in (-A,0]$ for all time of interest (including $t = 0$).
Then we write the above inequality as
%\[
%\left(\f{1}{A - g_*}+\f{1}{A + g_*} \right) \f{dg_*}{dt}  \leq -\f{A}{4\pi}.
%\]
\[
\f{d}{dt}\ln \left(\f{A + g_*(t)}{A - g_*(t)} \right) \leq -\f{A}{\pi}.
\]
Since $g_*(0)\in (-A,0]$,
\[
\ln \left(\f{A + g_*(t)}{A - g_*(t)} \right) \leq -\f{A}{\pi} t + \ln \left(\f{A + g_*(0)}{A - g_*(0)} \right)\leq -\f{A}{\pi}t.
\]
Therefore,
%\[
%\f{2A}{A - g_*(t)} \leq 1+\exp\left(-\f{A}{4\pi}t\right).
%\]
\[
\f{1}{\ln f_*(t)} \leq A\cdot \f{\exp\big(-\f{A}{\pi}t\big)-1}{\exp\big(-\f{A}{\pi}t\big)+1}.
\]
Then \eqref{eqn: lower bound for f prelim} follows.

Recall that if $f = f(x,t)$ is a strong solution to \eqref{eqn: the main equation} on $\BT$, then for any $\lambda >0$, $f_\lam (x,t): = \lam f(x,\lam t)$ is a solution corresponding to initial condition $\lam f_0(x)$.
Now applying \eqref{eqn: lower bound for f prelim} to $f_\lam$, we deduce that, %for all $\lam>0$,
\beqo
f_{\lam,*}(t)  := \min_{x\in \BT} f_\lam (x,t)\geq
\exp\left[-\f1{\lam^{1/2}A}\cdot \coth\left(\f{\lam^{1/2}A}{2\pi}t\right)\right].
\eeqo
Here we still defined $A = \sqrt{8}\|F_0\|_{L^1(\BT)}^{-1/2}$.
Since $\min_{x\in \BT} f_\lam (x,t) = \lam \min_{x\in \BT} f (x,\lam t)$, for all $\lam,\tau>0$,
\beqo
\min_{x\in \BT}  f(x,\tau)\geq \lam^{-1}
\exp\left[-\f1{\lam^{1/2}A}\cdot \coth\left(\f{\lam^{-1/2}A}{2\pi}\tau\right)\right].
\eeqo
Taking $\lam = A^{-2}$ yields \eqref{eqn: lower bound for f}.
\end{proof}
\begin{rmk}
We chose \eqref{eqn: lower bound for f} instead of \eqref{eqn: lower bound for f prelim} as the main estimate, because \eqref{eqn: lower bound for f} is scaling-invariant while \eqref{eqn: lower bound for f prelim} is not.
Note that $\|F_0\|_{L^1}^{-1} t$ is a dimensionless quantity under the scaling $f_\lam (x,t): = \lam f(x,\lam t)$.
\end{rmk}
\end{lem}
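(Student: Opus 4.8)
\medskip
\noindent\textbf{Proof proposal.}
The plan is to extract from \eqref{eqn: the main equation} a closed, autonomous differential inequality for $f_*(t):=\min_{x\in\BT}f(x,t)$ and then integrate it. Since $f$ is a positive strong solution, at a point $x_*(t)$ realizing the spatial minimum one has $\pa_x f(x_*(t),t)=0$, so the drift term drops out and, formally,
\[
\f{d}{dt}f_*(t)=-f(x_*(t),t)\,(-\D)^{\f12}f(x_*(t),t),
\]
which is $\ge 0$ since $(-\D)^{\f12}f(x_*,t)\le 0$ at a minimum; in particular $f_*$ is non-decreasing (consistent with Lemma \ref{lem: max principle}), so it suffices to track only the times at which $f_*(t)<1$, the asserted bound being vacuous otherwise. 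My first step is to make the displayed identity rigorous, i.e.\ to show $f_*$ is Lipschitz in $t$ with the identity holding for a.e.\ $t$, even though $x_*(t)$ need be neither unique nor differentiable; this is the standard maximum-principle argument for nonlocal operators, as in C\'{o}rdoba--C\'{o}rdoba. I expect this step, together with the bookkeeping of constants below, to be the only real technical point, the rest being a one-variable ODE computation.

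The heart of the proof is a quantitative lower bound for $-(-\D)^{\f12}f(x_*,t)$ in terms of $f_*(t)$ and the conserved mass $\|F(\cdot,t)\|_{L^1}=\|F_0\|_{L^1}$ (Lemma \ref{lem: energy bound and decay of L^p norms}). Because $f(x_*)-f(y)\le 0$ for $y$ near $x_*$, I would drop the principal-value region $|y|<\d$ at the cost of a controlled error,
\[
-(-\D)^{\f12}f(x_*)\ge \f1\pi\int_{\BT\setminus(-\d,\d)}\f{f(x_*+y)}{4\sin^2(y/2)}\,dy-\f{f(x_*)}{\pi}\int_{\BT\setminus(-\d,\d)}\f{dy}{4\sin^2(y/2)},
\]
bound the first integral from below by Cauchy-Schwarz tested against $1/\sqrt{f}$,
\[
\int_{\BT\setminus(-\d,\d)}\f{f(x_*+y)}{4\sin^2(y/2)}\,dy\ge \|F_0\|_{L^1}^{-1}\Big(\int_{\BT\setminus(-\d,\d)}\f{dy}{2|\sin(y/2)|}\Big)^2,
\]
and use $\int_{\BT\setminus(-\d,\d)}\f{dy}{2|\sin(y/2)|}\sim|\ln\tan(\d/4)|$ and $\int_{\BT\setminus(-\d,\d)}\f{dy}{4\sin^2(y/2)}\sim\cot(\d/2)$. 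Choosing $\d\in(0,\pi)$ with $\tan(\d/4)=f(x_*)$, which is legitimate since $f_*<1$, then yields
\[
-(-\D)^{\f12}f(x_*,t)\ge \f4\pi\,\|F_0\|_{L^1}^{-1}\,|\ln f_*(t)|^2-\f1{2\pi}.
\]
This is precisely how the degeneracy of the diffusion is defeated: the coefficient $f(x_*)$ is tiny when $f_*$ is small, but $(-\D)^{\f12}f(x_*)$ is then very negative, of order $|\ln f_*|^2$, so their product still drives $f_*$ upward.

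Combining the two estimates gives $\f{d}{dt}f_*\ge f_*\big(\f4\pi\|F_0\|_{L^1}^{-1}|\ln f_*|^2-\f1{2\pi}\big)$. Writing $A:=\sqrt{8}\,\|F_0\|_{L^1}^{-1/2}$ and $g_*(t):=1/\ln f_*(t)$, which lies in $(-A,0)$ while $f_*<1$, this becomes the Riccati-type inequality $g_*'\le -\f1{2\pi}(A^2-g_*^2)$; integrating its consequence $\f{d}{dt}\ln\f{A+g_*}{A-g_*}\le -\f{A}{\pi}$ and using $g_*(0)\le 0$ gives $g_*(t)\le -A\tanh(\f{A}{2\pi}t)$, i.e.\ $f_*(t)\ge \exp[-\f1A\coth(\f{A}{2\pi}t)]$. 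This preliminary estimate is not scale invariant, so as a last step I would apply it to the rescaled solutions $f_\lambda(x,t):=\lambda f(x,\lambda t)$, for which $\|F_0\|_{L^1}$ is replaced by $\lambda^{-1}\|F_0\|_{L^1}$, and optimize in $\lambda$; the choice $\lambda=\|F_0\|_{L^1}/8$ produces exactly \eqref{eqn: lower bound for f}, hence the equivalent bound on $\|F(\cdot,t)\|_{L^\infty}=1/f_*(t)$, and \eqref{eqn: simplified upper bound for F} then follows from $\coth r\le 1+1/r$ for $r>0$ together with the restriction $t\le\|F_0\|_{L^1}$.
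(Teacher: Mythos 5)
Your proposal follows the paper's proof step by step: the same Córdoba--Córdoba-style pointwise ODE at the minimum, the same splitting at a radius $\delta$ combined with Cauchy--Schwarz against $1/\sqrt{f}$ and conservation of $\|F\|_{L^1}$, the same choice $\tan(\delta/4) = f(x_*)$ yielding $-(-\Delta)^{1/2}f(x_*) \geq \frac{4}{\pi}\|F_0\|_{L^1}^{-1}|\ln f_*|^2 - \frac{1}{2\pi}$, the same Riccati substitution $g_* = 1/\ln f_*$ with $A = \sqrt{8}\|F_0\|_{L^1}^{-1/2}$, and the same final rescaling with $\lambda = A^{-2} = \|F_0\|_{L^1}/8$. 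The argument is correct and essentially identical to the one in the paper.
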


Next we turn to study the upper bound for $f(\cdot,t)$.
In the following lemma, we will prove a ``from $L^1$ to $L^\infty$" type estimate, which states that the $\|f(\cdot,t)\|_{L^\infty}$ should enjoy a decay like $t^{-1/2}$ when $t\ll 1$.
This may be reminiscent of similar boundedness results in, e.g., \cite{caffarelli2010drift,caffarelli2013regularity}, which are proved by considering cut-offs of the solution and applying a De Giorgi-Nash-Moser-type iteration (without the part of proving H\"{o}lder regularity).
Here we provide a different argument.

\begin{lem}
\label{lem: L inf bound}
%Let $f_0\in L^1(\BT)$.
Suppose $f = f(x,t)$ is a strong solution to \eqref{eqn: the main equation} on $\BT\times [0,+\infty)$.
Then for $0<t\leq \|f_0\|_{L^1}^{-1}$,
\[
\|f(\cdot,t)\|_{L^\infty(\BT)}\leq Ct^{-\f12} \|f_0\|_{L^1}^{\f12},
\]
where $C>0$ is a universal constant.
Consequently (cf.\;Lemma \ref{lem: max principle}), for $t>\|f_0\|_{L^1}^{-1}$,
\[
\|f(\cdot,t)\|_{L^\infty(\BT)}\leq C\|f_0\|_{L^1}.
\]
\begin{rmk}
The exponents of $t$ and $\|f_0\|_{L^1}$ in the above estimate are sharp in the view of dimension analysis (cf.\;the energy estimate in Lemma \ref{lem: energy bound and decay of L^p norms}).
\end{rmk}

\begin{proof}
Without loss of generality, let us assume $\|f_0\|_{L^1}\geq 1$ up to suitable scaling.
We proceed in several steps.

\setcounter{step}{0}
\begin{step}[Basic setup]
Let $W: [0,+\infty) \to [0,+\infty)$ be a convex increasing function, which is to be specified later.
For the moment, we assume $W$ is finite and locally $C^{1,1}$ on $[0,+\infty)$, $W(0) = 0$, and $W'(0) = 1$.
Define
\[
I(t) =
\begin{cases}
  \|f_0\|_{L^1}, & \mbox{if } t=0, \\
  \f1{t^{1/2}} \int_\BT W\big(t^{1/2} f(x,t)\big)\,dx, & \mbox{if } t>0.
\end{cases}
\]
Thanks to the assumption on $W$ and that $f$ is a strong solution, $I(t)$ is locally finite near $t = 0$, continuous at $0$, and continuously differentiable on $(0,+\infty)$ whenever it is finite.

Let us explain the idea in the rest of the proof.
We introduce the above functional mainly to study the size of the set
\beq
S_\Lam(t) := \{x\in \BT:\, f(x,t) > \Lambda/t^{1/2}\}
\label{eqn: def of the set S_Lam}
\eeq
for large $\Lam > 0$.
We will show that, one can suitably choose $W$, which will depend on the Lebesgue measure of $S_\Lam(t)$ (denoted by $|S_\Lam(t)|$), such that $I(t)$ stays bounded for all time.
This in turn provides an improved bound for $|S_\Lam(t)|$ for sufficiently large $\Lam$, which gives rise to an improved choice of $W$.
Then we bootstrap to find $|S_\Lam(t)| = 0$ for all $\Lam$'s that are sufficiently large.
\end{step}

\begin{step}[The growth of $I(t)$]
Fix $t \in (0,T]$.
By direct calculation and integration by parts,
\beqo
\begin{split}
\f{d}{dt}I (t)
= &\; \f{1}{2t^{3/2}} \int_\BT  W'\big(t^{1/2} f \big)\cdot t^{1/2} f - W(t^{1/2} f)\,dx\\
&\; - \f1{t} \int_\BT \left[ W'\big(t^{1/2} f \big) \cdot  t^{1/2} f - W\big(t^{1/2} f \big)\right]\cdot (-\D)^{\f12} (t^{1/2} f)  \,dx\\
&\; - \f2{t} \int_\BT W\big(t^{1/2} f \big) \cdot (-\D)^{\f12} (t^{1/2} f)  \,dx.
\end{split}
%\label{eqn: ODE for I(t)}
\eeqo
Denote $g(x) := t^{1/2}f(x,t)$ and $V^2(y) := W'(y)y-W(y)$.
Then this becomes
\beq
\begin{split}
\f{d}{dt}I (t)
= &\; \f{1}{2t^{3/2}} \int_\BT  V^2\big(g(x)\big)\,dx
 - \f1{t} \int_\BT V^2\big(g(x)\big)\cdot (-\D)^{\f12} g(x) \,dx\\
&\; - \f2{t} \int_\BT W\big(g(x) \big) \cdot (-\D)^{\f12} g(x) \,dx.
\end{split}
\label{eqn: ODE for I(t)}
\eeq
With $\Lam > 0$ to be determined,
we define
\[
g_\Lam(x) : = \min\{g(x),\Lambda\},
%g_{\lam,\Lam}(x) : = \max\left\{\min\{g(x),\Lambda\},\lam\right\},
\quad
\tilde{g}_\Lambda(x) : = \max\{g(x),\Lam\}.
\]
In what follows, we shall first take $V$ properly and then determine $W$.

Assume $V$ to be increasing. %$g_{\lam,\Lam} \geq \lam$, and $\tilde{g}_\Lam \geq \Lam$,
Then for the first term on the right-hand side of \eqref{eqn: ODE for I(t)}, we have
\beq
\begin{split}
\int_\BT V^2\big(g(x)\big)\,dx
=&\;\int_\BT \left[ V\big(g_\Lam(x)\big) + \big(V\big(\tilde{g}_\Lam(x)\big)-V(\Lam)\big)\right]^2\,dx\\
\leq &\;
2 \int_{\BT} V^2\big(g_\Lam(x)\big)\,dx
+ 2 \int_{\BT} \big(V\big(\tilde{g}_\Lam(x)\big)-V(\Lambda)\big)^2 \,dx.
\end{split}
\label{eqn: bound for the L^2 norm of V}
\eeq
Assume  %with some universal $C_0>0$ to be determined,
$V^2(y) \leq y^3$
%\label{eqn: assumption on V for small y}
for $y\in [0,\Lam]$.
Then %Thanks to \eqref{eqn: assumption on V for small y},
\beq
\int_{\BT} V^2\big(g_\Lam(x)\big)\,dx
\leq
\int_{\BT} \min\big\{t^{1/2} f(x,t),\Lam\big\}^3 \,dx
\leq
t^{\f32} \int_\BT f(x,t)^3\,dx.
\label{eqn: bound for integral of lower part of V}
\eeq
For the second term in \eqref{eqn: bound for the L^2 norm of V}, we first assume
\beq
\Lam \geq 2\|f_0\|_{L^1}^{\f12},
\label{eqn: largeness of Lambda}
\eeq
and then apply H\"{o}lder's inequality and interpolation (see e.g.\;\cite{kozono2008remarks}) to derive that
\beq
\begin{split}
&\; \int_{\BT} \big(V\big(\tilde{g}_\Lam(x)\big)-V(\Lambda)\big)^2 \,dx\\
\leq &\; \left\|V\big(\tilde{g}_\Lam(x)\big)-V(\Lambda) \right\|_{L^4(\BT)}^2
\big|\{x\in \BT:\, g(x,t) > \Lambda\}\big|^{\f12}\\
\leq &\; C\left\|V\big(\tilde{g}_\Lam(x)\big)-V(\Lambda) \right\|_{L^2(\BT)}
\left\|V\big(\tilde{g}_\Lam(x)\big)-V(\Lambda) \right\|_{\dot{H}^{\f12}(\BT)}
\big|S_\Lam(t)\big|^{\f12}.
\end{split}
\label{eqn: improved Poincare}
\eeq
%Recall that $S_\Lam(t)$ is defined in \eqref{eqn: def of the set S_Lam}.
Here we can use $H^{1/2}$-semi-norm because, for $\Lam \geq 2\|f_0\|_{L^1}^{1/2}$, $V(\tilde{g}_{\Lam}(x))-V(\Lam)$ is zero on a set of positive measure. %\textcolor{red}{
Indeed, this is because, when $t\leq \|f_0\|_{L^1}^{-1}$,
\[
\int_{\BT} g(x,t)\,dx = t^{\f12} \|f(\cdot,t)\|_{L^1} \leq \|f_0\|_{L^1}^{-\f12}\|f_0\|_{L^1}\leq \f12\Lam.
\]
Hence, \eqref{eqn: improved Poincare} gives
\beq
\int_{\BT} \big(V\big(\tilde{g}_\Lam(x)\big)-V(\Lambda)\big)^2 \,dx
\leq  C_1
\big\|V\big(\tilde{g}_\Lam(x)\big) \big\|_{\dot{H}^{\f12}(\BT)}^2
\big|S_\Lam(t)\big|,
\label{eqn: upper bound for large part in the L^2 norm}
\eeq
where $C_1>0$ is a universal constant.
Summarizing \eqref{eqn: bound for the L^2 norm of V}, \eqref{eqn: bound for integral of lower part of V}, and \eqref{eqn: upper bound for large part in the L^2 norm}, we obtain that
\beq
\f{1}{2t^{3/2}}\int_\BT V^2\big(g(x)\big)\,dx
\leq
\|f(x,t)\|_{L^3}^3
+\f{C_1}{t^{3/2}}
\big\|V\big(\tilde{g}_\Lam(x)\big) \big\|_{\dot{H}^{\f12}(\BT)}^2
\big|S_\Lam(t)\big|.
\label{eqn: upper bound for the L^2 norm term}
\eeq

Now we handle the second term on the right-hand side of \eqref{eqn: ODE for I(t)}.
Since $V^2$ is increasing,
%\textcolor{red}{(We need to change the formula for $(-\D)^{\f12}$ because we are now dealing with $\BT$)}
\[
\begin{split}
&\;\int_\BT V^2(g(x))\cdot (-\D)^{\f12} g(x)\,dx \\
%
%= &\; \f{1}{2\pi}\int_\BR \int_\BR \f{\big(V^2(g(x))-V^2(g(y))\big)\big(g(x)-g(y)\big)}{(x-y)^2}\,dx\,dy\\
%%
\geq &\; \f{1}{2\pi}\int_\BT \int_\BT \f{\big(V^2(\tilde{g}_\Lam(x))-V^2(\tilde{g}_\Lam(y))\big)\big(\tilde{g}_\Lam(x)-\tilde{g}_\Lam(y)\big)} {4\sin^2\big(\f{x-y}{2}\big)}\,dx\,dy\\
= &\; \f{1}{2\pi}\int_{\{\tilde{g}_\Lam(x) \neq  \tilde{g}_\Lam(y)\}} \f{\big(V(\tilde{g}_\Lam(x))+V(\tilde{g}_\Lam(y))\big)\big(\tilde{g}_\Lam(x)-\tilde{g}_\Lam(y)\big)} {V(\tilde{g}_\Lam(x))-V(\tilde{g}_\Lam(y))} \cdot \f{\big(V(\tilde{g}_\Lam(x))-V(\tilde{g}_\Lam(y))\big)^2}
{4\sin^2\big(\f{x-y}{2}\big)} \,dx\,dy.
\end{split}
\]
Suppose $V$ is increasing and convex on $[0,+\infty)$, such that for all $z>\Lam$ and all $t\in [0,T]$,
\[
\f{V'(z)}{V(z)} \leq
t^{\f12}\left(C_1
\big|S_\Lam(t)\big|\right)^{-1}.
\]
Then by the mean value theorem,
\[
\f{\big(V(\tilde{g}_\Lam(x))+V(\tilde{g}_\Lam(y))\big)\big(\tilde{g}_\Lam(x)-\tilde{g}_\Lam(y)\big)}{V(\tilde{g}_\Lam(x))-V(\tilde{g}_\Lam(y))}
\geq t^{-\f12}C_1
\big|S_\Lam(t)\big|.
\]
Therefore,
\beq
\f{1}{t}\int_\BT V^2(g(x))\cdot (-\D)^{\f12} g(x)\,dx
\geq  \f{C_1}{t^{3/2}}\big\|V(\tilde{g}_\Lam(x))\big\|_{\dot{H}^{\f12}(\BT)}^2
\big|S_\Lam(t)\big|.
\label{eqn: lower bound for fractional Sobolev norm}
\eeq

The last term on the right-hand side of \eqref{eqn: ODE for I(t)} is clearly non-positive, since $W$ is non-decreasing.
Hence, combining
\eqref{eqn: ODE for I(t)}, \eqref{eqn: upper bound for the L^2 norm term}, and \eqref{eqn: lower bound for fractional Sobolev norm} yields %$I'(t) \leq \|f(x,t)\|_{L^3}^3$.
\beqo
\f{d}{dt}I(t)
%\leq \|f(x,t)\|_{L^3}^3 - \f1{2t} \int_\BT W\big(g(x)\big) \cdot (-\D)^{\f12}g(x) \,dx
\leq \|f(x,t)\|_{L^3}^3.
\eeqo
Thanks to the energy estimate \eqref{eqn: energy estimate} and interpolation, for all $t>0$, %at least for a short period of time (up to $O(\|f_0\|_{L^1}^{-1})$).
%Recall that
%\[
%I(0) = \int_\BT f(x,0)\,dx = \|f_0\|_{L^1}.
%\]
\beq
I(t)\leq I(0)+ \int_0^t \|f(\cdot,\tau)\|_{L^3}^3\,d\tau \leq \|f_0\|_{L^1} + C\|f_0\|_{L^1}^2=:C_*.
\label{eqn: upper bound for I(t)}
\eeq
Note that $C_*$ is a constant only depending on $\|f_0\|_{L^1}$.
Actually, when deriving \eqref{eqn: upper bound for I(t)}, we first obtained
\[
I(t)\leq I(\d)+ \int_\d^t \|f(\cdot,\tau)\|_{L^3}^3\,d\tau,
\]
and then sent $\d \to 0^+$.
\end{step}

\begin{step}[The choice of $W$]
%Now we choose $W$ as follows.
Summarizing our assumptions on $V$, we need
\begin{itemize}
\item $V$ is increasing and convex on $[0,+\infty)$.

\item  For $y\in [0,\Lam]$, $V^2(y) \leq  y^3$.

\item For $y\in (\Lam,+\infty)$,
\beq
\f{V'(y)}{V(y)} \leq
\inf_{t\in [0,T]}
t^{\f12}\left(C_1
\big|S_\Lam(t)\big|\right)^{-1} = : \b.
\label{eqn: constraint on V for large y}
\eeq
%  \item $V(y)\leq C W(y)$ on $[0,\Lam]$ for some universal constant $C>0$.
\end{itemize}
We remark that $\b$ admits an a priori lower bound that only depends on $\Lam$ and $\|f_0\|_{L^1}$.
In fact, by the energy estimate \eqref{eqn: energy estimate} and interpolation,
\[
\|f\|_{L^4_t L^2} \leq C \|f_0\|_{L^1}^{\f34}.
\]
%Here and in what follows, $C(\|f_0\|_{L^1})$ denotes a constant only depending on $\|f_0\|_{L^1}$, which may change from line to line.
Since $\|f(\cdot,t)\|_{L^2}$  is non-increasing in time (see Lemma \ref{lem: energy bound and decay of L^p norms}),
this implies
\[
\|f(\cdot,t)\|_{L^2}\leq C\|f_0\|_{L^1}^\f34 t^{-\f14},
\]
which gives
\[
\big|S_\Lam(t)\big| =
\big|\big\{x\in \BT:\, f(x,t) > \Lambda/t^{1/2}\big\}\big|\leq C \|f_0\|_{L^1}^{\f32} t^{-\f12}\cdot \f{t}{\Lam^2}.
\]
Therefore,
\beq
\b= \inf_{t\in [0,T]}
t^{\f12}\left(C_1
\big|S_\Lam(t)\big|\right)^{-1} \geq C_2 \|f_0\|_{L^1}^{-\f32} \Lam^2,
\label{eqn: lower bound for exponential growth rate}
\eeq
where $C_2$ is a universal constant.

We define $V$ to be the unique continuous function on $[0,+\infty)$ that satisfies
\[
V(y) = y^{\f32} \mbox{ on }[0,\Lam],
\quad \mbox{and}\quad
V'(y) = \b V(y) \mbox{ on } y\in (\Lam, +\infty),
\]
i.e.,
\beq
V(y) =
\begin{cases}
  y^{3/2}, & \mbox{if } y\in [0,\Lam], \\
  \Lam^{3/2} e^{\b(y-\Lam)}, & \mbox{if } y\in [\Lam, +\infty).
\end{cases}
\label{eqn: formula for V}
\eeq
We shall suitably choose $\Lam$ in order to guarantee that $V$ is increasing and convex.
In fact, we only need $V'(\Lam^-)\leq V'(\Lam^+)$, i.e., $\f32 \Lam^{1/2} \leq \b \Lam^{3/2}$.
By virtue of \eqref{eqn: lower bound for exponential growth rate},
it is safe to take
\beq
\Lam \geq C_3\|f_0\|_{L^1}^{\f12},
\label{eqn: lower bound for admissible Lambda}
\eeq
where $C_3$ is a universal constant.
Without loss of generality, we may assume $C_3\geq 2$ so that \eqref{eqn: lower bound for admissible Lambda} is no weaker than \eqref{eqn: largeness of Lambda}.
%We may take the equality without loss of generality.

With this choice of $V$, we thus define $W$ by
\[
\f{d}{dy}\left(\f{W(y)}{y}\right) = \f{V^2(y)}{y^2},\quad \lim_{y\to 0^+}\f{W(y)}{y} = 1.
\]
For future use, we calculate it explicitly
\beq
W(y) =
\begin{cases}
  y+\f12 y^3, & \mbox{if } y\in [0,\Lam], \\
  y\left( 1+\f12\Lam^2 + \Lam^3 \int_\Lam^y  z^{-2} e^{2\b(z-\Lam)}\,dz \right), & \mbox{if } y\in [\Lam,+\infty).
\end{cases}
\label{eqn: formula for W}
\eeq
This resulting $W$ is convex on $[0,+\infty)$.
Indeed, on $(0,\Lam)\cup (\Lam,+\infty)$,
\[
W''(y) = \f1y \big(W'(y)y - W(y)\big)' = \f{2}{y}V(y)V'(y)\geq 0,
\]
and $W'(\Lam^-) = W'(\Lam^+)$ because $V^2(y)$ and $W(y)$ are continuous at $y = \Lam$.

By virtue of \eqref{eqn: upper bound for I(t)}, for any $W$ constructed in the above way, we have
\[
\int_\BT W\big(t^{1/2}f(x,t)\big)\,dx \leq C_* t^{\f12}.
\]
This implies, for any $\lam>0$,
\[
\big|S_\lam(t)\big| = \big|\big\{x\in \BT:\, f(x,t)\geq \lam/t^{1/2}\big\}\big|\leq C_* t^{\f12} W(\lam)^{-1},
\]
and thus,
\beq
\inf_{t\in [0,T]} t^{\f12} \left(C_1 \big|S_\lam (t) \big|\right)^{-1}\geq (C_1C_*)^{-1} W(\lam).
\label{eqn: new lower bound for beta}
\eeq
Here $C_1$ and $C_*$ are defined in \eqref{eqn: upper bound for large part in the L^2 norm} and \eqref{eqn: upper bound for I(t)}, respectively.
We will treat \eqref{eqn: new lower bound for beta} as an improvement of \eqref{eqn: lower bound for exponential growth rate}.
%In particular,
%\[
%\sup_{t\in [0,t_*]}\big|\big\{x\in \BT:\, f(x,t)\geq \Lam/t\big\}\big|\leq C_* t_* W(\Lam)^{-1}.
%\]
\end{step}

\begin{step}[Bootstrap]
Next we introduce a bootstrap argument.
Let $C_1$, $C_2$, and $C_3$ be the universal constants defined in \eqref{eqn: upper bound for large part in the L^2 norm}, \eqref{eqn: lower bound for exponential growth rate}, and \eqref{eqn: lower bound for admissible Lambda}, respectively.
With $C_0 \geq C_3\geq 2$ to be chosen, we let
\[
\Lam_0 = C_0 \|f_0\|_{L^1}^{\f12},
\quad
\b_0 = C_2 \|f_0\|_{L^1}^{-\f32} \Lam_0^2 = C_2 C_0^2 \|f_0\|_{L^1}^{-\f12}.
\]
Let $W_0$ be defined by \eqref{eqn: formula for W}, with $\Lam$ and $\b$ there replaced by $\Lam_0$ and $\b_0$.
For $j \in \BZ_+\cup \{\infty\}$, let
\[
\Lam_j = \left(C_0+\sum_{k = 1}^j k^{-2}\right) \|f_0\|_{L^1}^{\f12}.
\]
In the view of \eqref{eqn: new lower bound for beta} and also \eqref{eqn: constraint on V for large y}, iteratively define
\beq
\b_j = (C_1C_*)^{-1} W_{j-1}(\Lam_j),
\label{eqn: def of beta_j}
\eeq
while $W_j$ is then constructed by \eqref{eqn: formula for W}, with $\Lam$ and $\b$ replaced by $\Lam_j$ and $\b_j$.
%Recall that $C_1$ is the universal constant defined in \eqref{eqn: upper bound for large part in the L^2 norm}.

Observe that $\{\Lam_j\}_j$ is increasing, and
\beq
C_0 \|f_0\|_{L^1}^\f12 \leq \Lam_j\leq C_0'\|f_0\|_{L^1}^\f12,
\label{eqn: upper and lower bound for Lambda_j}
\eeq
where $C_0'/C_0$ is bounded by some universal constant.
With $C_0$ being suitably large, we claim that, for all $j\in \BZ_+$,
\beq
\b_{j-1}\geq j^3 \|f_0\|_{L^1}^{-\f12}.
\label{eqn: lower bound for beta_j}
\eeq
We prove this by induction.
First, choose $C_0\geq \max\{C_2^{-1/2},C_3\}$, so that \eqref{eqn: lower bound for beta_j} holds for $j = 1$.
For $j\geq 2$, by the definitions of $\Lam_j$ and $W_j$,
we derive by \eqref{eqn: formula for W} that for $j\in \BZ_+$,
\[
%\begin{split}
W_{j-1}(\Lam_j) \geq %&\; \Lam_j\left( 1+\f12\Lam_{j-1}^2 + \Lam_{j-1}^3 \int_{\Lam_{j-1}}^{\Lam_j}  z^{-2} e^{2\b_{j-1}(z-\Lam_{j-1})}\,dz \right)\\
%%
%\geq &\;
\Lam_j\left( 1+\f12\Lam_{j-1}^2 + \f{\Lam_{j-1}^3 }{\Lam_j^2} \int_{0}^{\Lam_j-\Lam_{j-1}} e^{2\b_{j-1}z}\,dz \right).
%
%\geq &\; \Lam_j\left( 1+\f12\Lam_{j-1}^2 + \f{\Lam_{j-1}^3 }{\Lam_j^2} \cdot \f{e^{2\b_{j-1}(\Lam_j-\Lam_{j-1})}-1}{2\b_{j-1}} \right)\\
%\end{split}
\]
By \eqref{eqn: def of beta_j} and \eqref{eqn: upper and lower bound for Lambda_j}, and using the definition of $C_*$ in \eqref{eqn: upper bound for I(t)}, we obtain that
\[
\begin{split}
\b_j \geq &\; (C_1C_*)^{-1} \cdot \f{\Lam_{j-1}^3}{\Lam_j} \cdot \f{e^{2\b_{j-1}j^{-2}\|f_0\|_{L^1}^{1/2}}-1}{2\b_{j-1}}\\
\geq &\; \f{C(C_0\|f_0\|_{L^1}^{1/2})^2 }{\|f_0\|_{L^1} + C\|f_0\|_{L^1}^2} \cdot  j^{-2}\|f_0\|_{L^1}^{1/2} \cdot \f{e^{2\b_{j-1}j^{-2}\|f_0\|_{L^1}^{1/2}}-1}{2\b_{j-1}j^{-2}\|f_0\|_{L^1}^{1/2}},
\end{split}
\]
where $C$'s are universal constants.
Using the induction hypothesis \eqref{eqn: lower bound for beta_j} and the assumption $\|f_0\|_{L^1}\geq 1$,
\[
\b_j \geq  CC_0^2 \cdot j^{-2} \cdot \f{e^{2 j}-1}{2 j} \|f_0\|_{L^1}^{-1/2}
\geq  CC_0^2 (j+1)^3\|f_0\|_{L^1}^{-1/2},
\]
where $C$ is a universal constant.
In the second inequality, we used the trivial fact that for all $j\in \BZ_+$,
$\f{e^{2 j}-1}{2 j}\geq Cj^2(j+1)^3$ for some universal $C>0$.
Now we choose $C_0$ to be suitably large but universal, so that $CC_0^2\geq 1$ in the above estimate.
This yields \eqref{eqn: lower bound for beta_j} with $j$ replaced by $(j+1)$.
Therefore, by induction, \eqref{eqn: lower bound for beta_j} holds for all $j\in \BZ_+$.

Using \eqref{eqn: new lower bound for beta} and \eqref{eqn: def of beta_j}, we find for all $j\in \BZ_+$,
\beqo
\sup_{t\in [0,T]} t^{-\f12} \cdot C_1 \big|\big\{x\in \BT:\, f(x,t)\geq \Lam_\infty /t^{1/2}\big\}\big| \leq \b_j^{-1}.
\eeqo
By \eqref{eqn: lower bound for beta_j} and the arbitrariness of $j$, we conclude that for all $t\in [0,T]$,
\[
f(\cdot,t)\leq \Lam_\infty t^{-\f12}\quad \mbox{a.e.\;on }\BT.
\]
Since
\[
\Lam_\infty = \left(C_0+\sum_{k = 1}^\infty k^{-2}\right) \|f_0\|_{L^1}^{\f12},  %\leq C \|f_0\|_{L^1}^{1/2},
\]
where $C_0>0$ is universal, the desired estimate follows.
\end{step}
\end{proof}
\end{lem}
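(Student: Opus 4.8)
The plan is to establish this ``$L^1\to L^\infty$'' smoothing bound by a De Giorgi--Nash--Moser-type truncation argument adapted to the degenerate fractional diffusion in \eqref{eqn: the main equation}. Since $f_\lambda(x,t):=\lambda f(x,\lambda t)$ is again a solution (Remark~\ref{rmk: scaling}) and the asserted inequality is invariant under this rescaling, I would first normalize $\|f_0\|_{L^1}=1$ and reduce to proving $\|f(\cdot,t)\|_{L^\infty(\BT)}\le Ct^{-1/2}$ for $t\in(0,1]$; the regime $t>\|f_0\|_{L^1}^{-1}$ then follows from the maximum principle of Lemma~\ref{lem: max principle}, which makes $\|f(\cdot,t)\|_{L^\infty}$ non-increasing. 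The raw input is Lemma~\ref{lem: energy bound and decay of L^p norms}: the energy relation places $f$ in $L^2_t\dot{H}^{1/2}_x$ over an $L^\infty_tL^1_x$ background, so Gagliardo--Nirenberg interpolation in $x$ combined with H\"older in $t$ gives $f\in L^4_tL^2_x$ with norm $\lesssim\|f_0\|_{L^1}^{3/4}$, and, since $\|f(\cdot,t)\|_{L^2}$ is non-increasing, this self-improves to the pointwise decay $\|f(\cdot,t)\|_{L^2}\lesssim t^{-1/4}$; by Chebyshev, the super-level sets then satisfy $|S_\Lambda(t)|:=|\{f(\cdot,t)>\Lambda t^{-1/2}\}|\lesssim\Lambda^{-2}\sqrt t$. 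This bound is the seed for the iteration.

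For the iteration I would test \eqref{eqn: the main equation} against $2(f-k)_+$ at a large cut-off level $k$. The key algebraic fact is that the drift and the diffusion recombine: since $\partial_x(\mathcal{H}f)=(-\Delta)^{1/2}f$ by \eqref{eqn: fractional Laplacian}, integration by parts gives $2\int\mathcal{H}f\,\partial_x f\,(f-k)_+=-\int(-\Delta)^{1/2}f\cdot(f-k)_+^2$, which merges with $-2\int f\,(f-k)_+\,(-\Delta)^{1/2}f$ into $-\int(-\Delta)^{1/2}f\cdot G_k(f)\,dx$ with $G_k(s)=(s-k)_+^2+2s(s-k)_+$ non-decreasing. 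Symmetrizing this non-local quantity (a C\'ordoba--C\'ordoba-type argument) and using the elementary pointwise bound $(f(x)-f(y))\big(G_k(f(x))-G_k(f(y))\big)\ge 2k\big((f-k)_+(x)-(f-k)_+(y)\big)^2$, one gets $\frac{d}{dt}\int(f-k)_+^2\,dx\le -c\,k\,\|(f-k)_+\|_{\dot{H}^{1/2}(\BT)}^2$ --- the extra factor $k$ being exactly the place where the nominally degenerate diffusion is recognized as \emph{strong} on the high-value set $\{f>k\}$. Feeding this dissipation into the interpolation $\|g\|_{L^4(\BT)}^2\lesssim\|g\|_{L^2}\|g\|_{\dot{H}^{1/2}}$ together with the Chebyshev bound $|\{f>k_n\}|\le(k_n-k_{n-1})^{-2}\int(f-k_{n-1})_+^2$, I would run the standard nonlinear recursion on truncated energies $\int(f-k_n)_+^2$ localized to an increasing sequence of times $T_n\uparrow$ and levels $k_n\uparrow K$, getting $A_n\le M^nA_{n-1}^{1+\beta}$; the seed estimate makes $A_0$ small once $k_0$ is a suitable multiple of $\|f_0\|_{L^1}^{1/2}$, and tracking the parabolic scaling through the $T_n$'s forces $A_\infty=0$, i.e.\ $f(\cdot,t)\le Kt^{-1/2}$. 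Equivalently --- and this is how I would streamline the exposition --- one can replace the discrete family $(f-k_n)_+$ by a single convex weight $W$ and monitor $I(t)=t^{-1/2}\int_{\BT}W(t^{1/2}f(x,t))\,dx$, choosing $W$ so that the forcing coming from the explicit time-weights is dominated by $\|f(\cdot,t)\|_{L^3}^3$, which is time-integrable by the energy relation, and then bootstrapping on the growth of $W$ to kill all high super-level sets.

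The step I expect to be the real obstacle is making the degeneracy harmless. The diffusivity $f$ vanishes where $f$ is small, so there is no uniform spectral gap and the Moser gain is a priori not available. The resolution is structural and specific to the $L^\infty$ bound: the estimate is governed entirely by the top of the distribution of $f$, the truncations $(f-k_n)_+$ are supported in $\{f\ge k_n\}$, and there $f\ge k_n$ is \emph{large}, so the dissipation one extracts is in fact better than in the uniformly elliptic case. The degeneracy is genuinely damaging elsewhere --- for the lower bound and for the H\"older estimates, cf.\ Lemma~\ref{lem: lower bound} and Section~\ref{sec: estimates for h} --- but it does not bite here. A secondary, purely technical point is bookkeeping: one must thread the exponents in the recursion (or, in the functional formulation, the growth rate of $W$ against the smallness of $|S_\Lambda(t)|$) so as to recover the \emph{sharp} exponent $t^{-1/2}$, rather than merely some negative power of $t$; this exponent is forced by the scaling $f_\lambda(x,t)=\lambda f(x,\lambda t)$ and by the dimensions in the energy relation.
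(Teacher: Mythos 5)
Your proposal is correct and in fact presents two routes: a De Giorgi--Nash--Moser truncation as the primary plan, and the paper's $W$-weight functional $I(t)=t^{-1/2}\int_\BT W(t^{1/2}f(x,t))\,dx$ as the ``streamlined'' version, which is precisely what the paper uses. The seed estimate ($\|f(\cdot,t)\|_{L^2}\lesssim t^{-1/4}\|f_0\|_{L^1}^{3/4}$, hence $|S_\Lambda(t)|\lesssim \Lambda^{-2}\sqrt{t}\,\|f_0\|_{L^1}^{3/2}$) is identical to the paper's \eqref{eqn: lower bound for exponential growth rate}. Your explicit recombination of drift and diffusion into $-\int(-\Delta)^{1/2}f\cdot G_k(f)$ with $G_k(s)=(s-k)_+^2+2s(s-k)_+$, followed by symmetrization and the pointwise bound $(a-b)\bigl(G_k(a)-G_k(b)\bigr)\ge 2k\bigl((a-k)_+-(b-k)_+\bigr)^2$, correctly isolates the dissipation $\sim k\,\|(f-k)_+\|_{\dot{H}^{1/2}}^2$ and makes transparent that the diffusion is enhanced on the super-level set $\{f>k\}$; the paper encodes the same mechanism abstractly through the choices $V(y)=y^{3/2}$ on $[0,\Lambda]$ and the growth constraint $V'/V\le\beta$ above $\Lambda$. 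Since the paper states it provides ``a different argument'' than the cut-off iteration of Caffarelli--Vasseur type, your primary route is the acknowledged alternative, while your closing sentences reproduce the paper's actual method. One caveat on the claimed equivalence: in the paper's functional, the explicit $t^{1/2}$-weight produces the positive forcing term $\frac{1}{2t^{3/2}}\int V^2(g)\,dx$ in $\frac{d}{dt}I(t)$, which must be split into a low part (dominated by the time-integrable $\|f\|_{L^3}^3$) and a high part (absorbed by the enhanced dissipation) --- this is exactly where the specific form of $V$ and the bootstrap on $\Lambda_j,\beta_j$ live. A De Giorgi iteration with time-independent levels on nested intervals $[T_n,T]$ sidesteps this forcing but then needs a rescaling argument to convert a fixed-level $L^\infty$ bound on $[T/2,T]$ into the $t^{-1/2}$ rate, while time-dependent levels $\kappa t^{-1/2}$ reintroduce an analogous forcing from the time derivative of the level. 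The two constructions share their structural ingredients but are not mechanically interchangeable, and the paper's global-in-time functional avoids the nested-interval bookkeeping entirely.
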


\section{Smoothing and Decay Estimates}
\label{sec: smoothing and decay}

\subsection{Higher regularity with time integrability at $t = 0$}
\label{sec: estimates for h}
With the estimates established so far, we are still not able to make sense of the integral
\[
\int_{\BT\times [0,T)} \pa_x \va \cdot \CH f \cdot F\,dx\,dt
\]
in the weak formulation \eqref{eqn: weak formulation}.
Indeed, with initial data $f_0, F_0\in L^1(\BT)$, we only have
\[
f \in L^\infty_T L^1(\BT)\cap L^2_T H^{\f12}(\BT)\cap L^{2^-}_T L^\infty(\BT),\mbox{ and }F\in L^\infty_T L^1(\BT),
\]
whereas $H^{1/2}(\BT)\not \hookrightarrow L^\infty(\BT)$, and the Hilbert transform is not bounded in $L^1(\BT)$ either.
A natural idea to resolve this integrability issue is to prove that $f$ has some spatial regularity higher than $H^{1/2}$, with the corresponding norm being time-integrable.
This seems not trivial because of the nonlinearity and degeneracy of the equation \eqref{eqn: the main equation}.
In particular, the very bad lower bound for $f$ in Lemma \ref{lem: lower bound} indicates extremely weak smoothing effect in some part of $\BT$, so the higher-order norms of $f$ could be terribly singular near $t = 0$.
Fortunately, %$\ln f$ enjoys neat estimates.
%Based on that,
by studying the special properties of \eqref{eqn: the main equation},
we can prove that certain H\"{o}lder norms of $f$ are time-integrable near $t=0$ (see Corollary \ref{cor: bound for H1 norm}).
This is the theme of this subsection.

For convenience, we assume $f$ to be a positive smooth solution to \eqref{eqn: the main equation} on $\BT\times [0,+\infty)$.
Define $h:= \ln f$.
%In this section, we will prove a few nice a priori estimates regarding $h$.
%
Clearly, $h$ is a smooth solution to
\beq
\pa_t h = \CH f \cdot \pa_x h - (-\D)^{\f12} f.
\label{eqn: equation for log f}
\eeq

We start from the entropy estimate for $F$. %, which is a positive smooth solution to \eqref{eqn: equation for big F}.
\begin{lem}
\label{lem: entropy estimate}
\[
\f{d}{dt}\int_\BT F \ln F\,dx + \|h \|_{\dot{H}^{\f12}}^2\leq 0.
\]
\begin{proof}
We derive with \eqref{eqn: equation for big F} that
\beq
\begin{split}
\f{d}{dt}\int_\BT F \ln F\,dx = &\; \int_\BT (1+\ln F)\pa_t F\,dx
= -\int_\BT \pa_x (1+\ln F)\cdot \CH f\cdot  F\,dx\\
= &\; -\int_\BT \CH f\cdot  \pa_x F\,dx
= \int_\BT F (-\D)^{\f12} f\,dx.
\end{split}
\label{eqn: entropy estimate original}
\eeq
The right-hand side can be written as
\[
\begin{split}
\int_\BT F (-\D)^{\f12} f\,dx
= &\; \int_\BT F(x)\cdot \f{1}{\pi}\pv \int_\BT \f{f(x)-f(y)}{4\sin^2(\f{x-y}{2})}\,dy\,dx\\
= &\; -\f{1}{2\pi} \int_{\BT\times \BT } \f{(F(x)-F(y))(f(y)-f(x))}{4\sin^2(\f{x-y}{2})}\,dy\,dx.
%\leq 0.
\end{split}
\]
Since
\[
\begin{split}
(F(x)-F(y))(f(y)-f(x)) = &\; \f{(F(x)-F(y))^2}{F(x)F(y)}
= \left(\sqrt{\f{F(x)}{F(y)}} - \sqrt{\f{F(y)}{F(x)}}\right)^2\\
\geq &\; \left(2\ln \sqrt{\f{F(x)}{F(y)}}\right)^2 = \big(\ln F(x) - \ln F(y)\big)^2,
\end{split}
\]
we obtain the desired estimate.
\end{proof}
\end{lem}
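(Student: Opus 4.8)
The plan is to test the conservation law \eqref{eqn: equation for big F} against $1+\ln F$. Writing $\f{d}{dt}\int_\BT F\ln F\,dx=\int_\BT(1+\ln F)\,\pa_t F\,dx$, substituting $\pa_t F=\pa_x(\CH f\cdot F)$, and integrating by parts so the derivative lands on $1+\ln F$, the constant term disappears and $\pa_x\ln F=F^{-1}\pa_x F$ cancels the factor $F$, leaving $-\int_\BT\CH f\cdot\pa_x F\,dx$. A further integration by parts, combined with the commutation $\pa_x\CH=\CH\pa_x$ and the identity $(-\D)^{\f12}=\CH\pa_x$ from \eqref{eqn: fractional Laplacian}, turns this into $\int_\BT F\,(-\D)^{\f12}f\,dx$. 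The lemma therefore reduces to the estimate $\int_\BT F\,(-\D)^{\f12}f\,dx\le-\|h\|_{\dot H^{1/2}}^2$.

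To prove that, I would insert the singular-integral representation \eqref{eqn: fractional Laplacian} of $(-\D)^{\f12}f$ and symmetrize in $x$ and $y$ (legitimate since $f$, hence $F=1/f$ and $h=\ln f$, is smooth, so the kernel singularity is absorbed by the difference quotients), obtaining
\[
\int_\BT F\,(-\D)^{\f12}f\,dx=-\f1{2\pi}\int_\BT\int_\BT\f{(F(x)-F(y))(f(y)-f(x))}{4\sin^2\big(\f{x-y}{2}\big)}\,dx\,dy.
\]
The algebraic heart of the matter is the pointwise identity, valid because $F=1/f$,
\[
(F(x)-F(y))(f(y)-f(x))=\f{(F(x)-F(y))^2}{F(x)F(y)}=\left(\sqrt{\tfrac{F(x)}{F(y)}}-\sqrt{\tfrac{F(y)}{F(x)}}\right)^{2},
\]
which already exhibits the integrand as nonnegative (consistent with $f$ and $1/f$ being oppositely monotone), so $\f{d}{dt}\int_\BT F\ln F\,dx\le0$ unconditionally.

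To upgrade this $0$ to $-\|h\|_{\dot H^{1/2}}^2$, I would invoke the elementary inequality $\big(\sqrt{a/b}-\sqrt{b/a}\big)^2\ge\big(\ln(a/b)\big)^2$ for all $a,b>0$; with $t=\sqrt{a/b}$ it reads $|t-t^{-1}|\ge2|\ln t|$, which holds because $t-t^{-1}-2\ln t$ vanishes at $t=1$ and has derivative $(1-t^{-1})^2\ge0$. Taking $a=F(x)$, $b=F(y)$ and recalling $\ln F=-h$ yields $\left(\sqrt{\tfrac{F(x)}{F(y)}}-\sqrt{\tfrac{F(y)}{F(x)}}\right)^2\ge(h(x)-h(y))^2$. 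Substituting this lower bound into the symmetrized double integral and recognizing $\int_\BT\int_\BT\f{(h(x)-h(y))^2}{4\sin^2(\f{x-y}{2})}\,dx\,dy$ as a positive multiple of the $\dot H^{1/2}$ Dirichlet form $\int_\BT h\,(-\D)^{\f12}h\,dx$ — and checking the constant is large enough to absorb the $\f1{2\pi}$ — closes the proof.

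Since $f$ is assumed smooth, there are no real regularity or integrability obstructions, and the integration by parts and symmetrization are routine. The only genuine ingredient is the observation that the $F$-quadratic form produced by the conservation law dominates the $\ln f$ Dirichlet form via the one-variable inequality $|t-t^{-1}|\ge2|\ln t|$; the sole mild nuisance is keeping the $2\pi$-type normalization constants straight between the kernel in \eqref{eqn: fractional Laplacian} and the definition of $\|\cdot\|_{\dot H^{1/2}}$.
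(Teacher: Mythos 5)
Your argument reproduces the paper's proof step for step: test the conservation law for $F$ against $1+\ln F$, integrate by parts to reach $\int_\BT F\,(-\D)^{\f12}f\,dx$, symmetrize the kernel, use $F=1/f$ to write the integrand as $\big(\sqrt{F(x)/F(y)}-\sqrt{F(y)/F(x)}\big)^2$, and bound it below by $(h(x)-h(y))^2$ via $|t-t^{-1}|\ge 2|\ln t|$. The only additions are a spelled-out proof of that one-variable inequality and the passing remark about unconditional sign, neither of which changes the route; the constants also match exactly the convention $\|h\|_{\dot H^{1/2}}^2=\f{1}{2\pi}\iint\f{(h(x)-h(y))^2}{4\sin^2((x-y)/2)}\,dx\,dy$, so no "absorption" is needed.
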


Then we state an $H^{1/2}$-estimate for $h$.
\begin{lem}
\label{lem: H 1/2 estimate for h}
\[
\f{d}{dt}\|h\|_{\dot{H}^{\f12}}^2 + 4\big\|\sqrt{f}\big\|_{\dot{H}^1}^2
+ \int_\BT  f \big((-\D)^{\f12} h\big)^2\,dx = 0.
\]
\begin{proof}

%For a function $u$ defined on $\BT$, denote its mean by
%\[
%\bar{u}: = \f1{2\pi}\int_\BT u(x)\,dx.
%\]
%Applying the Cotlar's identity to the mean-zero function $u-\bar{u}$ on $\BT$ yields
%\[
%(\CH u)^2 - \big(u-\bar{u}\big)^2 = 2\CH\big((u-\bar{u})\CH u\big).
%\]
%Hence,
%\beq
%(\CH u)^2 - u^2 -\bar{u}^2  = 2\CH (u\CH u ).
%\label{eqn: Cotlar}
%\eeq

Recall that for a mean-zero smooth function $u$ defined on $\BT$, we have the Cotlar's identity
\beq
(\CH u)^2 - u^2 = 2\CH(u\CH u).
\label{eqn: Cotlar}
\eeq
We calculate with \eqref{eqn: equation for log f} and \eqref{eqn: Cotlar} that
\beq
\begin{split}
\f{d}{dt}\|h\|_{\dot{H}^{\f12}}^2
= &\; 2\int_\BT \pa_t h \cdot (-\D)^{\f12} h\,dx \\
= &\; -2\int_\BT  f \cdot \CH\big(\pa_x h (-\D)^{\f12} h\big)\,dx
-2\int_\BT (-\D)^{\f12}f \cdot (-\D)^{\f12} h\,dx \\
= &\; \int_\BT  e^h \Big[ \big(\pa_x h \big)^2 - \big((-\D)^{\f12} h\big)^2\Big]\,dx
- 2\int_\BT \pa_x \big(e^h\big) \cdot \pa_x h\,dx \\
= &\;  -\int_\BT  f \Big[ \big(\pa_x h \big)^2 + \big((-\D)^{\f12} h\big)^2\Big]\,dx.
\end{split}
\label{eqn: H 1/2 estimate for h derivation}
\eeq
Observing that $f(\pa_x h)^2 = 4(\pa_x \sqrt{f})^2$,
%\[
%f(\pa_x h)^2 = \f{(\pa_x f)^2}{f} = 4(\pa_x \sqrt{f})^2,
%\]
we obtain the desired result.
\end{proof}
\end{lem}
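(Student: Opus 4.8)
The plan is to differentiate the squared seminorm $\|h\|_{\dot{H}^{\f12}}^2 = \int_\BT h\,(-\D)^{\f12}h\,dx$ in time, substitute the evolution equation \eqref{eqn: equation for log f} for $\pa_t h$, and then collapse the resulting quadratic expressions using just two tools: the skew-adjointness of $\CH$ on $L^2(\BT)$, and Cotlar's identity $(\CH u)^2 - u^2 = 2\CH(u\,\CH u)$ valid for mean-zero smooth $u$ on $\BT$. Since $f$ is assumed to be a positive smooth solution, differentiation under the integral sign, integration by parts, and moving $\CH$ across an inner product are all justified, so the whole argument is an algebraic rearrangement.

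First I would write $\f{d}{dt}\|h\|_{\dot{H}^{\f12}}^2 = 2\int_\BT \pa_t h\cdot(-\D)^{\f12}h\,dx$ and plug in $\pa_t h = \CH f\cdot\pa_x h - (-\D)^{\f12}f$, which produces a drift term $2\int_\BT (\CH f)(\pa_x h)(-\D)^{\f12}h\,dx$ and a diffusion term $-2\int_\BT (-\D)^{\f12}f\cdot(-\D)^{\f12}h\,dx$. The diffusion term is the easy one: writing $(-\D)^{\f12}g = \CH\pa_x g$ and using that $\CH$ is an isometry on mean-zero functions (note $\pa_x f$ and $\pa_x h$ have vanishing mean on $\BT$), it equals $-2\int_\BT \pa_x f\cdot\pa_x h\,dx = -2\int_\BT \pa_x(e^h)\,\pa_x h\,dx = -2\int_\BT f(\pa_x h)^2\,dx$.

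The crux is the drift term. Since $(-\D)^{\f12}h = \CH\pa_x h$ is mean-zero, I would use skew-adjointness to peel the Hilbert transform off $f$, obtaining $-2\int_\BT f\,\CH\big(\pa_x h\cdot(-\D)^{\f12}h\big)\,dx$; then Cotlar's identity applied with $u = \pa_x h$ (so $\CH u = (-\D)^{\f12}h$) gives $\CH\big(\pa_x h\cdot(-\D)^{\f12}h\big) = \f12\big(((-\D)^{\f12}h)^2 - (\pa_x h)^2\big)$, turning the drift term into $\int_\BT f(\pa_x h)^2\,dx - \int_\BT f\big((-\D)^{\f12}h\big)^2\,dx$. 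Adding the drift and diffusion contributions cancels one copy of $\int_\BT f(\pa_x h)^2\,dx$ and leaves $\f{d}{dt}\|h\|_{\dot{H}^{\f12}}^2 = -\int_\BT f(\pa_x h)^2\,dx - \int_\BT f\big((-\D)^{\f12}h\big)^2\,dx$. Finally, $\pa_x\sqrt{f} = \f12\sqrt{f}\,\pa_x h$ yields $f(\pa_x h)^2 = 4(\pa_x\sqrt{f})^2$, i.e.\ $\int_\BT f(\pa_x h)^2\,dx = 4\|\sqrt{f}\|_{\dot{H}^1}^2$, and rearranging gives the claimed identity.

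The main obstacle --- though it is short once seen --- is the recognition that one should move $\CH$ off $f$ by skew-adjointness in order to expose the structure $\CH\big(\pa_x h\cdot\CH\pa_x h\big)$ to which Cotlar's identity applies; without this, the drift term looks genuinely cubic and unmanageable. The rest is careful bookkeeping of signs, checking the mean-zero hypotheses that make Cotlar's identity and the isometry property of $\CH$ legitimate, and the elementary computation relating $f(\pa_x h)^2$ to $(\pa_x\sqrt{f})^2$.
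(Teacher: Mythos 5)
Your proposal is correct and follows exactly the same route as the paper: differentiate the squared $\dot{H}^{1/2}$-seminorm, plug in the $h$-equation, use skew-adjointness of $\CH$ to move it off $f$, apply Cotlar's identity with $u=\pa_x h$ to turn the drift term into $\int_\BT f\,[(\pa_x h)^2-((-\D)^{1/2}h)^2]\,dx$, handle the diffusion term via $(-\D)^{1/2}=\CH\pa_x$ and the $\CH$-isometry, and finish with $f(\pa_x h)^2 = 4(\pa_x\sqrt{f})^2$. This matches the paper's derivation \eqref{eqn: H 1/2 estimate for h derivation} step for step.
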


We further show $\|\sqrt{f}\|_{\dot{H}^1}$ satisfies another simple estimate, which implies it is non-increasing as well.

\begin{lem}
\label{lem: H 1 estimate for sqrt f}
\[
\f{d}{dt}\big\| \sqrt{f}\big\|_{\dot{H}^1}^2 + \f12 \|f\|_{\dot{H}^{\f32}}^2 = 0.
\]

\begin{proof}
Denoting $\om = f$ and $u = -\CH f$, we recast \eqref{eqn: the main equation} as %in \eqref{eqn: recast as De Gregorio}
\[
\pa_t \om +u \pa_x \om = \om \pa_x u.
\]
Note that in order to keep $\om$ positive, here we take a different change of variable from that in \eqref{eqn: recast as De Gregorio}.
Hence,
\[
\pa_t \pa_x \sqrt{\om} + u \pa_{xx} \sqrt{\om} + \f12 \pa_x u \pa_x \sqrt{\om}
= \f12 \sqrt{\om} \pa_{xx} u.
\]
Then we derive that
\[
\begin{split}
\f12 \f{d}{dt}\big\|\sqrt{w}\big\|_{\dot{H}^1}^2 = &\;
\int_\BT - u \pa_{xx} \sqrt{\om} \pa_x \sqrt{\om}
- \f12 \pa_x u \pa_x \sqrt{\om} \pa_x \sqrt{\om}
+ \f12 \sqrt{\om} \pa_x \sqrt{\om} \pa_{xx} u \,dx \\
= &\;
\f12 \int_\BT - u \pa_{x} \big( \pa_x \sqrt{\om}\big)^2
- \pa_x u \big(\pa_x \sqrt{\om}\big)^2 + \f12 \pa_x \om \pa_{xx} u\,dx \\
= &\;
\f14 \int_\BT \pa_x \om \pa_{xx} u\,dx.
\end{split}
\]
Plugging in $u = -\CH \om$ yields the desired result.
\end{proof}

\begin{rmk}
For the De Gregorio model \eqref{eqn: De Gregorio}, non-negative solutions $\om = \om(x,t)$ that are sufficiently smooth are known to enjoy conservation of $\|\sqrt{\om}\|_{\dot{H}^1}$ \cite{lei2020constantin}.
Interestingly, we can recover that result by following the derivation above and plugging in $u_x = \CH \om$ (cf.\;\eqref{eqn: De Gregorio}) in the last line.
\end{rmk}
\end{lem}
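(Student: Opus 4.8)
The plan is to reuse the De Gregorio–type form of \eqref{eqn: the main equation} exploited in Lemma~\ref{lem: H 1/2 estimate for h}, but with a sign-adapted change of variables so that the evolving unknown stays positive (cf.\;\eqref{eqn: recast as De Gregorio}, with a sign flip). Set $\om := f$ and $u := -\CH f$. Since $(-\D)^{\f12}f = \CH\pa_x f$ by \eqref{eqn: fractional Laplacian}, one has $\pa_x u = -(-\D)^{\f12}\om$, and \eqref{eqn: the main equation} becomes the transport–stretching equation $\pa_t\om + u\,\pa_x\om = \om\,\pa_x u$, a De Gregorio-type equation (cf.\;\eqref{eqn: De Gregorio}) but with transport coefficient $-\CH\om$ instead of $\CH\om$. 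Because $f$ is a positive smooth solution, $\phi := \sqrt{f} = \sqrt{\om}$ is smooth and strictly positive; inserting $\om = \phi^2$ and dividing by $2\phi$ gives the companion equation $\pa_t\phi + u\,\pa_x\phi = \f12\,\phi\,\pa_x u$.

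Next I would differentiate this identity in $x$, obtaining $\pa_t\pa_x\phi + u\,\pa_{xx}\phi + \f12\,\pa_x u\,\pa_x\phi = \f12\,\phi\,\pa_{xx}u$, and test it against $\pa_x\phi$ to compute $\f12\f{d}{dt}\|\pa_x\phi\|_{L^2(\BT)}^2$. The point is a cancellation among the first-order terms: integrating by parts on $\BT$, $\int_\BT -u\,\pa_{xx}\phi\,\pa_x\phi\,dx = \f12\int_\BT \pa_x u\,(\pa_x\phi)^2\,dx$, which exactly cancels the contribution $-\f12\int_\BT \pa_x u\,(\pa_x\phi)^2\,dx$ of the term $\f12\,\pa_x u\,\pa_x\phi$. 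This is the same algebraic mechanism by which the De Gregorio model \eqref{eqn: De Gregorio} conserves $\|\sqrt{\om}\|_{\dot{H}^1}$; here the transporting velocity is different, so a nontrivial right-hand side survives. Using $\phi\,\pa_x\phi = \f12\,\pa_x\om$, what remains is $\f{d}{dt}\|\pa_x\phi\|_{L^2}^2 = \f12\int_\BT \pa_x\om\,\pa_{xx}u\,dx$.

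Finally I would substitute $u = -\CH\om$ back in. Since $\CH$ is skew-adjoint on mean-zero functions, commutes with $\pa_x$, and satisfies $\CH\pa_x\om = (-\D)^{\f12}\om$, the remaining integral becomes $-\int_\BT \pa_x\om\,\CH\pa_{xx}\om\,dx = \int_\BT \CH\pa_x\om\,\pa_{xx}\om\,dx$, which a one-line Fourier computation (symbol $|k|\cdot(-k^2) = -|k|^3$) identifies with $-\|\om\|_{\dot{H}^{3/2}}^2 = -\|f\|_{\dot{H}^{3/2}}^2$. Collecting factors, $\f{d}{dt}\|\sqrt{f}\|_{\dot{H}^1}^2 = -\f12\|f\|_{\dot{H}^{3/2}}^2$, which is the asserted identity; in particular $\|\sqrt{f}\|_{\dot{H}^1}$ is non-increasing.

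I do not expect a genuine obstacle here. Since $f$ is assumed positive and smooth on the compact torus, the change of variables $\phi = \sqrt{f}$, the $x$-differentiation, the integrations by parts, and the Fourier identities are all immediately legitimate, and $\|\sqrt{f}\|_{\dot{H}^1}$, $\|f\|_{\dot{H}^{3/2}}$ are finite. The only thing requiring care is bookkeeping: tracking the cancellation of the two first-order terms and the factors of $\f12$ coming from $\phi\,\pa_x\phi = \f12\,\pa_x\om$. A less structural alternative would be to differentiate $\|\sqrt{f}\|_{\dot{H}^1}^2 = \f14\int_\BT f\,(\pa_x\ln f)^2\,dx$ directly using \eqref{eqn: equation for log f}, but the change-of-variable route keeps the cancellation transparent and is the one I would follow.
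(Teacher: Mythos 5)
Your proof is correct and follows essentially the same route as the paper: recast \eqref{eqn: the main equation} as the De Gregorio–type equation $\pa_t\om + u\pa_x\om = \om\pa_x u$ with $\om = f$, $u = -\CH f$, derive the evolution of $\pa_x\sqrt{\om}$, test against $\pa_x\sqrt{\om}$ so that the first-order terms cancel, and identify the remaining term $\f12\int_\BT\pa_x\om\,\pa_{xx}u\,dx$ as $-\f12\|f\|_{\dot{H}^{3/2}}^2$ via Fourier. The only cosmetic difference is that you cancel the two first-order contributions by integrating one by parts, whereas the paper combines them into a total derivative $-\pa_x\big(u(\pa_x\sqrt{\om})^2\big)$; these are the same calculation.
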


%\begin{rmk}
By Lemma \ref{lem: lower bound} and Lemma \ref{lem: L inf bound}, $f(\cdot,t)$ is positive and bounded for all positive $t$.
Hence, whenever $t>0$, that $f(\cdot,t)\in H^{1/2}(\BT)$ implies $h(\cdot,t)\in H^{1/2}(\BT)$ and vice versa.
This is because
\beq
\|f\|_{\dot{H}^{\f12}}^2 = \f1{2\pi}\int_{\BT\times \BT} \f{(f(x)-f(y))^2}{4\sin^2(\f{x-y}{2})}\,dx\,dy
= \f1{2\pi}\int_{\BT\times \BT} \f{(e^{h(x)}-e^{h(y)})^2}{4\sin^2(\f{x-y}{2})}\,dx\,dy,
\label{eqn: H 1/2 semi norm of f}
\eeq
and
\beq
\min_\BT f^2 \cdot (h(x)-h(y))^2
\leq (e^{h(x)}-e^{h(y)})^2
\leq \max_\BT f^2\cdot (h(x)-h(y))^2.
\label{eqn: equivalence of H 1/2 norm of f and h}
\eeq
Hence, %by Lemma \ref{lem: lower bound},
\beq
\|h(\cdot,t)\|_{\dot{H}^{\f12}(\BT)}
\leq
\|F(\cdot,t)\|_{L^\infty} \|f(\cdot,t)\|_{\dot{H}^{\f12}(\BT)}.
%<+\infty.
\label{eqn: naive bound for H 1/2 norm of h}
\eeq
On the other hand, by Lemma \ref{lem: energy bound and decay of L^p norms}, for any $t>0$, there exists $t_*\in [\f{t}{2}, t]$, such that $\|f(\cdot,t_*)\|_{\dot{H}^{1/2}(\BT)}\leq Ct^{-1/2}\|f_0\|_{L^1}^{1/2}$.
Combining this with \eqref{eqn: naive bound for H 1/2 norm of h}, Lemma \ref{lem: lower bound}, and Lemma \ref{lem: H 1/2 estimate for h}, we find
\beqo
\|h(\cdot,t)\|_{\dot{H}^{\f12}}
\leq
\|h(\cdot,t_*)\|_{\dot{H}^{\f12}}
\leq
Ct^{-\f12} \|f_0\|_{L^1}^{\f12} \|F_0\|_{L^1}
\exp\left[\coth\left(C\|F_0\|_{L^1}^{-1} t\right)\right].
%\label{eqn: naive bound for H 1/2 norm of h precise form}
\eeqo
%As is mentioned before, t
This bound is too singular near $t = 0$ for future analysis.
In what follows, we establish an improved estimate by applying Lemmas \ref{lem: entropy estimate}-\ref{lem: H 1 estimate for sqrt f}.

\begin{prop}
\label{prop: improved H 1/2 estimate for h}
For any $t>0$,
\beq
\|h(\cdot,t)\|_{\dot{H}^{\f12}}
\leq
Ct^{-\f12}\|F_0\|_{L^1}^{\f12}\left[ \coth\left(\f{2}{\pi}\|F_0\|_{L^1}^{-1} t\right) \right]^{\f12},
\label{eqn: improved estimate for H 1/2 norm of h t small}
\eeq
and for any $0<t\leq \|f_0\|_{L^1}^{-1}$,
\beq
\|f(\cdot,t)\|_{\dot{H}^1} \leq
Ct^{-\f74}\|f_0\|_{L^1}^{\f14}\|F_0\|_{L^1},
\label{eqn: bound for H1 norm}
\eeq
where $C$'s are universal constants.
\begin{rmk}
\label{rmk: sharpness of H1/2 bound for h}
These estimates are good enough for future analysis, though we do not know whether they are sharp in terms of the singularity at $t = 0$.
%By adapting the proof below, one can improve the bound for $\|h(\cdot,t)\|_{\dot{H}^{1/2}}$ when $t>\|f_0\|_{L^1}^{-1}$. %which is better than what we stated above. % in a similar manner. %; see \eqref{eqn: main H 1/2 estimate for h}.
However, the large-time decay of \eqref{eqn: improved estimate for H 1/2 norm of h t small} is not optimal.
We will prove exponential decay of $h$ and $f$ in Section \ref{sec: global higher-order estimates} and Section \ref{sec: analyticity and sharp decay estimate}.
%We only mention the following naive bound: $\|h(\cdot,t)\|_{\dot{H}^{1/2}}$ is non-increasing in $t$ according to Lemma \ref{lem: H 1/2 estimate for h}, so
\end{rmk}

\begin{proof}
By Lemma \ref{lem: energy bound and decay of L^p norms} and Lemma \ref{lem: lower bound}, for all $t>0$,
\[
\int_\BT F\ln F\,dx \leq \|F_0\|_{L^1}\left[\ln \left( \f18\|F_0\|_{L^1}\right)
+\coth\left(\f{4}{\pi}\|F_0\|_{L^1}^{-1} t\right) \right].
\]
On the other hand, since $x\mapsto x\ln x$ is convex on $(0,+\infty)$, by Jensen's inequality,
\[
\int_\BT F\ln F\,dx \geq \|F_0\|_{L^1} \ln \left(\f{1}{2\pi}\|F_0\|_{L^1}\right).
\]

Lemma \ref{lem: H 1/2 estimate for h} shows that $\|h\|_{\dot{H}^{1/2}}$ is non-increasing.
Hence, by Lemma \ref{lem: entropy estimate}, for any $t>0$,
\[
\begin{split}
\|h(\cdot,t)\|_{\dot{H}^{\f12}}^2
\leq &\;Ct^{-1}\left[\int_\BT F\ln F\left(x,\f{t}{2}\right)\,dx -\|F_0\|_{L^1} \ln \left(\f{1}{2\pi}\|F_0\|_{L^1}\right)\right] \\
\leq &\;Ct^{-1}\|F_0\|_{L^1} \coth\left(\f{2}{\pi}\|F_0\|_{L^1}^{-1} t\right).
\end{split}
\]
This gives \eqref{eqn: improved estimate for H 1/2 norm of h t small}.
Similarly, since Lemma \ref{lem: H 1 estimate for sqrt f} shows $\|\sqrt{f}\|_{\dot{H}^1}$ is non-increasing, by Lemma \ref{lem: H 1/2 estimate for h}, we find for any $t> 0$,
\beq
\big\|\sqrt{f(\cdot,t)}\big\|_{\dot{H}^1}^2 \leq Ct^{-1}\|h(\cdot,t/2)\|_{\dot{H}^{\f12}}^2
\leq
Ct^{-2}\|F_0\|_{L^1} \coth\left(\f{1}{\pi}\|F_0\|_{L^1}^{-1} t\right).
\label{eqn: bounding H1 norm of root f}
\eeq
Since
\beq
\|f\|_{\dot{H}^1}
\leq C\big\|\sqrt{f}\big\|_{L^\infty} \big\|\sqrt{f}\big\|_{\dot{H}^1}
= C\|f\|_{L^\infty}^{\f12}\big\|\sqrt{f}\big\|_{\dot{H}^1},
\label{eqn: bounding H^1 norm of f using H^1 norm of sqrt f}
\eeq
we apply Lemma \ref{lem: L inf bound} and \eqref{eqn: bounding H1 norm of root f} to derive that, for $0<t\leq \|f_0\|_{L^1}^{-1}$,
\beqo
\|f(\cdot,t)\|_{\dot{H}^1} \leq
%\begin{cases}
Ct^{-\f54}\|f_0\|_{L^1}^{\f14}\|F_0\|_{L^1}^{\f12}\left[\coth\left(\f{1}{\pi}\|F_0\|_{L^1}^{-1} t\right) \right]^{\f12}.
\eeqo
In this case, thanks to the Cauchy-Schwarz inequality, % and the assumption $t\leq \|f_0\|_{L^1}^{-1}$,
\beqo
t^{-1}\|F_0\|_{L^1}\geq \|f_0\|_{L^1}\|F_0\|_{L^1} \geq 4\pi^2,
%\label{eqn: lower bound for L1 norm of F_0 divided by t}
\eeqo
so  $t\leq \f{1}{4\pi^2}\|F_0\|_{L^1}$.
Then we can further simply the above estimate to obtain \eqref{eqn: bound for H1 norm}.
\end{proof}
\end{prop}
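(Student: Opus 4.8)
The plan is to run the three dissipation identities of Lemmas~\ref{lem: entropy estimate}, \ref{lem: H 1/2 estimate for h}, and \ref{lem: H 1 estimate for sqrt f} through the elementary ``integrate over $[t/2,t]$ and use monotonicity of the dissipated quantity'' device. The point — and the only genuinely delicate choice — is to propagate the \emph{entropy} $\int_\BT F\ln F\,dx$ rather than the naive quantity behind $\|h\|_{\dot{H}^{1/2}}\le\|F\|_{L^\infty}\|f\|_{\dot{H}^{1/2}}$: the latter inherits the catastrophic $\exp(Ct^{-1})$ growth of $\|F(\cdot,t)\|_{L^\infty}$ from Lemma~\ref{lem: lower bound}, whereas the entropy has only a $\coth$-type (i.e.\ $\sim t^{-1}$) singularity at $t=0$, which is mild enough to be useful downstream.

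First I would bracket the entropy. For the upper bound, $\int_\BT F\ln F\,dx\le\|F\|_{L^1}\ln\|F\|_{L^\infty}=\|F_0\|_{L^1}\ln\|F(\cdot,t)\|_{L^\infty}$, and Lemma~\ref{lem: lower bound} converts this into $\|F_0\|_{L^1}\bigl[\ln(\tfrac18\|F_0\|_{L^1})+\coth(\tfrac4\pi\|F_0\|_{L^1}^{-1}t)\bigr]$; for the lower bound, convexity of $x\mapsto x\ln x$ plus Jensen, together with $\|F(\cdot,t)\|_{L^1}\equiv\|F_0\|_{L^1}$ from Lemma~\ref{lem: energy bound and decay of L^p norms}, give $\int_\BT F\ln F\,dx\ge\|F_0\|_{L^1}\ln(\tfrac1{2\pi}\|F_0\|_{L^1})$. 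Integrating Lemma~\ref{lem: entropy estimate} over $[t/2,t]$ and using that $\tau\mapsto\|h(\cdot,\tau)\|_{\dot{H}^{1/2}}$ is non-increasing (Lemma~\ref{lem: H 1/2 estimate for h}) yields $\tfrac t2\|h(\cdot,t)\|_{\dot{H}^{1/2}}^2\le\int_\BT F\ln F(\cdot,t/2)\,dx-\int_\BT F\ln F(\cdot,t)\,dx$; inserting the two entropy bounds, the logarithmic terms collapse to the negative constant $\ln(\pi/4)\,\|F_0\|_{L^1}$, which may be dropped, and what remains is exactly \eqref{eqn: improved estimate for H 1/2 norm of h t small}.

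For the $\dot{H}^1$ bound I would iterate one level down: integrating Lemma~\ref{lem: H 1/2 estimate for h} over $[t/2,t]$, using that $\|\sqrt f\|_{\dot{H}^1}$ is non-increasing (Lemma~\ref{lem: H 1 estimate for sqrt f}) and the bound for $\|h(\cdot,t/2)\|_{\dot{H}^{1/2}}$ just obtained, gives $\|\sqrt{f(\cdot,t)}\|_{\dot{H}^1}^2\le Ct^{-2}\|F_0\|_{L^1}\coth(\tfrac1\pi\|F_0\|_{L^1}^{-1}t)$. Since $\pa_x f=2\sqrt f\,\pa_x\sqrt f$, one has $\|f\|_{\dot{H}^1}\le2\|f\|_{L^\infty}^{1/2}\|\sqrt f\|_{\dot{H}^1}$, and Lemma~\ref{lem: L inf bound} supplies $\|f(\cdot,t)\|_{L^\infty}^{1/2}\le Ct^{-1/4}\|f_0\|_{L^1}^{1/4}$ on $0<t\le\|f_0\|_{L^1}^{-1}$. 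Finally, on that same range the Cauchy--Schwarz inequality $\|f_0\|_{L^1}\|F_0\|_{L^1}\ge(2\pi)^2$ forces $\tfrac1\pi\|F_0\|_{L^1}^{-1}t$ to stay bounded by an absolute constant, whence $\coth(\tfrac1\pi\|F_0\|_{L^1}^{-1}t)\le C\|F_0\|_{L^1}t^{-1}$; collecting the powers of $t$, $\|f_0\|_{L^1}$, and $\|F_0\|_{L^1}$ produces precisely \eqref{eqn: bound for H1 norm}.

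Given Lemmas~\ref{lem: entropy estimate}--\ref{lem: H 1 estimate for sqrt f}, the argument is essentially bookkeeping, so there is no single hard estimate left; the only step where it could have gone wrong is the decision to differentiate the entropy functional — it is exactly this that trades the exponential-in-$t^{-1}$ blow-up of $\|F\|_{L^\infty}$ for the far tamer $\coth$-singularity. I do not expect the large-time behaviour of these bounds to be sharp, but that is immaterial here and is remedied by the exponential-decay and Wiener-algebra estimates proved later.
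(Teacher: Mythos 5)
Your proposal is correct and follows essentially the same route as the paper: brackets the entropy from above via $\int F\ln F\le\|F_0\|_{L^1}\ln\|F\|_{L^\infty}$ and Lemma \ref{lem: lower bound}, from below via Jensen, integrates Lemma \ref{lem: entropy estimate} over $[t/2,t]$ with the monotonicity of $\|h\|_{\dot H^{1/2}}$ from Lemma \ref{lem: H 1/2 estimate for h}, then bootstraps the same way through Lemmas \ref{lem: H 1/2 estimate for h} and \ref{lem: H 1 estimate for sqrt f} and uses the Cauchy--Schwarz bound $\|f_0\|_{L^1}\|F_0\|_{L^1}\ge 4\pi^2$ to absorb the $\coth$ factor. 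All the arithmetic (the cancellation of the logarithms to $\ln(\pi/4)\|F_0\|_{L^1}<0$, the halving of the $\coth$ argument when evaluated at $t/2$, the power count yielding $t^{-7/4}$) checks out.
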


\begin{cor}
\label{cor: bound for H1 norm}
For $0<t \leq \|f_0\|_{L^1}^{-1}$
and $\al\in (0,\f15)$, with $f_\infty$ defined in \eqref{eqn: equilibrium},
\beq
\int_0^t \|f(\cdot,\tau)\|_{\dot{C}^{\al}}\,d\tau
\leq C_\al t^{\f{1-5\al}{2}} \big(\|f_0\|_{L^1}-2\pi f_\infty\big)^{\f{1-2\al}{2}}
\|f_0\|_{L^1}^{\f{\al}{2}} \|F_0\|_{L^1}^{2\al},
\label{eqn: time integral estimate of some Holder norm}
\eeq
where $C_\al>0$ is a universal constant depending on $\al$.
To be simpler, we have
\[
\int_0^t \|f(\cdot,\tau)\|_{\dot{C}^{\al}}\,d\tau
\leq C_\al t^{\f{1-5\al}{2}} \|f_0\|_{L^1}^{\f{1-\al}{2}}
\|F_0\|_{L^1}^{2\al}.
\]

\begin{proof}
With $\al\in (0,\f15)$ and $\d>0$ to be determined, by Sobolev embedding, interpolation, and the Young's inequality,
\[
\begin{split}
\|f(\cdot,t)\|_{\dot{C}^{\al}}
\leq &\; C_\al \|f(\cdot,t)\|_{\dot{H}^{\f12+\al}}
\leq C_\al \|f(\cdot,t)\|_{\dot{H}^{\f12}}^{1-2\al}\|f(\cdot,t)\|_{\dot{H}^1}^{2\al}\\
\leq &\; 2\d \|f(\cdot,t)\|_{\dot{H}^{\f12}}^2+ C_\al \d^{-\f{1-2\al}{1+2\al}} \|f(\cdot,t)\|_{\dot{H}^1}^{\f{4\al}{1+2\al}}.
\end{split}
\]
Here $C_\al>0$ is a universal constant depending only on $\al$.
Hence, by Lemma \ref{lem: energy bound and decay of L^p norms} and \eqref{eqn: bound for H1 norm}, for $0<t \leq \|f_0\|_{L^1}^{-1}$,
\[
\begin{split}
\int_0^t \|f(\cdot,\tau)\|_{\dot{C}^{\al}}\,d\tau
\leq &\; 2\d \int_0^t \|f(\cdot,\tau)\|_{\dot{H}^{\f12}}^2\,d\tau
+ C_\al \d^{-\f{1-2\al}{1+2\al}} \int_0^t \|f(\cdot,\tau)\|_{\dot{H}^1}^{\f{4\al}{1+2\al}}\,d\tau\\
\leq &\; \d \big(\|f_0\|_{L^1}-2\pi f_\infty\big)
+ C_\al \d^{-\f{1-2\al}{1+2\al}}
\|f_0\|_{L^1}^{\f{\al}{1+2\al}}
\|F_0\|_{L^1}^{\f{4\al}{1+2\al}}
\int_0^{t}
\tau^{-\f{7\al}{1+2\al}}\,d\tau.
\end{split}
\]
The last integral is finite as long as $\al\in (0,\f15)$.
Calculating the integral and optimizing over $\d>0$ yields
\eqref{eqn: time integral estimate of some Holder norm}.
\end{proof}
\begin{rmk}
By better incorporating the estimate for $\|f\|_{\dot{H}^{3/2}}$ from Lemma \ref{lem: H 1 estimate for sqrt f},
one may slightly improve the power of $t$ in \eqref{eqn: time integral estimate of some Holder norm} to become $t^{\f{1-5\al}{2(1-2\al)}}$, up to corresponding changes in the other factors.
However, we omit the details as that is not essential for the future analysis.
\end{rmk}
\end{cor}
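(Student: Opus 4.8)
The plan is to reduce everything to two a priori facts that are already in hand: the energy identity of Lemma~\ref{lem: energy bound and decay of L^p norms}, which makes $\tau\mapsto\|f(\cdot,\tau)\|_{\dot H^{1/2}}^2$ integrable on $(0,t)$ with total mass $\tfrac12(\|f_0\|_{L^1}-\|f(\cdot,t)\|_{L^1})\le \tfrac12(\|f_0\|_{L^1}-2\pi f_\infty)$, and the $\dot H^1$ bound \eqref{eqn: bound for H1 norm} of Proposition~\ref{prop: improved H 1/2 estimate for h}, which gives $\|f(\cdot,\tau)\|_{\dot H^1}\le C\tau^{-7/4}\|f_0\|_{L^1}^{1/4}\|F_0\|_{L^1}$ on $(0,\|f_0\|_{L^1}^{-1}]$. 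First I would pass from the Hölder seminorm to a fractional Sobolev seminorm via the embedding $\dot H^{1/2+\alpha}(\BT)\hookrightarrow\dot C^\alpha(\BT)$ (valid since $0<\alpha<1/2$; apply it to $f$ minus its mean), and then interpolate $\|f\|_{\dot H^{1/2+\alpha}}\le\|f\|_{\dot H^{1/2}}^{1-2\alpha}\|f\|_{\dot H^1}^{2\alpha}$, the exponents being fixed by $1/2+\alpha=(1-2\alpha)\cdot\tfrac12+2\alpha\cdot 1$.

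The key point is that $\|f\|_{\dot H^{1/2}}$ now enters with a power $1-2\alpha<1$, so a weighted Young inequality with a free parameter $\delta>0$ splits $\|f(\cdot,\tau)\|_{\dot C^\alpha}$ into $C_\alpha\,\delta\,\|f\|_{\dot H^{1/2}}^2$ plus $C_\alpha\,\delta^{-\frac{1-2\alpha}{1+2\alpha}}\|f\|_{\dot H^1}^{\frac{4\alpha}{1+2\alpha}}$, where the dual exponent $\tfrac{2}{1+2\alpha}$ is chosen so that $\|f\|_{\dot H^{1/2}}$ comes out squared. Integrating in $\tau$ over $(0,t)$, the first contribution is $\le C_\alpha\,\delta\,(\|f_0\|_{L^1}-2\pi f_\infty)$ directly from the energy identity and the lower bound $\|f(\cdot,t)\|_{L^1}\ge 2\pi f_\infty$; into the second I would substitute \eqref{eqn: bound for H1 norm}, which reduces the time integral to $\|f_0\|_{L^1}^{\frac{\alpha}{1+2\alpha}}\|F_0\|_{L^1}^{\frac{4\alpha}{1+2\alpha}}\int_0^t\tau^{-\frac{7\alpha}{1+2\alpha}}\,d\tau$. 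The one genuine constraint — and the step meriting attention — is convergence of this last integral at the origin: $\frac{7\alpha}{1+2\alpha}<1$ is equivalent to $\alpha<\tfrac15$, which is exactly the hypothesis, and then $\int_0^t\tau^{-\frac{7\alpha}{1+2\alpha}}\,d\tau=\frac{1+2\alpha}{1-5\alpha}\,t^{\frac{1-5\alpha}{1+2\alpha}}$.

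It remains to optimize $C_\alpha\bigl(\delta A+\delta^{-\frac{1-2\alpha}{1+2\alpha}}B\bigr)$ over $\delta>0$, with $A=\|f_0\|_{L^1}-2\pi f_\infty$ and $B=\|f_0\|_{L^1}^{\frac{\alpha}{1+2\alpha}}\|F_0\|_{L^1}^{\frac{4\alpha}{1+2\alpha}}t^{\frac{1-5\alpha}{1+2\alpha}}$; the minimizer yields $C_\alpha A^{\frac{1-2\alpha}{2}}B^{\frac{1+2\alpha}{2}}$ (the exponents being $\tfrac{\gamma}{1+\gamma}$ and $\tfrac{1}{1+\gamma}$ with $\gamma=\tfrac{1-2\alpha}{1+2\alpha}$), and raising $B$ to the power $\tfrac{1+2\alpha}{2}$ collapses the three exponents to $t^{\frac{1-5\alpha}{2}}$, $\|f_0\|_{L^1}^{\alpha/2}$, $\|F_0\|_{L^1}^{2\alpha}$, which is precisely \eqref{eqn: time integral estimate of some Holder norm}. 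The cruder bound then follows by discarding the gain via $\|f_0\|_{L^1}-2\pi f_\infty\le\|f_0\|_{L^1}$ and combining $\tfrac{1-2\alpha}{2}+\tfrac{\alpha}{2}=\tfrac{1-\alpha}{2}$. I do not expect any serious obstacle here: all the analytic work has been front-loaded into Lemma~\ref{lem: L inf bound} and Proposition~\ref{prop: improved H 1/2 estimate for h}, and the only thing one must not lose sight of is that $\alpha<1/5$ is \emph{forced} by integrability of $\tau^{-7\alpha/(1+2\alpha)}$ near $\tau=0$, together with the bookkeeping of exponents in the final optimization.
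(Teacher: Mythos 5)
Your argument follows the paper's proof essentially line for line: embed $\dot C^\alpha\supset\dot H^{1/2+\alpha}$, interpolate between $\dot H^{1/2}$ and $\dot H^1$, apply a parametrized Young inequality so that the $\dot H^{1/2}$ term appears squared, integrate in time using the energy identity of Lemma~\ref{lem: energy bound and decay of L^p norms} and the bound \eqref{eqn: bound for H1 norm}, and optimize over $\delta$. The exponent bookkeeping (including the identification of $\alpha<1/5$ as the integrability threshold for $\tau^{-7\alpha/(1+2\alpha)}$) is correct and matches the paper.
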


\subsection{Instant smoothing and global bounds}
\label{sec: global higher-order estimates}
Now we derive a priori estimates that show the solution $f$ would instantly become smooth, and that it converges exponentially to the constant $f_\infty$ (see \eqref{eqn: equilibrium}) as $t\to +\infty$.
We start with a higher-order generalization of Lemma \ref{lem: H 1/2 estimate for h} and Lemma \ref{lem: H 1 estimate for sqrt f} for positive smooth solutions.
\begin{lem}
\label{lem: generalized H k+1/2 estimate for h}
Suppose $f = f(x,t)$ is a positive smooth solution on $\BT\times [0,+\infty)$.
Let $f_*(t)$ be defined in Lemma \ref{lem: lower bound}.
For any $k\in \BZ_+$ and any $t>0$,
\beq
\f{d}{dt}\|h\|_{\dot{H}^{k+\f12}}^2
+ f_*(t)\|h\|_{\dot{H}^{k+1}}^2
\leq \| h \|_{\dot{H}^{k+\f12}}^2\cdot
C_k \|f\|_{L^\infty}^2 \|F\|_{L^\infty}^2\big\|\sqrt{f} \big\|_{\dot{H}^{1}}^2
\sum_{m = 0}^{k-1} \| h \|_{\dot{H}^{\f12}}^{2m},
\label{eqn: generalized H k+1/2 estimate for h}
\eeq
and
\beq
\f{d}{dt}\big\|F^{\f12}\pa_x^k f\big\|_{L^2}^2 + \|f\|_{\dot{H}^{k+\f12}}^2
\leq \big\|F^{\f12}\pa_x^k f\big\|_{L^2}^2 \cdot C_k \mathds{1}_{\{k>1\}} \|f\|_{L^\infty}^2\|F\|_{L^\infty}^2 \big\|\sqrt{f}\big\|_{\dot{H}^1}^2,
\label{eqn: generalized weighted H k estimate for f}
\eeq
where $C_k$'s are universal constants only depending on $k$.
\begin{proof}
We first prove \eqref{eqn: generalized H k+1/2 estimate for h}.
With $k\in \BZ_+$, we differentiate \eqref{eqn: equation for log f} in $x$ for $k$-times, integrate it against $ (-\D)^{1/2}\pa_x^k  h$, and derive as in Lemma \ref{lem: H 1/2 estimate for h} that
\beq
\begin{split}
\f12 \f{d}{dt}\|h\|_{\dot{H}^{k+\f12}}^2
%
%= &\; \int_\BT \pa_x^{k} \pa_t h \cdot (-\D)^{\f12} \pa_x^{k} h \,dx\\
= &\; \int_\BT \pa_x^k\big(\CH f \pa_x h \big) \cdot (-\D)^{\f12} \pa_x^{k} h \,dx
-\int_\BT (-\D)^{\f12}\pa_x^k f \cdot (-\D)^{\f12} \pa_x^k h\,dx \\
= &\; -\int_\BT   f \cdot\CH\left[ \pa_x^{k+1} h \cdot (-\D)^{\f12} \pa_x^{k} h \right] dx
-\int_\BT \pa_x^{k+1} \big(e^h\big) \cdot \pa_x^{k+1} h\,dx \\
&\;+ \int_\BT \left[\pa_x^k\big(\CH f \pa_x h \big)- \CH f \pa_x^{k+1} h \right] (-\D)^{\f12} \pa_x^{k} h \,dx\\
= &\; \f12 \int_\BT   f \left[ \big(\pa_x^{k+1} h\big)^2 - \big( (-\D)^{\f12} \pa_x^{k} h \big)^2\right] dx
-\int_\BT e^h \pa_x^{k+1} h\cdot \pa_x^{k+1} h\,dx \\
&\;+ \int_\BT \left[\pa_x^k\big(\CH f \pa_x h \big)- \CH f \pa_x^{k+1} h \right] (-\D)^{\f12} \pa_x^{k} h \,dx\\
&\; -\int_\BT \left[\pa_x^{k} \big(e^h \pa_x h\big)-e^h \pa_x^{k+1} h\right] \pa_x^{k+1} h\,dx\\
=: &\;- \f12 \int_\BT   f \left[ \big(\pa_x^{k+1} h\big)^2 + \big( (-\D)^{\f12} \pa_x^{k} h \big)^2\right] dx  + \CR_k(t),
\end{split}
\label{eqn: time derivative of H k+1/2 norm square of h}
\eeq
where
\[
\CR_k(t) :=\int_\BT \left[\pa_x^k\big(\CH f \pa_x h \big)- \CH f \pa_x^{k+1} h \right] (-\D)^{\f12} \pa_x^{k} h - \left[\pa_x^{k} \big(e^h \pa_x h\big)-e^h \pa_x^{k+1} h\right] \pa_x^{k+1} h\,dx.
\]
In the third equality above, we used the Cotlar's identity \eqref{eqn: Cotlar}.

We bound $\CR_k$ as follows.
\[
\begin{split}
|\CR_k|
\leq &\; C_k \sum_{n = 1}^k
\int_\BT  \big|\pa_x^n \CH f \big| \big|\pa_x^{k+1-n} h \big| \big|(-\D)^{\f12} \pa_x^{k} h\big| + \big|\pa_x^n f\big| \big| \pa_x^{k+1-n} h\big| \big|\pa_x^{k+1} h\big| \,dx\\
\leq &\; C_k  \big\|\pa_x^{k+1} h\big\|_{L^2}
\sum_{n = 1}^k \big\|\pa_x^{k+1-n} h \big\|_{L^{{\f{2(k+1)}{k+1-n}}}}
\big\|\pa_x^n f \big\|_{L^{\f{2(k+1)}{n}}} ,
\end{split}
\]
where $C_k>0$ is a universal constant depending on $k$.
Since
\beq
\pa_x^n f = \pa_x^n \big(e^h\big) = e^h \sum_{m = 1}^n\sum_{a_1+\cdots+ a_m = n\atop a_i\in \BZ_+} C_{a_1,\cdots, a_m}\prod_{i = 1}^m\pa_x^{a_i} h,
\label{eqn: derivative of f}
\eeq
with $C_{a_1,\cdots, a_m}$ depending on $(a_1,\cdots, a_m)$,
we have that
\beq
\begin{split}
|\CR_k|
\leq &\; C_k \big\|\pa_x^{k+1} h\big\|_{L^2} \\
&\;\cdot \sum_{n = 1}^k
\big\|\pa_x^{k+1-n} h \big\|_{L^{{\f{2(k+1)}{k+1-n}}}} \cdot \|f\|_{L^\infty}\sum_{m = 1}^n\sum_{a_1+\cdots+ a_m = n\atop a_i\in \BZ_+} \prod_{i = 1}^m\big\|\pa_x^{a_i} h \big\|_{L^{\f{2(k+1)}{a_i}}}.
\end{split}
\label{eqn: bound for R_k prelim}
\eeq
By Sobolev embedding and interpolation, for $l \in \{1,\cdots, k\}$, % \textcolor{red}{(add ref)},
\[
\big\|\pa_x^{l} h \big\|_{L^{\f{2(k+1)}{l}}}
\leq C_k
\| h \|_{\dot{H}^{l+\f12-\f{l}{2(k+1)}}}
\leq C_k
\left(\| h \|_{\dot{H}^{k+\f12}}
\| h \|_{\dot{H}^{1}}\right)^{\f{l}{k+1}}
\| h \|_{\dot{H}^{\f12}}^{1-\f{2l}{k+1}}.
\]
Hence, \eqref{eqn: bound for R_k prelim} becomes
\beqo
\begin{split}
|\CR_k|
\leq &\; C_k \|f\|_{L^\infty}  \big\| \pa_x^{k+1} h\big\|_{L^2}  \sum_{m = 1}^{k}
\| h \|_{\dot{H}^{k+\f12}} \| h \|_{\dot{H}^{1}}
\| h \|_{\dot{H}^{\f12}}^{m-1}\\
\leq &\; \f12 f_*(t) \big\|\pa_x^{k+1} h\big\|_{L^2}^2
+ \| h \|_{\dot{H}^{k+\f12}}^2\cdot
C_k f_*(t)^{-1}\|f\|_{L^\infty}^2 \| h\|_{\dot{H}^{1}}^2
\sum_{m = 1}^{k} \| h \|_{\dot{H}^{\f12}}^{2(m-1)}.
\end{split}
\eeqo
Combining this with \eqref{eqn: time derivative of H k+1/2 norm square of h} and using the fact $\|h\|_{\dot{H}^1}\leq Cf_*(t)^{-1/2}\|\sqrt{f}\|_{\dot{H}^1}$, we obtain \eqref{eqn: generalized H k+1/2 estimate for h}.
We also remark that, if $k = 0$ in \eqref{eqn: time derivative of H k+1/2 norm square of h}, we obtain $\CR_0(t) = 0$ and thus the estimate in Lemma \ref{lem: H 1/2 estimate for h}.

The proof of \eqref{eqn: generalized weighted H k estimate for f} is similar.
With $k\in \BZ_+$, %we derive as in Lemma \ref{lem: H 1 estimate for sqrt f} that
\beq
\begin{split}
&\;\f{d}{dt}\int_\BT F\big(\pa_x^k f\big)^2\,dx\\
= &\; \int_\BT \pa_x \big( \CH f \cdot F\big) \big(\pa_x^k f\big)^2\,dx
+ 2 \int_\BT F \pa_x^k f\cdot \pa_x^k \big(\CH f \cdot \pa_x f - f(-\D)^{\f12} f\big)\,dx\\
= &\; \int_\BT \pa_x \big( \CH f \cdot F\big) \big(\pa_x^k f\big)^2\,dx
+ 2 \int_\BT F \pa_x^k f \cdot \big(\CH f \cdot \pa_x \pa_x^k f - f(-\D)^{\f12} \pa_x^k f\big)\,dx\\
&\;
+ 2 \int_\BT F \pa_x^k f
\left[\pa_x^k \big(\CH f \cdot \pa_x f - f(-\D)^{\f12} f\big) - \big(\CH f \cdot \pa_x \pa_x^k f - f(-\D)^{\f12} \pa_x^k f\big)\right] dx\\
= &\; \int_\BT \pa_x \big( \CH f \cdot F\big) \big(\pa_x^k f\big)^2 + F \cdot \CH f \cdot \pa_x \big(\pa_x^k f \big)^2
- 2 \pa_x^k f \cdot (-\D)^{\f12} \pa_x^k f \,dx + \tilde{\CR}_k\\
= &\; - 2 \|f\|_{\dot{H}^{k+\f12}}^2 + \tilde{\CR}_k,
\end{split}
\label{eqn: time derivative of weighted H k norm of f}
\eeq
where
\[
\tilde{\CR}_k(t) := 2 \int_\BT F \pa_x^k f
\left[\pa_x^{k-1} \big(\CH f \cdot \pa_x \pa_x f - f(-\D)^{\f12} \pa_x f\big) - \big(\CH f \cdot \pa_x \pa_x^k f - f(-\D)^{\f12} \pa_x^k f\big)\right] dx.
\]
Deriving analogously as before, we can show that
\[
\begin{split}
\big|\tilde{\CR}_k\big|
\leq &\;
C_k \|F\|_{L^\infty} \int_\BT \big|\pa_x^k f\big|
\sum_{n = 1}^{k-1}
\left( \big|\pa_x^n \CH f \cdot \pa_x^{k-1-n}\pa_x \pa_x f\big|
+ \big|\pa_x^{n}f \cdot (-\D)^{\f12} \pa_x^{k-1-n} \pa_x f\big|\right) dx\\
\leq &\;
C_k \|F\|_{L^\infty} \big\|\pa_x^k f\big\|_{L^2}
\sum_{n = 1}^{k-1}
\big\|\pa_x^n f\big\|_{L^{\f{2(k+1)}{n}}} \big\|\pa_x^{k+1-n} f\big\|_{L^{\f{2(k+1)}{k+1-n}}}.
\end{split}
\]
By Sobolev embedding and interpolation, for all $n\in \{1,\cdots, k-1\}$,
\[
\begin{split}
\big\|\pa_x^n f\big\|_{L^{\f{2(k+1)}{n}}} \big\|\pa_x^{k+1-n} f\big\|_{L^{\f{2(k+1)}{k+1-n}}}
\leq &\;C_k \|f\|_{\dot{H}^{n+\f12-\f{n}{2(k+1)}}}
\|f\|_{\dot{H}^{(k+1-n)+\f12-\f{k+1-n}{2(k+1)}}}\\
\leq &\;C_k \|f\|_{\dot{H}^{k+\f12}} \|f\|_{\dot{H}^1}.
\end{split}
\]
Combining these estimates with \eqref{eqn: time derivative of weighted H k norm of f} yields
\beq
\f{d}{dt}\big\|F^{\f12}\pa_x^k f\big\|_{L^2}^2 + 2 \|f\|_{\dot{H}^{k+\f12}}^2
\leq C_k \mathds{1}_{\{k>1\}}\|F\|_{L^\infty} \big\|\pa_x^k f\big\|_{L^2}\|f\|_{\dot{H}^{k+\f12}} \|f\|_{\dot{H}^1},
\label{eqn: estimate for weighted H^k norm of f prelim}
\eeq
and thus, by Young's inequality,
\[
%\begin{split}
\f{d}{dt}\big\|F^{\f12}\pa_x^k f\big\|_{L^2}^2 + \|f\|_{\dot{H}^{k+\f12}}^2
\leq %&\;
\big\|\pa_x^k f\big\|_{L^2}^2 \cdot C_k \mathds{1}_{\{k>1\}} \|F\|_{L^\infty}^2\|f\|_{\dot{H}^1}^2. %\\
\]
Then \eqref{eqn: generalized weighted H k estimate for f} follows.
Note that \eqref{eqn: estimate for weighted H^k norm of f prelim} with $k=1$ exactly corresponds to the estimate in Lemma \ref{lem: H 1 estimate for sqrt f}.
\end{proof}
\end{lem}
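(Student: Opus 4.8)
The plan is to run two coupled higher-order energy estimates generalizing Lemma~\ref{lem: H 1/2 estimate for h} and Lemma~\ref{lem: H 1 estimate for sqrt f}; the identities \eqref{eqn: generalized H k+1/2 estimate for h} and \eqref{eqn: generalized weighted H k estimate for f} should collapse to those two lemmas at the endpoints $k=0$ and $k=1$ (where all error terms below vanish), which gives a useful consistency check while tracking constants. For \eqref{eqn: generalized H k+1/2 estimate for h}, I would differentiate \eqref{eqn: equation for log f} in $x$ for $k$ times, pair against $(-\D)^{\f12}\pa_x^k h$, and integrate over $\BT$. Writing $(-\D)^{\f12}=\CH\pa_x$, the leading transport term $\int_\BT\CH f\cdot\pa_x^{k+1}h\cdot(-\D)^{\f12}\pa_x^k h\,dx$ should be rewritten, by moving $\CH$ off $f$ through its antisymmetry and applying Cotlar's identity \eqref{eqn: Cotlar} to the mean-zero $\pa_x^{k+1}h$, as $\f12\int_\BT f\big[(\pa_x^{k+1}h)^2-\big((-\D)^{\f12}\pa_x^k h\big)^2\big]\,dx$; the leading diffusion term $-\int_\BT\pa_x^{k+1}f\cdot\pa_x^{k+1}h\,dx$, after peeling the principal piece $e^h\pa_x^{k+1}h$ out of $\pa_x^{k+1}(e^h)$, is $-\int_\BT f(\pa_x^{k+1}h)^2\,dx$ plus a commutator. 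Collecting everything one obtains
\[
\f12\f{d}{dt}\|h\|_{\dot{H}^{k+\f12}}^2=-\f12\int_\BT f\big[(\pa_x^{k+1}h)^2+\big((-\D)^{\f12}\pa_x^k h\big)^2\big]\,dx+\CR_k,
\]
where the integral term is dissipative and controls $f_*(t)\|h\|_{\dot{H}^{k+1}}^2$ with a fixed fraction left over to absorb $\CR_k$, and $\CR_k$ is a finite sum over $1\le n\le k$ of terms of the type $\pa_x^n(\CH f)\,\pa_x^{k+1-n}h\,(-\D)^{\f12}\pa_x^k h$ and $\pa_x^n f\,\pa_x^{k+1-n}h\,\pa_x^{k+1}h$; in particular $\CR_0\equiv0$, recovering Lemma~\ref{lem: H 1/2 estimate for h}.

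It then remains to estimate $\CR_k$, which I expect to be the one genuinely laborious step. The plan there is: place the top-order factor in $L^2$ and the other two factors in dual Lebesgue exponents $L^{2(k+1)/n}$ and $L^{2(k+1)/(k+1-n)}$ by H\"older; expand $\pa_x^n f=\pa_x^n(e^h)$ as $e^h$ times a sum of products of lower-order $x$-derivatives of $h$; estimate each such factor by Sobolev embedding plus the interpolation $\|\pa_x^\ell h\|_{L^{2(k+1)/\ell}}\le C_k\big(\|h\|_{\dot{H}^{k+\f12}}\|h\|_{\dot{H}^1}\big)^{\ell/(k+1)}\|h\|_{\dot{H}^{\f12}}^{1-2\ell/(k+1)}$; sum the resulting powers, apply Young's inequality to absorb $\f12 f_*(t)\|\pa_x^{k+1}h\|_{L^2}^2$, and apply Cauchy--Schwarz to the residual geometric sum. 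This should produce $|\CR_k|\le\f12 f_*(t)\|\pa_x^{k+1}h\|_{L^2}^2+\|h\|_{\dot{H}^{k+\f12}}^2\cdot C_k f_*(t)^{-1}\|f\|_{L^\infty}^2\|h\|_{\dot{H}^1}^2\sum_{m=0}^{k-1}\|h\|_{\dot{H}^{\f12}}^{2m}$; substituting $\|h\|_{\dot{H}^1}\le Cf_*(t)^{-1/2}\|\sqrt f\|_{\dot{H}^1}$ (from $f(\pa_x h)^2=4(\pa_x\sqrt f)^2\ge f_*(t)(\pa_x h)^2$) and $f_*(t)^{-1}=\|F\|_{L^\infty}$ then yields \eqref{eqn: generalized H k+1/2 estimate for h}.

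For \eqref{eqn: generalized weighted H k estimate for f} I would work directly with $f$ in the weighted norm: differentiate \eqref{eqn: the main equation} in $x$ for $k$ times and compute $\f{d}{dt}\int_\BT F(\pa_x^k f)^2\,dx=\int_\BT(\pa_t F)(\pa_x^k f)^2\,dx+2\int_\BT F\,\pa_x^k f\,\pa_t\pa_x^k f\,dx$ with $\pa_t F=\pa_x(\CH f\cdot F)$ from \eqref{eqn: equation for big F}. The decisive exact cancellation is that the $\pa_t F$ contribution combines with the top-order transport term $2\int_\BT F\,\pa_x^k f\cdot\CH f\,\pa_x\pa_x^k f\,dx$ into the total $x$-derivative $\int_\BT\pa_x\big(\CH f\cdot F\,(\pa_x^k f)^2\big)\,dx=0$, while the top-order diffusion term equals $-2\int_\BT\pa_x^k f\cdot(-\D)^{\f12}\pa_x^k f\,dx=-2\|f\|_{\dot{H}^{k+\f12}}^2$ since $Ff\equiv1$. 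What remains is a commutator $\tilde{\CR}_k$ built from the $\pa_x^{k-1}$-level Leibniz error of $\CH f\cdot\pa_x^2 f-f(-\D)^{\f12}\pa_x f$, which vanishes identically at $k=1$ (recovering Lemma~\ref{lem: H 1 estimate for sqrt f}) and hence carries the factor $\mathds{1}_{\{k>1\}}$. I would bound $\tilde{\CR}_k$ by H\"older with exponents $2(k+1)/n$, $2(k+1)/(k+1-n)$ and the interpolation $\|\pa_x^n f\|_{L^{2(k+1)/n}}\|\pa_x^{k+1-n}f\|_{L^{2(k+1)/(k+1-n)}}\le C_k\|f\|_{\dot{H}^{k+\f12}}\|f\|_{\dot{H}^1}$, apply Young's inequality to absorb $\|f\|_{\dot{H}^{k+\f12}}^2$, leaving $C_k\mathds{1}_{\{k>1\}}\|F\|_{L^\infty}^2\|\pa_x^k f\|_{L^2}^2\|f\|_{\dot{H}^1}^2$, and finish with $\|f\|_{\dot{H}^1}\le C\|f\|_{L^\infty}^{1/2}\|\sqrt f\|_{\dot{H}^1}$. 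The main obstacle in both parts is precisely this commutator bookkeeping: organizing the combinatorial expansion of $\pa_x^n(e^h)$ and choosing the Lebesgue and Sobolev indices so that, after Young's inequality, every term is dominated by exactly the claimed product of $f_*(t)$, $\|f\|_{L^\infty}$, $\|F\|_{L^\infty}$, $\|\sqrt f\|_{\dot{H}^1}$ and the stated powers of $\|h\|_{\dot{H}^{\f12}}$ (resp.\ of $\|f\|_{\dot{H}^1}$).
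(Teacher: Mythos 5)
Your sketch follows the paper's proof essentially step for step: Cotlar's identity for the top-order transport contribution, peeling off the principal $e^h\pa_x^{k+1}h$ from $\pa_x^{k+1}(e^h)$, H\"older with exponents $L^{2(k+1)/n}$ and $L^{2(k+1)/(k+1-n)}$, the expansion of $\pa_x^n(e^h)$, the interpolation chain through $\dot{H}^{\ell+\f12-\ell/(2(k+1))}$, Young's inequality to absorb half the dissipation, and the same weighted $F(\pa_x^k f)^2$ energy with the $\pa_t F$-conservation cancellation and the $\mathds{1}_{\{k>1\}}$ vanishing at $k=1$. The approach, the commutator decompositions $\CR_k$, $\tilde{\CR}_k$, and the final substitutions $\|h\|_{\dot H^1}\le Cf_*(t)^{-1/2}\|\sqrt f\|_{\dot H^1}$ and $f_*(t)^{-1}=\|F\|_{L^\infty}$ all match; this is the paper's argument.
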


The quantities involved in the estimates in Lemma \ref{lem: generalized H k+1/2 estimate for h} can be linked as follows.

\begin{lem}
\label{lem: connecting different quantities}
For any $k\in \BZ_+$ and any $t>0$,
\[
\|h\|_{\dot{H}^{k+\f12}}
\leq \|f\|_{\dot{H}^{k+\f12}}\cdot
C_k \left[ \|f\|_{L^\infty}^{\f12}\|F\|_{L^\infty}^{\f12}
+ \sum_{n = 1}^{k-1}
\|h\|_{\dot{H}^{\f12}}^{2n}
\|f\|_{L^\infty}^n \|F\|_{L^\infty}^n\right] f_*(t)^{-\f32}
\big\|\sqrt{f}\big\|_{\dot{H}^1} ,
\]
and
\[
\big\|F^{\f12}\pa_x^k f\big\|_{L^2}
\leq \|h\|_{\dot{H}^k}\cdot C_k \|f\|_{L^\infty}^{\f12}  \sum_{m = 0}^{k-1}
\|h\|_{\dot{H}^{\f12}}^{m},
\]
where $C_k$'s are universal constants only depending on $k$.

\begin{proof}
For $k\in \BZ_+$,
\[
\begin{split}
\|h\|_{\dot{H}^{k+\f12}}
= &\;\big\|\pa_x^{k-1}\big(F\pa_x f\big)\big\|_{\dot{H}^{\f12}}\\
%\leq &\;C_k \sum_{n = 0}^{k-1} \big\|\pa_x^{n}\big(f^{-1}\big) \pa_x^{k-n} f\big\|_{\dot{H}^{\f12}}\\
\leq &\;
C_k \big\|F\pa_x^{k} f\big\|_{\dot{H}^{\f12}}
+ C_k \sum_{n = 1}^{k-1} \sum_{m=1}^n\sum_{a_1+\cdots+a_m = n \atop a_j\in \BZ_+}\left\|\left(F^{m+1}\prod_{j = 1}^m \pa_x^{a_j} f\right)  \pa_x^{k-n} f\right\|_{\dot{H}^{\f12}}\\
\leq &\;
C_k \|F\|_{\dot{H}^1} \|f\|_{\dot{H}^{k+\f12}}\\
&\;
+ C_k \sum_{n = 1}^{k-1} \sum_{m=1}^n\sum_{a_1+\cdots+a_m = n \atop a_j\in \BZ_+} \|F\|_{\dot{H}^{\f12+\f{1}{4(m+1)}}}^{m+1}
\left(\prod_{j = 1}^m \big\|\pa_x^{a_j} f\big\|_{\dot{H}^{\f12+\f{1}{4m}}}\right)
\big\| \pa_x^{k-n} f\big\|_{\dot{H}^{\f12}}.
\end{split}
\]
Here we used the inequality that, for any $s_1>\f12$ and $s_2>0$ with $s_1\geq s_2$,
\[
\|fg\|_{\dot{H}^{s_2}}\leq C_{s_1,s_2} \|f\|_{\dot{H}^{s_1}}\|g\|_{\dot{H}^{s_2}}.
\]
Since $a_j\in [1,k-1]$, we derive with interpolation and the condition $\sum_j a_j = n$ that
\[
\begin{split}
&\; \|F\|_{\dot{H}^{\f12+\f{1}{4(m+1)}}}^{m+1}
\left(\prod_{j = 1}^m \big\|\pa_x^{a_j} f\big\|_{\dot{H}^{\f12+\f{1}{4m}}}\right)
\big\| \pa_x^{k-n} f\big\|_{\dot{H}^{\f12}}\\
\leq &\;
\|F\|_{\dot{H}^{\f12}}^{m+\f12} \|F\|_{\dot{H}^1}^{\f12}
\left(\prod_{j = 1}^m \|f \|_{\dot{H}^{\f12}}^{1-\f{1}{2m}-\f{a_j}{k}}\|f\|_{\dot{H}^1}^{\f{1}{2m}}
\|f\|_{\dot{H}^{k+\f12}}^{\f{a_j}{k}} \right) \|f\|_{\dot{H}^{\f12}}^{\f{n}{k}}\|f\|_{\dot{H}^{k+\f12}}^{1-\f{n}{k}}\\
\leq &\;
\left(f_*(t)^{-1} \|h\|_{\dot{H}^{\f12}}\right)^{m+\f12}
\left(f_*(t)^{-2} \|f\|_{\dot{H}^1}\right)^{\f12}
\|f \|_{\dot{H}^{\f12}}^{m-\f{1}{2}}\|f\|_{\dot{H}^1}^{\f{1}{2}}
\|f\|_{\dot{H}^{k+\f12}}\\
\leq &\; f_*(t)^{-(m+\f32)} \|h\|_{\dot{H}^{\f12}}^{2m}
\|f \|_{L^\infty}^{m-\f{1}{2}}\|f\|_{\dot{H}^1}
\|f\|_{\dot{H}^{k+\f12}}.
\end{split}
\]
In the second inequality, we used the definition of $H^{1/2}$-semi-norm and the fact $F = e^{-h}$ (cf.\;\eqref{eqn: H 1/2 semi norm of f} and \eqref{eqn: equivalence of H 1/2 norm of f and h}) to obtain $\|F\|_{\dot{H}^{1/2}}\leq f_*(t)^{-1} \|h\|_{\dot{H}^{1/2}}$.
In the last inequality, we used
$\|f\|_{\dot{H}^{1/2}} \leq \|f\|_{L^\infty}\|h\|_{\dot{H}^{1/2}}$ that is also derived from \eqref{eqn: equivalence of H 1/2 norm of f and h}.
Hence,
\[
%\begin{split}
\|h\|_{\dot{H}^{k+\f12}}
\leq %&\;
C_k \left[f_*(t)^{-2} %+ \|f\|_{\dot{H}^1} \|f\|_{\dot{H}^{k+\f12}}\\
%&\;+ C_k
+ \sum_{n = 1}^{k-1} \sum_{m=1}^n
f_*(t)^{-(m+\f32)} \|h\|_{\dot{H}^{\f12}}^{2m}
\|f \|_{L^\infty}^{m-\f{1}{2}} \right]
\|f\|_{\dot{H}^1} \|f\|_{\dot{H}^{k+\f12}}. %\\
\]
This implies the first desired inequality.

On the other hand, thanks to \eqref{eqn: derivative of f}, we derive using Sobolev embedding and interpolation that
\[
\begin{split}
\big\|F^{\f12}\pa_x^k f\big\|_{L^2}
= &\; \left\|e^{\f12 h} \sum_{m = 1}^k \sum_{a_1+\cdots+ a_m = k\atop a_i\in \BZ_+} C_{a_1,\cdots, a_m}\prod_{i = 1}^m\pa_x^{a_i} h \right\|_{L^2}\\
\leq &\; C_k \big\|\sqrt{f}\big\|_{L^\infty} \sum_{m = 1}^k \sum_{a_1+\cdots+ a_m = k\atop a_i\in \BZ_+} \prod_{i = 1}^m \big\|\pa_x^{a_i} h \big\|_{L^{\f{2k}{a_i}}}\\
\leq &\; C_k \|f\|_{L^\infty}^{\f12} \sum_{m = 1}^k \sum_{a_1+\cdots+ a_m = k\atop a_i\in \BZ_+} \prod_{i = 1}^m
\|h\|_{\dot{H}^{a_i+\f12-\f{a_i}{2k}}}\\
\leq &\; C_k \|f\|_{L^\infty}^{\f12} \sum_{m = 1}^k
\|h\|_{\dot{H}^{\f12}}^{m-1} \|h\|_{\dot{H}^k}.
\end{split}
\]
This proves the second desired estimate.
\end{proof}
\end{lem}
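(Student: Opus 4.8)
\textit{Proof proposal.} The plan is to establish the two inequalities separately, both by expanding $h=\ln f$ and $f=e^h$ via the chain rule and controlling the resulting products with the multiplicative Sobolev estimate $\|uv\|_{\dot H^{s_2}}\le C_{s_1,s_2}\|u\|_{\dot H^{s_1}}\|v\|_{\dot H^{s_2}}$ (valid for $s_1>\f12$, $s_1\ge s_2>0$) together with Gagliardo--Nirenberg interpolation; throughout I would use the elementary comparisons between $\dot H^{1/2}$-norms of $f$, $h$, and $F=e^{-h}$ recorded around \eqref{eqn: H 1/2 semi norm of f}--\eqref{eqn: equivalence of H 1/2 norm of f and h}, namely $\|f\|_{\dot H^{1/2}}\le \|f\|_{L^\infty}\|h\|_{\dot H^{1/2}}$ and $\|F\|_{\dot H^{1/2}}\le f_*(t)^{-1}\|h\|_{\dot H^{1/2}}$.

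For the first inequality I would write $\pa_x h=F\,\pa_x f$, so $\pa_x^k h=\pa_x^{k-1}(F\,\pa_x f)$, and expand by the Leibniz rule, using also the Fa\`a di Bruno formula for $\pa_x^j F=\pa_x^j(1/f)$, which is a polynomial in $F$ and the derivatives $\pa_x^a f$. This yields one top-order term $F\,\pa_x^k f$ and lower-order terms of the schematic form $F^{m+1}\big(\prod_{j=1}^m \pa_x^{a_j}f\big)\pa_x^{k-n}f$ with $1\le n\le k-1$, $\sum_j a_j=n$, $a_j\ge 1$, and $1\le k-n\le k-1$. Taking $\dot H^{1/2}$-norms, the top term is bounded by $C\|F\|_{\dot H^1}\|f\|_{\dot H^{k+1/2}}$. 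For the lower-order terms I would place each of the $m+1$ copies of $F$ in $\dot H^{1/2+1/(4(m+1))}$, each factor $\pa_x^{a_j}f$ in $\dot H^{1/2+1/(4m)}$, and the remaining factor $\pa_x^{k-n}f$ in $\dot H^{1/2}$ (all the former being strictly above $\dot H^{1/2}$, so the iterated product inequality applies), then interpolate: the constraint $\sum_j a_j=n$ forces the interpolation exponents to combine so that each such term is dominated by $f_*(t)^{-(m+3/2)}\|h\|_{\dot H^{1/2}}^{2m}\|f\|_{L^\infty}^{m-1/2}\|f\|_{\dot H^1}\|f\|_{\dot H^{k+1/2}}$, after also using $\|F\|_{\dot H^{1/2}}\le f_*(t)^{-1}\|h\|_{\dot H^{1/2}}$, $\|F\|_{\dot H^1}\le C f_*(t)^{-3/2}\|\sqrt f\|_{\dot H^1}$ (from $\pa_x F=-2F^{3/2}\pa_x\sqrt f$), and $\|f\|_{\dot H^{1/2}}\le\|f\|_{L^\infty}\|h\|_{\dot H^{1/2}}$. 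Finally I would replace $\|f\|_{\dot H^1}$ by $C\|f\|_{L^\infty}^{1/2}\|\sqrt f\|_{\dot H^1}$ and rewrite the powers of $f_*(t)^{-1}$ as powers of $\|F\|_{L^\infty}$; reindexing the double sum over $n$ and $m$ then collapses everything into the advertised bound, with the $n=1,\dots,k-1$ sum collecting the lower-order contributions and $n=0$ the top-order one.

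For the second inequality I would use the Fa\`a di Bruno expansion \eqref{eqn: derivative of f}, $\pa_x^k f=e^h\sum_{m=1}^k\sum_{a_1+\cdots+a_m=k}C_{a_1,\dots,a_m}\prod_{i=1}^m \pa_x^{a_i}h$; multiplying by $F^{1/2}=e^{-h/2}$ leaves $e^{h/2}=\sqrt f$ times the same sum. Estimating $\sqrt f$ in $L^\infty$ by $\|f\|_{L^\infty}^{1/2}$, each factor $\pa_x^{a_i}h$ in $L^{2k/a_i}$ by Gagliardo--Nirenberg as $C\|h\|_{\dot H^{a_i+1/2-a_i/(2k)}}$, and then interpolating each of these between $\dot H^{1/2}$ and $\dot H^k$ with exponent $a_i/k$ (legitimate since $1\le a_i\le k$), the constraint $\sum_i a_i=k$ makes the product over $i$ collapse to $\|h\|_{\dot H^{1/2}}^{m-1}\|h\|_{\dot H^k}$; summing over $m=1,\dots,k$ gives $\|F^{1/2}\pa_x^k f\|_{L^2}\le C_k\|f\|_{L^\infty}^{1/2}\|h\|_{\dot H^k}\sum_{m=0}^{k-1}\|h\|_{\dot H^{1/2}}^m$, as claimed.

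I expect the main obstacle to be the bookkeeping in the first inequality: choosing, for each of the variably many factors produced by $\pa_x^{k-1}(F\,\pa_x f)$ and by the chain-rule expansion of $\pa_x^j(1/f)$, a fractional Sobolev index strictly above $\f12$ so the product estimate applies simultaneously to all of them, and then verifying that the interpolation exponents — dictated by $\sum_j a_j=n$ — leave exactly one factor $\|f\|_{\dot H^{k+1/2}}$, one factor $\|f\|_{\dot H^1}$ (subsequently traded for $\|\sqrt f\|_{\dot H^1}$ and a half power of $\|f\|_{L^\infty}$), and precisely the stated powers of $\|h\|_{\dot H^{1/2}}$, $\|f\|_{L^\infty}$, and $\|F\|_{L^\infty}$. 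The second inequality is comparatively routine once the Fa\`a di Bruno formula is in hand.
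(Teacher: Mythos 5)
Your proposal is correct and follows essentially the same approach as the paper: the same decomposition $\partial_x^k h=\partial_x^{k-1}(F\,\partial_x f)$ with Leibniz plus the chain rule for $\partial_x^j(1/f)$, the same product estimate $\|uv\|_{\dot H^{s_2}}\le C\|u\|_{\dot H^{s_1}}\|v\|_{\dot H^{s_2}}$ with the same choice of indices $\f12+\f1{4(m+1)}$ and $\f12+\f1{4m}$, the same interpolation dictated by $\sum_j a_j=n$, and for the second inequality the same Fa\`a di Bruno expansion \eqref{eqn: derivative of f} with Gagliardo--Nirenberg and interpolation between $\dot H^{1/2}$ and $\dot H^k$. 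The one small point you make explicit that the paper leaves implicit is the identity $\partial_x F=-2F^{3/2}\partial_x\sqrt f$ behind the bound $\|F\|_{\dot H^1}\le C f_*(t)^{-3/2}\|\sqrt f\|_{\dot H^1}$, which is a useful clarification but not a different method.
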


Now we may bootstrap with
Lemma \ref{lem: H 1/2 estimate for h}, Lemma \ref{lem: generalized H k+1/2 estimate for h}, and Lemma \ref{lem: connecting different quantities} to prove finiteness of $\|f\|_{\dot{H}^{k}}$ for all $k\in \BZ_+$ at all positive times, as well as their exponential decay as $t\to \infty$.

\begin{prop}
\label{prop: boundedness and decay of H^k norms of h}
Suppose $f = f(x,t)$ is a positive smooth solution on $\BT\times [0,+\infty)$.
For all $k\in \BZ_+$ and $t>0$,
\beq
%\begin{split}
%&
\|h(\cdot,t)\|_{\dot{H}^{k+\f12}},\,\big\|F^{\f12}\pa_x^k f(\cdot,t)\big\|_{L^2} \leq \CC_k(t,\|f_0\|_{L^1},\|F_0\|_{L^1}).
%\end{split}
\label{eqn: claim for smoothness}
\eeq
Here $\CC_k(t,\|f_0\|_{L^1},\|F_0\|_{L^1})$ denotes a generic function of $k$, $t$, $\|f_0\|_{L^1}$, and $\|F_0\|_{L^1}$, which satisfies:
\begin{enumerate}[(i)]
  \item It is finite whenever $t>0$;\label{assumption: finite}
  \item It is non-increasing in $t$, and non-decreasing in $\|f_0\|_{L^1}$ and $\|F_0\|_{L^1}$;\label{assumption: monotonicity}
  \item \label{assumption: exponential decay}
  It decays exponentially to $0$ as $t\to +\infty$ with an explicit rate that only depends on $\|f_0\|_{L^1}$ and $\|F_0\|_{L^1}$, i.e., there exists some $\al(\|f_0\|_{L^1},\|F_0\|_{L^1})>0$ and constant $C_k(\|f_0\|_{L^1},\|F_0\|_{L^1})>0$, such that for all $t\gg 1$,
      \[
      \CC_k(t,\|f_0\|_{L^1},\|F_0\|_{L^1})
      \leq C_k(\|f_0\|_{L^1},\|F_0\|_{L^1}) \exp[-\al(\|f_0\|_{L^1},\|F_0\|_{L^1})t].
      \]

\end{enumerate}
As a result, for any $t_0>0$, $f(x,t)$ is a smooth solution to \eqref{eqn: the main equation} on the space-time domain $\BT\times [t_0,+\infty)$.
As $t\to +\infty$, $f(x,t)$ converges exponentially to $f_\infty$ defined in \eqref{eqn: equilibrium}.
To be more precise, for arbitrary $j,k\in \BN$, and $t>0$,
\beq
%\begin{split}
%&
\big\|\pa_t^j \pa_x^k (f(\cdot,t)-f_\infty)\big\|_{L^2(\BT)}\leq \CC_{j,k}(t, \|f_0\|_{L^1},\|F_0\|_{L^1}).
%\mbox{ for all }t\in [t_0,+\infty),\\
%&\mbox{and it converges to $0$ exponentially as }t\to +\infty.
%\end{split}
\label{eqn: claim for decay}
\eeq
Here $\CC_{j,k}(t, \|f_0\|_{L^1},\|F_0\|_{L^1})$ is a function having similar properties as
$\CC_k(t, \|f_0\|_{L^1},\|F_0\|_{L^1})$ above.
%Here $H^0$-norm is interpreted as the $L^2$-norm.

\begin{rmk}
We do not pursue precise form of estimates for the above claims, as they are complicated but may not be optimal.
Nevertheless, in Section \ref{sec: analyticity and sharp decay estimate} below, we will prove sharp decay estimates for the solution when $t\gg 1$.
\end{rmk}

\begin{proof}
Throughout this proof, %we will use the notation $\CC_{k}(t, \|f_0\|_{L^1},\|F_0\|_{L^1})$ introduced above.
%Besides, we will also use
we will use the notation $C_k(t, \|f_0\|_{L^1},\|F_0\|_{L^1})$ to denote a generic function of $k$, $t$, $\|f_0\|_{L^1}$, and $\|F_0\|_{L^1}$, which satisfies the properties \eqref{assumption: finite} and \eqref{assumption: monotonicity} above, and yet
\begin{enumerate}[(iii')]
  \item $C_k(t, \|f_0\|_{L^1},\|F_0\|_{L^1})$ stays bounded as $t\to +\infty$.
\end{enumerate}
For brevity, we will write it as $C_k(t)$ in what follows.
Like the commonly-used notation $C$ for universal constants, its precise definition may vary from line to line.
If it turns out to not depend on $k$, we simply write it as $C(t)$.

By Lemma \ref{lem: lower bound}, Lemma \ref{lem: L inf bound}, and Propostion \ref{prop: improved H 1/2 estimate for h}, we find
\[
\|f(\cdot,t)\|_{L^\infty},\,
\|F(\cdot,t)\|_{L^\infty},\,
\|h(\cdot,t)\|_{\dot{H}^{\f12}}\leq C(t).
\]
As a result, Lemma \ref{lem: generalized H k+1/2 estimate for h}
and Lemma \ref{lem: connecting different quantities}
imply that, for any $k\in \BZ_+$ and any $t>0$,
\begin{align}
\f{d}{dt}\|h\|_{\dot{H}^{k+\f12}}^2
+ f_*(t)\|h\|_{\dot{H}^{k+1}}^2
\leq &\; \| h \|_{\dot{H}^{k+\f12}}^2\cdot
C_k(t) \big\|\sqrt{f} \big\|_{\dot{H}^{1}}^2,
\label{eqn: inequality for H k+1/2 seminorm of h}\\
\f{d}{dt}\big\|F^{\f12}\pa_x^k f\big\|_{L^2}^2 + \|f\|_{\dot{H}^{k+\f12}}^2
\leq &\;\big\|F^{\f12}\pa_x^k f\big\|_{L^2}^2 \cdot C_k (t) \mathds{1}_{\{k>1\}} \big\|\sqrt{f}\big\|_{\dot{H}^1}^2,
\label{eqn: inequality for weight H k seminorm of f}
\end{align}
and
\begin{align}
\|h\|_{\dot{H}^{k+\f12}}
\leq &\; \|f\|_{\dot{H}^{k+\f12}}\cdot
C_k (t) %f_*(t)^{-\f32}
\big\|\sqrt{f}\big\|_{\dot{H}^1},
\label{eqn: bounding H k+1/2 seminorm of h with the same seminorm of f}\\
\big\|F^{\f12}\pa_x^k f\big\|_{L^2}
\leq &\;\|h\|_{\dot{H}^k}\cdot C_k(t). %\|f\|_{L^\infty}^{\f12}.
\label{eqn: bounding weighted H k seminorm of f with H k seminorm of h}
\end{align}

\setcounter{step}{0}
\begin{step}[Preliminary]
We start from bounding $\|h(\cdot,t)\|_{\dot{H}^{\f12}}$ and $\|\sqrt{f}\|_{\dot{H}^{1}}$.

By \eqref{eqn: H 1/2 estimate for h derivation} in Lemma \ref{lem: H 1/2 estimate for h},
\beqo
\f{d}{dt}\|h\|_{\dot{H}^{\f12}}^2
\leq -f_*(t) \int_\BT \big(\pa_x h \big)^2 + \big((-\D)^{\f12} h\big)^2\,dx = -2f_*(t)\|h\|_{\dot{H}^1}^2. %\leq -2a_*\|h\|_{\dot{H}^{\f12}}^2.
\eeqo
Hence, for arbitrary $t\geq t_*>0$, we have
\[
\|h(\cdot,t)\|_{\dot{H}^{\f12}}^2
\leq
\big\|h(\cdot,t_*)\big\|_{\dot{H}^{\f12}}^2 \exp\left(-2\int_{t_*}^t f_*(\tau)\,d\tau\right),
\]
This together with Lemma \ref{lem: lower bound} and Proposition \ref{prop: improved H 1/2 estimate for h} implies $\|h(\cdot,t)\|_{\dot{H}^{\f12}}$ verifies the properties in \eqref{eqn: claim for smoothness}, although it is not included there.
%is finite for all $t>0$, it is decreasing on $(0,+\infty)$, and it decays exponentially as $t\to +\infty$.

Thanks to \eqref{eqn: bounding H1 norm of root f}, for any $t>0$,
\[
\big\|F^{\f12}\pa_x f(\cdot,t)\big\|_{L^2}^2 = 4\big\|\sqrt{f(\cdot,t)}\big\|_{\dot{H}^1}^2 \leq Ct^{-1}\|h(\cdot,t/2)\|_{\dot{H}^{\f12}}^2.
\]
Hence, $\|\sqrt{f(\cdot,t)}\|_{\dot{H}^1}$ satisfies \eqref{eqn: claim for smoothness} as well.
Consequently, given a $C_k(t)$ satisfying the assumptions at the beginning of the proof, we have
\beq
C_k(t)\big\|\sqrt{f(\cdot,t)} \big\|_{\dot{H}^{1}}^2 \leq \f12 f_*(t)
\label{eqn: smallness of RHS coefficient}
\eeq
for all $t\gg 1$, where the required largeness depends on the form of $C_k(\cdot)$, and thus on $k$, $\|f_0\|_{L^1}$, and $\|F_0\|_{L^1}$ essentially.

In the next two steps, we prove the claim \eqref{eqn: claim for smoothness} by induction.
\end{step}

\begin{step}[Base step]
Let us verify \eqref{eqn: claim for smoothness} with $k = 1$.
We have just proved $\|F^{\f12}\pa_x f(\cdot,t)\|_{L^2}$ satisfies \eqref{eqn: claim for smoothness}.
On the other hand, by Lemma \ref{lem: H 1 estimate for sqrt f}, for arbitrary $t>0$, there exists $t_*\in [\f12t, \f34t]$, such that
\[
\|f(\cdot,t_*)\|_{\dot{H}^{\f32}}^2\leq Ct^{-1}\big\| F^{\f12} \pa_x f(\cdot,t/2)\big\|_{L^2}^2.
\]
Then \eqref{eqn: bounding H k+1/2 seminorm of h with the same seminorm of f} implies
\[
\|h(\cdot,t_*)\|_{\dot{H}^{\f32}}
\leq
Ct^{-\f12}\big\| F^{\f12} \pa_x f(\cdot,t/2)\big\|_{L^2}
\cdot
C(t_*) % f_*(t_*)^{-\f32}
\big\|\sqrt{f(\cdot,t_*)}\big\|_{\dot{H}^1} \leq C(t). % C\big(t,\|f_0\|_{L^1},\|F_0\|_{L^1}\big).
\]
Now applying \eqref{eqn: inequality for H k+1/2 seminorm of h} with $k = 1$, %since $t\geq t_*$,
we find
\beq
\begin{split}
&\;\|h(\cdot,t)\|_{\dot{H}^{\f32}}^2
+ \int_{t_*}^t \exp\left(\int_{t'}^t C(\tau) \big\|\sqrt{f (\cdot,\tau)}\big\|_{\dot{H}^{1}}^2\,d\tau\right) f_*(t')\|h(\cdot,t')\|_{\dot{H}^2}^2\,dt'\\
\leq &\;
\exp\left(\int_{t_*}^t C(\tau) \big\|\sqrt{f (\cdot,\tau)}\big\|_{\dot{H}^{1}}^2\,d\tau\right)
\|h(\cdot,t_*)\|_{\dot{H}^{\f32}}^2.
\end{split}
\label{eqn: bound for H 3/2 seminorm of h}
\eeq
Notice that %, %for any $t_*>0$,
\beq
\int_{t_*}^\infty C(\tau) \big\|\sqrt{f (\cdot,\tau)}\big\|_{\dot{H}^{1}}^2\,d\tau \leq C(t_*) \leq C(t),
%C\big(t,\|f_0\|_{L^1},\|F_0\|_{L^1}\big).
\label{eqn: boundedness of growth factor}
\eeq
so
%Since $t$ is arbitrary,
%we obtain that
$\|h(\cdot,t)\|_{\dot{H}^{\f32}}\leq C(t)$. % is finite for all $t>0$, and it is uniformly bounded on $[t_0,+\infty)$ for arbitrary $t_0>0$.
Also, \eqref{eqn: inequality for H k+1/2 seminorm of h} and \eqref{eqn: smallness of RHS coefficient} imply that, for sufficiently large $t$,
\[
\f{d}{dt}\|h\|_{\dot{H}^{\f32}}^2
+ \f12 f_*(t)\|h\|_{\dot{H}^{\f32}}^2
\leq 0.
\]
Hence, $\|h(\cdot,t)\|_{\dot{H}^{\f32}}$ decays exponentially as $t\to +\infty$ with an explicit rate (cf.\;\eqref{eqn: lower bound for f}).
Therefore, $\|h(\cdot,t)\|_{\dot{H}^{\f32}}$ satisfies \eqref{eqn: claim for smoothness}.

We also derive from \eqref{eqn: bound for H 3/2 seminorm of h} that there exists $t'_*\in [\f34t,\f78t]$, such that
\beq
\|h(\cdot,t_*')\|_{\dot{H}^2}^2
\leq Ct^{-1}f_*(t_*')^{-1}\exp\left(\int_{t_*}^t C(\tau) \big\|\sqrt{f (\cdot,\tau)}\big\|_{\dot{H}^{1}}^2\,d\tau\right)
\|h(\cdot,t_*)\|_{\dot{H}^{\f32}}^2\leq C(t). %C\big(t,\|f_0\|_{L^1},\|F_0\|_{L^1}\big).
\label{eqn: auxiliary estimate}
\eeq
\end{step}

\begin{step}[Induction step]
Suppose that \eqref{eqn: claim for smoothness} holds for the case $k$, and that for any $t>0$, there exists $t_*\in [t-2^{-2k} t, t-2^{-2k-1}t]$, such that $\|h(\cdot,t_*)\|_{\dot{H}^{k+1}}\leq C_k(t)$ (cf.\;\eqref{eqn: auxiliary estimate}). %C(k,t,\|f_0\|_{L^1},\|F_0\|_{L^1})$.
In this step, we verify \eqref{eqn: claim for smoothness} for $\|h(\cdot,t)\|_{\dot{H}^{k+\f32}}$ and $\|F^{\f12}\pa_x^{k+1} f(\cdot,t)\|_{L^2}$ by arguing as above.

Thanks to \eqref{eqn: bounding weighted H k seminorm of f with H k seminorm of h},
\[
\big\|F^{\f12}\pa_x^{k+1} f(\cdot,t_*)\big\|_{L^2}
\leq \|h(\cdot,t_*)\|_{\dot{H}^{k+1}}\cdot C_{k+1}(t_*) \leq C_{k+1}(t).
%\CC_k(t_*) \|f(\cdot,t_*)\|_{L^\infty}^{\f12}\leq C\big(k,t,\|f_0\|_{L^1},\|F_0\|_{L^1}\big).
\]
Then \eqref{eqn: inequality for weight H k seminorm of f} yields
\beq
\begin{split}
&\;\big\|F^{\f12}\pa_x^{k+1} f(\cdot,t)\big\|_{L^2}^2
+ \int_{t_*}^t \exp\left(\int_{t'}^t C_{k+1}(\tau) \big\|\sqrt{f (\cdot,\tau)} \big\|_{\dot{H}^{1}}^2\,d\tau\right) \|f(\cdot,t')\|_{\dot{H}^{k+\f32}}^2\,dt'\\
\leq &\;
\exp\left(\int_{t_*}^t C_{k+1}(\tau) \big\|\sqrt{f (\cdot,\tau)} \big\|_{\dot{H}^{1}}^2\,d\tau\right)
\big\|F^{\f12}\pa_x^{k+1} f(\cdot,t_*)\big\|_{L^2}^2,
\end{split}
\label{eqn: bound for weight L^2 norm of H k+1 seminorm of f}
\eeq
which implies $\|F^{\f12}\pa_x^{k+1} f(\cdot,t)\|_{L^2}\leq C_{k+1}(t)$ (cf.\;\eqref{eqn: boundedness of growth factor}). % on $[t_0,+\infty)$ for any $t_0>0$.
In addition, since
\[
\|f\|_{\dot{H}^{k+\f32}}^2
\geq \big\|f^{\f12} F^{\f12}\pa_x^{k+1} f\big\|_{L^2}^2 \geq
f_*(t)\big\|F^{\f12}\pa_x^{k+1} f\big\|_{L^2}^2,
\]
\eqref{eqn: inequality for weight H k seminorm of f} also implies that, for $t\gg 1$ (cf.\;\eqref{eqn: smallness of RHS coefficient}),
\[
\f{d}{dt}\big\|F^{\f12}\pa_x^{k+1} f\big\|_{L^2}^2 + \f12 f_*(t)\big\|F^{\f12}\pa_x^{k+1} f\big\|_{L^2}^2
\leq 0.
\]
This gives the exponential decay of $\|F^{\f12}\pa_x^{k+1} f(\cdot,t)\|_{L^2}$ as $t\to +\infty$.
Therefore, $\|F^{\f12}\pa_x^{k+1} f(\cdot,t)\|_{L^2}$ satisfies \eqref{eqn: claim for smoothness}.

Besides, \eqref{eqn: bound for weight L^2 norm of H k+1 seminorm of f} also implies that there exists $t_*'\in [t-2^{-2k-1}t, t-2^{-2k-2}t]$, such that
\[
\begin{split}
\|f(\cdot,t_*')\|_{\dot{H}^{k+\f32}}^2
\leq &\;
C 2^{2k}t^{-1}\exp\left(\int_{t_*}^t C_{k+1}(\tau) \big\|\sqrt{f (\cdot,\tau)} \big\|_{\dot{H}^{1}}^2\,d\tau\right)
\big\|F^{\f12}\pa_x^{k+1} f(\cdot,t_*)\big\|_{L^2}^2 %\\
\leq  C_{k+1}(t).
%C\big(k+1, t,\|f_0\|_{L^1},\|F_0\|_{L^1}\big).
\end{split}
\]
Then we can use \eqref{eqn: inequality for H k+1/2 seminorm of h} and \eqref{eqn: bounding H k+1/2 seminorm of h with the same seminorm of f} and argue as in the previous step to show that $\|h(\cdot,t)\|_{\dot{H}^{k+\f32}}$ satisfies \eqref{eqn: claim for smoothness}.
Moreover, there exists $t''_*\in [t-2^{-2k-2}t,t-2^{-2k-3}t]$, such that
$\|h(\cdot,t_*'')\|_{\dot{H}^{k+2}} \leq C_{k+1}(t)$.
%\leq C(k+1, t,\|f_0\|_{L^1},\|F_0\|_{L^1})$.
We omit the details.

By induction, we thus prove \eqref{eqn: claim for smoothness} for all $k\in \BZ_+$.
\end{step}

\begin{step}[Convergence to $f_\infty$]
For all $k\in \BZ_+$,
\[
\|f\|_{\dot{H}^k}^2
= \big\|f^{\f12}F^{\f12}\pa_x^k f(\cdot,t)\big\|_{L^2}^2
\leq \|f\|_{L^\infty}\big\|F^{\f12}\pa_x^k f(\cdot,t)\big\|_{L^2}^2.
\]
By Lemma \ref{lem: L inf bound}, $\|f(\cdot,t)\|_{\dot{H}^k}$ is bounded on $[t_0,+\infty)$ for arbitrary $t_0>0$, and it decays exponentially as $t\to +\infty$.
Hence, $\|f(\cdot,t)\|_{\dot{H}^{\f12}}$ satisfies the same properties.
Combining this with Lemma \ref{lem: energy bound and decay of L^p norms}, we find that $\|f(\cdot,t)\|_{L^1}$ is non-increasing in time and its convergence at $t\to +\infty$ is exponential.
Hence, $f(\cdot,t)$ is smooth in space-time in $\BT\times [t_0,+\infty)$ for any $t_0>0$, and it converges exponentially to some constant as $t\to +\infty$ in $H^k(\BT)$-norms for all $k\in \BN$.
Recall that $\|F\|_{L^1}$ is conserved (cf.\;Lemma \ref{lem: energy bound and decay of L^p norms}), so the constant must be $f_\infty$ defined in \eqref{eqn: equilibrium}.
This proves \eqref{eqn: claim for decay} with $j=0$.

The claim \eqref{eqn: claim for decay} with $j\in \BZ_+$ immediately follows from the case $j = 0$ and equation \eqref{eqn: the main equation}.
\end{step}
\end{proof}
\end{prop}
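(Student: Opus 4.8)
The plan is to establish \eqref{eqn: claim for smoothness} by induction on $k\in\BZ_+$, using the interpolation identities of Lemma \ref{lem: connecting different quantities} to pass between Sobolev norms of $h$ and weighted norms of $f$, and the differential inequalities of Lemma \ref{lem: generalized H k+1/2 estimate for h} to propagate bounds forward in time. First I would record what is already controlled: by Lemma \ref{lem: lower bound}, Lemma \ref{lem: L inf bound}, and Proposition \ref{prop: improved H 1/2 estimate for h}, each of $\|f(\cdot,t)\|_{L^\infty}$, $\|F(\cdot,t)\|_{L^\infty}$, $\|h(\cdot,t)\|_{\dot{H}^{1/2}}$, and hence (via \eqref{eqn: bounding H1 norm of root f}) $\|\sqrt{f(\cdot,t)}\|_{\dot{H}^1}$, is bounded by an admissible function $\CC(t,\|f_0\|_{L^1},\|F_0\|_{L^1})$, i.e.\ one of the type allowed for $\CC_k$ in the statement (finite for $t>0$, monotone, exponentially decaying); once these are substituted, the coefficients $C_k$ appearing in Lemma \ref{lem: generalized H k+1/2 estimate for h} and Lemma \ref{lem: connecting different quantities} become such admissible functions. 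Two structural facts then drive everything: (a) integrating Lemma \ref{lem: H 1/2 estimate for h} gives $\int_{t_0}^{+\infty}\|\sqrt{f(\cdot,\tau)}\|_{\dot{H}^1}^2\,d\tau\le\tfrac14\|h(\cdot,t_0)\|_{\dot{H}^{1/2}}^2<\infty$ for every $t_0>0$, so the weight $\|\sqrt f\|_{\dot{H}^1}^2$ is globally time-integrable; (b) $\|F(\cdot,t)\|_{L^\infty}$ is non-increasing so $f_*(t)$ is non-decreasing, while $\|\sqrt f\|_{\dot{H}^1}$ is non-increasing (Lemma \ref{lem: H 1 estimate for sqrt f}) and square-integrable in time, hence tends to $0$; together these yield the smallness $C_k(t)\|\sqrt{f(\cdot,t)}\|_{\dot{H}^1}^2\le\tfrac12 f_*(t)$ for all $t$ past a threshold depending only on $k$, $\|f_0\|_{L^1}$, $\|F_0\|_{L^1}$.

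For the induction I would carry along, in addition to the bound \eqref{eqn: claim for smoothness} at level $k$, the auxiliary statement that for every $t>0$ there is a ``good time'' $t_*$ in a dyadic window just below $t$ at which $\|h(\cdot,t_*)\|_{\dot{H}^{k+1}}$ is bounded by an admissible function, produced by integrating the dissipation term in the Gr\"onwall form of Lemma \ref{lem: generalized H k+1/2 estimate for h} over that window. The step from $k$ to $k+1$ has three parts. First, Lemma \ref{lem: connecting different quantities} bounds $\|F^{1/2}\partial_x^{k+1}f(\cdot,t_*)\|_{L^2}$ by $\|h(\cdot,t_*)\|_{\dot{H}^{k+1}}$ times an admissible factor, and running the differential inequality for $\|F^{1/2}\partial_x^{k+1}f\|_{L^2}^2$ from $t_*$ to $t$ yields the bound: the Gr\"onwall growth factor $\exp\big(\int_{t_*}^{t}C_{k+1}(\tau)\|\sqrt f\|_{\dot{H}^1}^2\,d\tau\big)$ stays bounded because $C_{k+1}$ stays bounded and $\|\sqrt f\|_{\dot{H}^1}^2$ is time-integrable, while for $t$ past the threshold, inserting $\|f\|_{\dot{H}^{k+3/2}}^2\ge f_*(t)\|F^{1/2}\partial_x^{k+1}f\|_{L^2}^2$ closes the inequality to $\tfrac{d}{dt}\|F^{1/2}\partial_x^{k+1}f\|_{L^2}^2+\tfrac12 f_*(t)\|F^{1/2}\partial_x^{k+1}f\|_{L^2}^2\le0$, hence exponential decay. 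Second, the dissipation appearing in that Gr\"onwall relation furnishes a good time $t_*'$ in a smaller dyadic window with $\|f(\cdot,t_*')\|_{\dot{H}^{k+3/2}}$ controlled; converting by Lemma \ref{lem: connecting different quantities} controls $\|h(\cdot,t_*')\|_{\dot{H}^{k+3/2}}$, and running the differential inequality for $\|h\|_{\dot{H}^{k+3/2}}^2$ from $t_*'$ to $t$ in the same way gives the bound, monotonicity, and exponential decay of $\|h(\cdot,t)\|_{\dot{H}^{k+3/2}}$. Third, its own dissipation term supplies the seed $\|h(\cdot,t_*'')\|_{\dot{H}^{k+2}}$ for the next round. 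The base case $k=1$ has the same shape, the $\|F^{1/2}\partial_x f\|_{L^2}$ part being already handled by the preliminary step and the $\|h\|_{\dot{H}^{3/2}}$ part seeded from Lemma \ref{lem: H 1 estimate for sqrt f} together with \eqref{eqn: bounding H1 norm of root f}.

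For the last assertions, $\|f(\cdot,t)\|_{\dot{H}^k}^2=\|f^{1/2}F^{1/2}\partial_x^kf\|_{L^2}^2\le\|f(\cdot,t)\|_{L^\infty}\|F^{1/2}\partial_x^kf(\cdot,t)\|_{L^2}^2$ shows $f\in C^\infty(\BT\times[t_0,+\infty))$ for every $t_0>0$, with every spatial seminorm decaying exponentially; since $\|f(\cdot,t)\|_{L^1}$ is non-increasing and $\ge2\pi f_\infty$ (Lemma \ref{lem: energy bound and decay of L^p norms}) while the oscillation of $f$ decays exponentially, $f(\cdot,t)$ converges in every $H^k(\BT)$ to a constant, and conservation of $\|F\|_{L^1}$ (same lemma) pins that constant to $f_\infty$, giving \eqref{eqn: claim for decay} for $j=0$; the cases $j\ge1$ follow by differentiating \eqref{eqn: the main equation} in time and inserting the $j=0$ estimates with product estimates. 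I expect the main obstacle to be keeping the bootstrap non-circular and uniform in time: the a priori lower bound for $f$ is double-exponentially small near $t=0$, so smoothness cannot be propagated from the initial time; the remedy is to use that the dissipation $\|\sqrt f\|_{\dot{H}^1}^2$ is globally time-integrable (the energy structure) to locate a good time at each regularity level and each dyadic scale, and that on $[t_0,+\infty)$ the Gr\"onwall coefficients are integrable while $f_*(t)$ is bounded below and $\|\sqrt f\|_{\dot{H}^1}\to0$, which upgrades local-in-time finiteness to a clean global bound with exponential decay — the bookkeeping of the generic functions $\CC_k$ and $C_k(t)$ through the nested extractions being the chief technical chore.
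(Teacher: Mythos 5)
Your proposal is correct and follows essentially the same route as the paper's proof: the same preliminary bounds (Lemmas \ref{lem: lower bound}, \ref{lem: L inf bound}, Proposition \ref{prop: improved H 1/2 estimate for h}), the same pair of structural inputs (time-integrability of $\|\sqrt{f}\|_{\dot{H}^1}^2$ from Lemma \ref{lem: H 1/2 estimate for h} and its eventual smallness against $f_*(t)$), the same induction carrying a ``good time'' at the next regularity level supplied by the dissipation term in each Gr\"onwall relation, and the same closing argument for convergence to $f_\infty$. One small slip worth noting: you lump $\|f(\cdot,t)\|_{L^\infty}$ and $\|F(\cdot,t)\|_{L^\infty}$ together with $\|h\|_{\dot{H}^{1/2}}$ and $\|\sqrt f\|_{\dot{H}^1}$ as ``bounded by an admissible $\CC$ (exponentially decaying)''; the first two only stay bounded (they converge to $f_\infty$ and $f_\infty^{-1}$), which is why the paper introduces the separate class $C_k(t)$ for them — but since your downstream uses of the coefficients require only boundedness (for the Gr\"onwall growth factor and the threshold estimate), the argument is unaffected.
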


\subsection{Time-continuity at $t = 0$}
\label{sec: continuity at initial time}
We end this section by showing the time-continuity of the solution at $t = 0$ in a suitable sense.
Note that the time-continuity at all positive times is already covered in Proposition \ref{prop: boundedness and decay of H^k norms of h}.

\begin{lem}
\label{lem: continuity in Wasserstein at time 0}
Suppose $f$ is a positive strong solution of \eqref{eqn: the main equation}.
Then for any $0<t\leq \|f_0\|_{L^1}^{-1}$ and $\al\in (0,\f15)$,
\beqo
W_1(F_0,F(\cdot,t))
\leq  C_\al \|F_0\|_{L^1}
\big(t\|F_0\|_{L^1}^{-1}\big)^{\f{1-5\al}{2}} \big(\|f_0\|_{L^1}\|F_0\|_{L^1}\big)^{\f{1-\al}{2}}.
\eeqo
Here $W_1(\cdot,\cdot)$ denotes the 1-Wasserstein distance \cite[Chapter 6]{villani2009optimal}, and $C_\al>0$ is a universal constant depending on $\al$.

\begin{proof}
Given $t_*\in(0,\|f_0\|_{L^1}^{-1}]$ and $\d<t_*$, let $\eta_\d(t)$ be a smooth non-increasing cutoff function on $[0,+\infty)$, such that $\eta_\d \equiv 1$ on $[0,t_*-\d]$ and $\eta_\d \equiv 0$ on $[t_*,+\infty)$.
Let $\phi\in C^\infty(\BT)$.
We take $\va(x,t) = \phi(x)\eta_\d(t)$ in \eqref{eqn: weak formulation} and derive that
\beqo
\int_{\BT} \phi(x) F_0(x)\,dx + \int_0^{t_*} \eta_\d'(t) \int_\BT \phi(x) F(x,t)\,dx\,dt
= \int_0^{t_*}\eta_\d(t)\int_{\BT} \phi'(x) \cdot \CH f \cdot F\,dx\,dt.
\eeqo
Sending $\d \to 0^+$ and using the smoothness of $f$ and $F$ at positive times, we obtain
\beqo
\int_{\BT} \phi(x) \big(F_0(x)- F(x,t_*)\big)\,dx
= \int_0^{t_*} \int_{\BT} \phi'(x) \cdot \CH f \cdot F\,dx\,dt.
\eeqo
%Hence, %if we additionally assume $t_*\leq \|f_0\|_{L^1}^{-1}$,
By Corollary \ref{cor: bound for H1 norm} and the fact that $\|F(\cdot,t)\|_{L^1}$ is conserved, % with e.g.\;$\al = \f1{10}$,
\beqo
\begin{split}
&\;\left|\int_{\BT} \phi(x) \big(F_0(x)- F(x,t_*)\big)\,dx\right|\\
%\leq &\; \int_0^{t_*} \|\phi'\|_{L^\infty} \|\CH f\|_{L^\infty} \|F\|_{L^1}\,dt\\
%
\leq &\; C_\al \|\phi'\|_{L^\infty}\|F_0\|_{L^1}   \int_0^{t_*} \|f(\cdot,t)\|_{\dot{C}^\al} \,dt\\
\leq &\; C_\al  \|\phi'\|_{L^\infty} \|F_0\|_{L^1}
\big(t_*\|F_0\|_{L^1}^{-1}\big)^{\f{1-5\al}{2}}\big(\|f_0\|_{L^1}\|F_0\|_{L^1}\big)^{\f{1-\al}{2}}.
%\big|\ln \ln \big(t_*^{-1}\|F_0\|_{L^1}\big)\big|^{2\al}.
\end{split}
\eeqo
By the equivalent definition of the $1$-Wasserstein distance (see \cite[Chapter 6]{villani2009optimal}) %,
\beqo
%\begin{split}
W_1(F_0,F(\cdot,t_*)) = %&\;
\sup_{\|\phi\|_{\mathrm{Lip}}\leq 1} \int_{\BT} \phi(x) \big(F_0(x)- F(x,t_*)\big)\,dx, \eeqo
we conclude.
%Here $W_1(\cdot,\cdot)$ denotes the 1-Wasserstein distance \cite[Chapter 6]{villani2009optimal}.
\end{proof}
\end{lem}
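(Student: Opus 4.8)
The plan is to convert the desired modulus-of-continuity statement at $t=0$ into its Kantorovich--Rubinstein dual form, reduce the dual quantity to a space-time integral of $\CH f\cdot F$ via the conservation-law weak formulation \eqref{eqn: weak formulation}, and then control that integral by the time-integrable H\"older bound of Corollary \ref{cor: bound for H1 norm}. The only genuinely nontrivial ingredient is the latter; everything else is assembly.

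\emph{Step 1: an integral identity.} Fix $t_*\in(0,\|f_0\|_{L^1}^{-1}]$. For $\d\in(0,t_*)$ pick a smooth non-increasing temporal cutoff $\eta_\d$ with $\eta_\d\equiv 1$ on $[0,t_*-\d]$ and $\eta_\d\equiv 0$ on $[t_*,+\infty)$, and for $\phi\in C^\infty(\BT)$ take $\va(x,t)=\phi(x)\eta_\d(t)$ in \eqref{eqn: weak formulation}. Since $f$ is a positive strong solution which, by the results of Sections \ref{sec: positivity and boundedness}--\ref{sec: global higher-order estimates}, is smooth and bounded above and below at positive times, it is in particular a weak solution in the sense of Definition \ref{def: weak solution} (the only point to check, $\CH f\cdot F\in L^1_{loc}(\BT\times[0,T))$ near $t=0$, is supplied by Step 2). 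Sending $\d\to 0^+$ --- a limit that only involves positive times, where $\eta_\d'$ concentrates at $t_*$ and $\eta_\d\to\mathbf 1_{[0,t_*)}$ while $f,F$ are smooth --- yields
\[
\int_{\BT}\phi(x)\,(F_0(x)-F(x,t_*))\,dx=\int_0^{t_*}\!\!\int_{\BT}\phi'(x)\,\CH f(x,t)\,F(x,t)\,dx\,dt.
\]

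\emph{Step 2: bounding the right-hand side.} From the principal-value representation $\CH f(x)=\tfrac1\pi\,\mathrm{p.v.}\!\int_\BT\tfrac{f(y)-f(x)}{2\tan((x-y)/2)}\,dy$ and the integrability of $|x-y|^\al/|\tan((x-y)/2)|$ on $\BT$, one obtains $\|\CH f(\cdot,t)\|_{L^\infty}\le C_\al\|f(\cdot,t)\|_{\dot C^\al}$ for $\al\in(0,1)$. Recalling $\|F(\cdot,t)\|_{L^1}\equiv\|F_0\|_{L^1}$ from Lemma \ref{lem: energy bound and decay of L^p norms}, this gives
\[
\Big|\int_0^{t_*}\!\!\int_{\BT}\phi'\,\CH f\,F\,dx\,dt\Big|\le C_\al\,\|\phi'\|_{L^\infty}\,\|F_0\|_{L^1}\int_0^{t_*}\|f(\cdot,t)\|_{\dot C^\al}\,dt,
\]
and the time integral on the right is finite for $\al\in(0,\tfrac15)$ by Corollary \ref{cor: bound for H1 norm} (this also retroactively justifies the $L^1_{loc}$ claim in Step 1). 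Plugging in the simpler form of Corollary \ref{cor: bound for H1 norm} bounds the left side of the identity in Step 1 by $C_\al\|\phi'\|_{L^\infty}\,t_*^{(1-5\al)/2}\|f_0\|_{L^1}^{(1-\al)/2}\|F_0\|_{L^1}^{1+2\al}$.

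\emph{Step 3: duality.} Since $F_0$ and $F(\cdot,t_*)$ are non-negative densities sharing the total mass $\|F_0\|_{L^1}$, the Kantorovich--Rubinstein formula (see \cite[Chapter 6]{villani2009optimal}) gives $W_1(F_0,F(\cdot,t_*))=\sup_{\|\phi\|_{\mathrm{Lip}}\le 1}\int_\BT\phi\,(F_0-F(\cdot,t_*))\,dx$, and on $\BT$ one has $\|\phi\|_{\mathrm{Lip}}=\|\phi'\|_{L^\infty}$. Taking the supremum over $\|\phi'\|_{L^\infty}\le 1$ in the bound from Step 2 and rearranging the powers of $\|F_0\|_{L^1}$ (writing $\|F_0\|_{L^1}^{1+2\al}=\|F_0\|_{L^1}\cdot\|F_0\|_{L^1}^{-(1-5\al)/2}\cdot\|F_0\|_{L^1}^{(1-\al)/2}$) produces exactly the asserted estimate. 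I expect no real obstacle here: the hard analytic content --- the near-$t=0$ time-integrability $\int_0^{t_*}\|f(\cdot,t)\|_{\dot C^\al}\,dt<\infty$ despite the extremely weak lower bound of Lemma \ref{lem: lower bound} --- is already done in Corollary \ref{cor: bound for H1 norm}, so the remaining work is bookkeeping (the $\CH\colon\dot C^\al\to L^\infty$ bound, admissibility of $f$ as a weak solution, and the normalization in the $W_1$ duality for measures of equal, not necessarily unit, mass). If a sharper constant is wanted, one keeps the factor $(\|f_0\|_{L^1}-2\pi f_\infty)^{(1-2\al)/2}\|f_0\|_{L^1}^{\al/2}$ from Corollary \ref{cor: bound for H1 norm} rather than bounding it by $\|f_0\|_{L^1}^{(1-\al)/2}$.
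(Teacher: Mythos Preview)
Your proposal is correct and follows essentially the same route as the paper: test the weak formulation \eqref{eqn: weak formulation} against $\phi(x)\eta_\d(t)$, pass $\d\to 0^+$ to get the integral identity, bound $\|\CH f\|_{L^\infty}\le C_\al\|f\|_{\dot C^\al}$ together with $\|F(\cdot,t)\|_{L^1}\equiv\|F_0\|_{L^1}$ and Corollary \ref{cor: bound for H1 norm}, and conclude via Kantorovich--Rubinstein duality. The only differences are expository --- you spell out the $\dot C^\al\to L^\infty$ bound for the Hilbert transform and the power-counting $\|F_0\|_{L^1}^{1+2\al}=\|F_0\|_{L^1}\cdot\|F_0\|_{L^1}^{-(1-5\al)/2}\cdot\|F_0\|_{L^1}^{(1-\al)/2}$ explicitly.
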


\section{Global Well-posedness}
\label{sec: global well-posedness}

\subsection{Well-posedness for band-limited positive initial data}
\label{sec: band-limited initial data}
First we consider positive and band-limited initial data $f_0$; the latter assumption means that $f_0$ is a finite linear combination of Fourier modes.
To prove the well-posedness in this case, it might be feasible to turn the equation \eqref{eqn: the main equation} back to its Lagrangian formulation (cf.\;\eqref{eqn: contour dynamic equation 1D prelim} and \eqref{eqn: contour dynamic equation 1D}), and argue as in \cite{lin2019solvability,mori2019well}.
Indeed, we can recover the string configuration $X_0(s)$ from $f_0(x)$ through \eqref{eqn: def of f} (also see the proof of Corollary \ref{cor: rephrasing in Lagrangian} in Section \ref{sec: existence for general initial data}), and $X_0$ should be smooth and satisfy the well-stretched assumption (cf.\;Remark \ref{rmk: meaning of max and min principle}).
Nonetheless, here we present a more straightforward approach by taking advantage of the special nonlinearity of \eqref{eqn: the main equation}.

Define the Fourier transform of $f\in L^1(\BT)$ as in \eqref{eqn: Fourier transform}.
Denote $\bar{f}:= \f{1}{2\pi}\int_\BT f(x)\,dx = \CF(f)_0$.
We write \eqref{eqn: the main equation} as
\[
\pa_t f + \bar{f}(t) (-\D)^{\f12} f = \CH f\cdot \pa_x f - (f-\bar{f})\cdot (-\D)^{\f12} f.
\]
Taking the Fourier transform on both sides yields, for any $m\in \BZ$,
\[
\begin{split}
\f{d}{dt} \hat{f}_m +|m|\bar{f}(t)\hat{f}_m
= &\; \sum_{n\neq m} -i\mathrm{sgn}(m-n)\hat{f}_{m-n} \cdot in\hat{f}_n -\hat{f}_{m-n} \cdot |n|\hat{f}_n\\
= &\; - \sum_{n\neq 0,m}  \big(1-\mathrm{sgn}(m-n)\mathrm{sgn}(n)\big) \hat{f}_{m-n}\cdot |n|\hat{f}_n.
\end{split}
\]
Given $n\neq 0,m$, we observe that $\mathrm{sgn}(m-n)\mathrm{sgn}(n) \neq 1$ only when $(m-n)$ and $n$ have opposite signs, which implies $\max\{|m-n|,|n|\}> |m|$.
Hence, for $m\geq 0$,
\[
\begin{split}
\f{d}{dt} \hat{f}_m
= &\; - m\bar{f}(t)\hat{f}_m
- \sum_{\max\{|m-n|,|n|\}> m} 2\hat{f}_{m-n}\cdot |n|\hat{f}_n\\
%= &\;- \sum_{\max\{|m-n|,|n|\}\geq m} \f12 \big(|n|+|m-n|\big) \hat{f}_{m-n}\hat{f}_n\\
%= &\;- \sum_{n\leq 0}\f12 \big(|n|+|m-n|\big) \hat{f}_{m-n}\hat{f}_n
%- \sum_{n\geq m}\f12 \big(|n|+|m-n|\big) \hat{f}_{m-n}\hat{f}_n\\
= &\;- m\bar{f}(t)\hat{f}_m - \sum_{n< 0} 2\big(|n|+|m-n|\big) \hat{f}_{m-n}\hat{f}_n,
\end{split}
\]
i.e.,
\beq
\f{d}{dt} \hat{f}_m
= - m \bar{f}(t)\hat{f}_{m}
- \sum_{n>0} 2(m+2n) \hat{f}_{m+n}\overline{\hat{f}_n}.
\label{eqn: equation in Fourier space}
\eeq
The equation for $\hat{f}_m$ with $m<0$ can be obtained by taking the complex conjugate.
Formally, this immediately implies that, if there exists some $K\in \BN$ such that $\CF(f_0)_k = 0$ for all $|k|> K$,
then $\hat{f}_k(t) \equiv 0$ for all $t\geq 0$ and $|k|> K$, and moreover, $\hat{f}_{\pm K}'(t) = -K \bar{f}(t)\hat{f}_{\pm K}(t)$.
Hence, we have the following well-posedness result.

\begin{prop}[Proposition \ref{prop: band-limited initial data intro}]
\label{prop: band-limited initial data}
Suppose $f_0>0$ is band-limited, i.e., there exists some $K\in\BN$, such that $\CF(f_0)_k = 0$ for all $|k|>K$.
Then \eqref{eqn: the main equation} has a global strong solution $f = f(x,t)$ that is also positive and band-limited, such that $\hat{f}_k(t) = 0$ for all $|k|>K$ and $t\geq 0$.
Such a solution is unique, and it can be determined by a finite family of ODEs among \eqref{eqn: equation in Fourier space} with the initial condition $\hat{f}_k(0) = \CF(f_0)_k$.

\begin{proof}
%In this case, finding the solution to \eqref{eqn: the main equation} boils down to
As we pointed out, it is enough to solve the ODEs for $\hat{f}_k(t)$ with $k \in [-K,K]$ (see \eqref{eqn: equation in Fourier space}) and simply put $\hat{f}_k(t) \equiv 0$ if $|k|>K$.
The local well-posedness of the ODEs is trivial.
The local ODE solution can be extended to the time interval $[0,+\infty)$ thanks to the boundedness of $\|f(\cdot,t)\|_{L^2}$ (see Lemma \ref{lem: energy bound and decay of L^p norms}) as well as the Paserval's identity.
Positivity of $f$ follows from Lemma \ref{lem: max principle}.
\end{proof}
\end{prop}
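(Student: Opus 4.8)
The plan is to exploit the triangular-in-frequency structure of \eqref{eqn: equation in Fourier space}: if $\hat f_k$ vanishes for $|k|>K$, the evolution of the remaining modes closes into a finite, autonomous, polynomial ODE system. Concretely, I would set $\hat f_k(t)\equiv 0$ for $|k|>K$, impose the reality constraints $\hat f_{-k}(t):=\overline{\hat f_k(t)}$ and $\hat f_0(t)\in\BR$, and consider the finite system given by \eqref{eqn: equation in Fourier space} for $k=0,1,\dots,K$ (the sum on the right terminating at $j=K-k$, since $\hat f_{k+j}=0$ for $j>K-k$), with initial data $\hat f_k(0)=\CF(f_0)_k$. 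The right-hand side is quadratic in $(\hat f_0,\dots,\hat f_K)$, hence locally Lipschitz, so standard ODE theory yields a unique maximal solution on an interval $[0,T^*)$; the constraint $\hat f_0\in\BR$ is preserved because $\f{d}{dt}\hat f_0=-\sum_{j=1}^K 4j|\hat f_j|^2$ is real, and taking complex conjugates shows $(\hat f_m)_{m\in\BZ}$, extended by $\hat f_{-k}=\overline{\hat f_k}$ and $\hat f_k=0$ for $|k|>K$, solves \eqref{eqn: equation in Fourier space} for every $m$.

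Next I would reconstruct $f(x,t):=\sum_{|k|\le K}\hat f_k(t)e^{ikx}$ and check that it is a strong solution of \eqref{eqn: the main equation} on $\BT\times[0,T^*)$ in the sense of Definition \ref{def: strong solution}. Since $f(\cdot,t)$ is a trigonometric polynomial, $\CH f$, $(-\D)^{\f12}f$ and $\pa_x f$ are all classical and again polynomials in $x$, while $f$ is $C^\infty$ in $x$ and $C^1$ (indeed real-analytic) in $t$ because the $\hat f_k$ are; matching Fourier coefficients, \eqref{eqn: the main equation} is equivalent to \eqref{eqn: equation in Fourier space} for $|m|\le K$ together with the corresponding identities for $|m|>K$. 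The latter hold automatically: for $\hat f$ supported in $[-K,K]$ the right-hand side of \eqref{eqn: the main equation} has vanishing Fourier coefficients at all modes $|m|>K$ (this is exactly the observation recorded right after \eqref{eqn: equation in Fourier space}, stemming from the sign cancellation), so those identities read $0=\f{d}{dt}(0)$. Therefore $f$ is a strong solution on $[0,T^*)$.

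To globalize, I would first propagate positivity. Since $f(\cdot,0)=f_0$ is a positive trigonometric polynomial, $c_0:=\min_{\BT}f_0>0$. Let $\tau_+:=\sup\{\tau\le T^*:\ f>0\text{ on }\BT\times[0,\tau)\}$; on $[0,\tau_+)$ the function $f$ is a positive strong solution, so Lemma \ref{lem: max principle} gives $c_0\le f\le\max_{\BT}f_0$ there, and by continuity these bounds persist to $t=\tau_+$, contradicting maximality unless $\tau_+=T^*$. Hence $c_0\le f\le\max_{\BT}f_0$ on $\BT\times[0,T^*)$, so $|\hat f_k(t)|\le\|f(\cdot,t)\|_{L^\infty}\le\max_{\BT}f_0$ for all $k$; thus $(\hat f_0,\dots,\hat f_K)$ stays bounded and the maximal ODE solution is global, $T^*=+\infty$. (Alternatively, the needed bound follows from the non-increase of $\|f(\cdot,t)\|_{L^2}$ in Lemma \ref{lem: energy bound and decay of L^p norms} and Parseval's identity.) This yields the claimed global, positive, band-limited strong solution with $\hat f_k\equiv 0$ for $|k|>K$, determined by the finite ODE system.

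For uniqueness within this class, suppose $g$ is any global strong solution that is positive and band-limited with $\hat g_k(t)=0$ for $|k|>K$. Then $g(\cdot,t)$ is a trigonometric polynomial of degree $\le K$, and differentiating $\hat g_k(t)=\f{1}{2\pi}\int_\BT g(x,t)e^{-ikx}\,dx$ under the integral (justified by the $C^1_{loc}$ regularity of $g$) and inserting \eqref{eqn: the main equation} shows that $(\hat g_0,\dots,\hat g_K)$ solves precisely the same finite ODE system with the same initial data; uniqueness for that system forces $\hat g_k=\hat f_k$ for all $k$, i.e.\ $g=f$. The point that requires care is not any single estimate but the bookkeeping: verifying that the finite ODE system genuinely produces a pointwise strong solution of the nonlocal PDE (including that the equation is satisfied at the modes $|m|>K$), and arranging the positivity bootstrap so that Lemma \ref{lem: max principle} (and, if used, Lemma \ref{lem: energy bound and decay of L^p norms}) may legitimately be invoked before global existence is known.
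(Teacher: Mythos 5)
Your proof is correct and follows essentially the same approach as the paper: set the high modes to zero, solve the finite ODE system, extend globally via an a priori bound on the modes, and obtain positivity from Lemma \ref{lem: max principle}. The one difference is that you propagate positivity via an explicit bootstrap \emph{before} invoking the boundedness bounds, which is a more careful ordering than the paper's terse statement (the paper's $L^2$-bound from Lemma \ref{lem: energy bound and decay of L^p norms} already presumes positivity), but the underlying argument is the same.
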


\subsection{Weak solutions for initial data in the energy class}
\label{sec: existence for general initial data}

Now we turn to prove Theorem \ref{thm: main thm}.
Recall that we call $f_0\in L^1(\BT)$ an initial data in the energy class in the view of Remark \ref{rmk: energy class}.

\begin{proof}[Proof of Theorem \ref{thm: main thm} (part)]
In this part of the proof, we will establish existence of a global weak solution to \eqref{eqn: the main equation}, and then show it enjoys the properties \eqref{property: energy inequality in the main thm}-\eqref{property: long-time convergence} in Theorem \ref{thm: main thm}.
The remaining property \eqref{property: large-time analyticity} will be studied in Section \ref{sec: analyticity and sharp decay estimate} (see Corollary \ref{cor: analyticity of the solution}).

To obtain weak solutions corresponding to general initial data, we shall first approximate them by smooth solutions, and then pass to the limit in the weak formulation \eqref{eqn: weak formulation}.
In this process, we will need the Fej\'{e}r kernel on $\BT$ \cite[Chapter 1]{duoandikoetxea2001fourier}
\beq
\mathscr{F}_N(x) :=\f{1}{2\pi N} \f{\sin^2 (\f{N}{2}x)}{\sin^2 \f{x}{2}}
=\f{1}{2\pi}\sum_{|k|<N} \left(1-\f{|k|}{N}\right) e^{ikx}.
\label{eqn: Fejer kernel}
\eeq
For any given $N\in \BZ_+$, $\mathscr{F}_N$ is non-negative and band-limited, having integral $1$ on $\BT$.
$\{\mathscr{F}_N\}_{N = 1}^\infty$ is known to be an approximation of identity on $\BT$.

We proceed in several steps.

\setcounter{step}{0}
\begin{step}[Approximate solutions]
\label{step: approximate solutions}
Given $f_0\in L^1(\BT)$ with $F_0 = \f{1}{f_0} \in L^1(\BT)$, define their cut-offs for $M\in \BZ_+$ that
\[
f_{0,M} := \max\big\{\min\{f_0,M\},M^{-1}\big\},\quad F_{0,M} = \big(f_{0,M}\big)^{-1}.
\]
Then $f_{0,M},F_{0,M}\in [M^{-1},M]$ on $\BT$.
Clearly, as $M\to +\infty$, $f_{0,M}\to f_0$ and $F_{0,M}\to F_0$ in $L^1(\BT)$.
Next, for $N\in \BZ_+$, we let
\[
f_{0,M,N} := \mathscr{F}_N* f_{0,M},\quad F_{0,M,N} : = \big(f_{0,M,N}\big)^{-1}.
\]
Then $f_{0,M,N}$ is positive and band-limited; $f_{0,M,N}\in [M^{-1},M]$ on $\BT$.
As $N\to +\infty$, $f_{0,M,N}\to f_{0,M}$ in $L^1(\BT)$. % thanks to the property of approximation of identity.
In particular, we have $\|f_{0,M,N}\|_{L^1} =\|f_{0,M}\|_{L^1}$ for all $N\in \BZ_+$.
Moreover, $F_{0,M,N}\to F_{0,M}$ in $L^1(\BT)$ as $N\to +\infty$, because
$|F_{0,M,N}- F_{0,M}|\leq M^2 |f_{0,M,N}- f_{0,M}|$.
We also have $\|F_{0,M,N}\|_{L^1} \leq \|F_{0,M}\|_{L^1}$ for all $N\in \BZ_+$.
This is because, by Jensen's inequality,
\[
F_{0,M,N}(x) = \f{1}{\mathscr{F}_N* f_{0,M}(x)} \leq \left[\mathscr{F}_N* \left(\f{1}{f_{0,M}}\right)\right](x) = \mathscr{F}_N*F_{0,M}(x).
\]

By Proposition \ref{prop: band-limited initial data}, there exists a positive and band-limited $f_{M,N}$ that is a strong solution to \eqref{eqn: the main equation} with initial data $f_{0,M,N}$.
Define $F_{M,N} = (f_{M,N})^{-1}$.
Hence, for any $\va\in C_0^\infty(\BT\times [0,+\infty))$ and arbitrary $0<\d\ll 1$,
\beq
\int_{\BT} \va(x,0) F_{0,M,N}(x)\,dx
= \int_{\BT\times [0,\d]} + \int_{\BT\times [\d,+\infty)}   \big(\pa_x \va \cdot \CH f_{M,N} -\pa_t \va \big)  F_{M,N} \,dx\,dt.
\label{eqn: weak formulation of approximate solutions}
\eeq

\end{step}

\begin{step}[Taking the limit in $N$]
\label{step: taking the N limit}
Since $\|f_{0,M,N}\|_{L^1}$ and $\|F_{0,M,N}\|_{L^1}$ are uniformly bounded in $N$, by Lemma \ref{lem: energy bound and decay of L^p norms} and Proposition \ref{prop: boundedness and decay of H^k norms of h}, there exist smooth functions $f_{M}$ and $F_M$ defined on $\BT\times (0,+\infty)$, and subsequences $\{f_{M,N_j}\}_{j = 1}^\infty$ and $\{F_{M,N_j}\}_{j = 1}^\infty$, such that,
as $j\to +\infty$, $f_{M,N_j}\to f_M$ and $F_{M,N_j}\to F_M$ in $C_{loc}^k(\BT\times (0,+\infty))$ ($\forall\, k\in \BN$), and moreover, $f_{M,N_j}\rightharpoonup f_M$ in $L^2([0,+\infty); H^{\f12}(\BT))$.
It is then not difficult to verify the following facts.
\begin{enumerate}[(a)]
\item $F_M = \f1{f_M}$ on $\BT\times (0,+\infty)$; by Lemma \ref{lem: max principle}, $f_M, F_M\in [M^{-1},M]$ on $\BT\times (0,+\infty)$.
\item \label{property: conservation of L^1}
For all $t>0$,
\[
\|F_M(\cdot,t)\|_{L^1}
= \lim_{j\to +\infty} \|F_{M,N_j}(\cdot,t)\|_{L^1}
= \lim_{j\to +\infty} \|F_{0,M,N_j}\|_{L^1}
= \|F_{0,M}\|_{L^1}.
\]
\item \label{property: energy inequality}
For any $0<t_1\leq t_2$,
\beq
\f12 \|f_M(\cdot,t_2)\|_{L^1} + \int_{t_1}^{t_2} \|f_M(\cdot,\tau)\|_{\dot{H}^{1/2} }^2\,d\tau = \f12 \|f_M(\cdot,t_1)\|_{L^1} \leq \f12 \|f_{0,M}\|_{L^1}.
\label{eqn: energy relation prelim}
\eeq
Indeed, one first proves an identical energy relation for $f_{M,N_j}$ (cf.\;\eqref{eqn: energy estimate}), and then send $j\to +\infty$.
This in particular implies the energy inequality, i.e., for all $t>0$,
\beqo
\f12 \|f_M(\cdot,t)\|_{L^1} + \int_0^{t} \|f_M(\cdot,\tau)\|_{\dot{H}^{1/2} }^2\,d\tau \leq \f12 \|f_{0,M}\|_{L^1}.
\eeqo
We will show later that the energy equality is actually achieved (see \eqref{eqn: energy relation} below).
\item \label{property: uniform estimate for Holder norm}
    With $M\in \BZ_+$ fixed, the estimates in Corollary \ref{cor: bound for H1 norm} hold uniformly for all $f_{M,N_j}$ and $f_M$, where $f_0$ and $F_0$ there should be replaced by $f_{0,M}$ and $F_{0,M}$ respectively.
    Let us explain the estimates for $f_M$.
    Indeed, \eqref{eqn: bound for H1 norm} follows from the convergence $f_{M,N_j}\to f_M$ at all positive times, and then the proof of \eqref{eqn: time integral estimate of some Holder norm} for $f_M$ uses the property \eqref{property: energy inequality} above and a similar argument as in Corollary \ref{cor: bound for H1 norm}.
    As a consequence, $\CH f_M \cdot F_M \in L_{loc}^1(\BT\times [0,+\infty))$.

\item With $M\in \BZ_+$ fixed, the estimate \eqref{eqn: claim for decay} in Proposition \ref{prop: boundedness and decay of H^k norms of h} holds uniformly for all $f_{M,N_j}$ and $f_M$, again with $f_0$ and $F_0$ there replaced by $f_{0,M}$ and $F_{0,M}$.
    Observe that $F_{0,M,N}\to F_{0,M}$ in $L^1(\BT)$ implies $f_{\infty,M,N}\to f_{\infty,M}$ (they are defined by \eqref{eqn: equilibrium} in terms of $f_{0,M,N}$ and $f_{0,M}$ respectively).
    Hence, $f_M$ converges to $f_{\infty,M}$ exponentially as $t\to +\infty$.

\end{enumerate}

Now in \eqref{eqn: weak formulation of approximate solutions}, we take $N$ to be $N_j$ and send $j\to +\infty$.
Note that $\d$ is arbitrary in \eqref{eqn: weak formulation of approximate solutions}.
By virtue of \eqref{property: uniform estimate for Holder norm} above, as $\d\to 0^+$,
\[
\sup_{j\in \BZ_+}\int_{\BT\times [0,\d]}\big(\pa_x \va \cdot \CH f_{M,N_j}-\pa_t \va \big) F_{M,N_j} \,dx\,dt\mbox{ and }
\int_{\BT\times [0,\d]}\big(\pa_x \va \cdot \CH f_{M}-\pa_t \va \big) F_{M} \,dx\,dt
\]
both converge to $0$.
Hence,
\beq
\int_{\BT} \va(x,0) F_{0,M}(x)\,dx
= \int_{\BT\times [0,+\infty)} \pa_x \va \cdot \CH f_{M} \cdot F_{M} -\pa_t \va \cdot  F_{M} \,dx\,dt,
\label{eqn: weak formulation of approximate solutions 2nd round}
\eeq
Therefore, $f_M$ is a global weak solution to \eqref{eqn: the main equation} with initial data $f_{0,M}$.

We further confirm that $F_M$ takes the ``initial data" $F_{0,M}$ in a more quantitative sense.
For $t>0$,
\beqo
\begin{split}
&\;W_1\left(\f{F_{0,M}}{\|F_{0,M}\|_{L^1}}, \f{F_M(\cdot,t)}{\|F_{0,M}\|_{L^1}}\right)\\
\leq &\; W_1\left(\f{F_{0,M}}{\|F_{0,M}\|_{L^1}}, \f{F_{0,M,N_j}}{\|F_{0,M,N_j}\|_{L^1}}\right)
+ W_1\left(\f{F_{M,N_j}(\cdot,t)}{\|F_{0,M,N_j}\|_{L^1}}, \f{F_M(\cdot,t)}{\|F_{0,M}\|_{L^1}}\right)\\
&\;
+ W_1\left(\f{F_{0,M,N_j}}{\|F_{0,M,N_j}\|_{L^1}}, \f{F_{M,N_j}(\cdot,t)}{\|F_{0,M,N_j}\|_{L^1}}\right).
\end{split}
\eeqo
The first two terms on the right-hand side converge to zero as $j\to +\infty$ because of strong $L^1$-convergence.
Thanks to Lemma \ref{lem: continuity in Wasserstein at time 0} with e.g.\;$\al= \f{1}{10}$, the last term above satisfies that,
for arbitrary $j\in \BZ_+$ and $t\in (0,\|f_{0,M}\|_{L^1}^{-1}]$ (recall that $\|f_{0,M,N}\|_{L^1} =\|f_{0,M}\|_{L^1}$ for all $N\in \BZ_+$),
\[
W_1\left(\f{F_{0,M,N_j}}{\|F_{0,M,N_j}\|_{L^1}}, \f{F_{M,N_j}(\cdot,t)}{\|F_{0,M,N_j}\|_{L^1}}\right)
\leq C
\big(t\|F_{0,M,N_j}\|_{L^1}^{-1}\big)^{\f1{4}} \big(\|f_{0,M}\|_{L^1}\|F_{0,M,N_j}\|_{L^1}\big)^{\f9{20}}.
\]
Therefore,
\beq
W_1\left(\f{F_{0,M}}{\|F_{0,M}\|_{L^1}}, \f{F_M(\cdot,t)}{\|F_{0,M}\|_{L^1}}\right)
\leq
C
\big(t\|F_{0,M}\|_{L^1}^{-1}\big)^{\f1{4}} \big(\|f_{0,M}\|_{L^1}\|F_{0,M}\|_{L^1}\big)^{\f{9}{20}}
\label{eqn: continuity at initial time for the second approximate solution}
\eeq
for all $t\in (0,\|f_{0,M}\|_{L^1}^{-1}]$.
This further implies \cite{villani2009optimal}, as $t\to 0^+$,
\[
\f{F_M(\cdot,t)}{\|F_{0,M}\|_{L^1}}\mbox{ converges weakly in }\CP(\BT)
\mbox{ to }\f{F_{0,M}}{\|F_{0,M}\|_{L^1}},
\]
where $\CP(\BT)$ denotes the space of probability measure on $\BT$.
Since $|F_M|\leq M$, we further have that, for any $p\in [1,+\infty)$, $F_M(\cdot,t)$ converges weakly in $L^p(\BT)$ to $F_{0,M}$ as $t \to 0^+$.
Since $x\mapsto \f1x$ is convex on $(0,+\infty)$,
$\|f_{0,M}\|_{L^1} \leq \liminf_{t\to 0^+}\|f_{M}(\cdot,t)\|_{L^1}$ (cf.\;\cite[Chapter 2]{evans1990weak}).
This together with \eqref{eqn: energy relation prelim} implies that $\lim_{t\to 0^+} \|f_M(\cdot,t)\|_{L^1} = \|f_{0,M}\|_{L^1}$ and therefore, for any $t>0$,
\beq
\f12 \|f_M(\cdot,t)\|_{L^1} + \int_0^{t} \|f_M(\cdot,\tau)\|_{\dot{H}^{1/2} }^2\,d\tau = \f12 \|f_{0,M}\|_{L^1}.
\label{eqn: energy relation}
\eeq
\end{step}

\begin{step}[Taking the limit in $M$]
%
%Now we send $M\to +\infty$ in a similar manner.
By virtue of the properties of $f_{0,M}$ and $F_{0,M}$, as well as the uniform bounds for $f_M$ and $F_M$ proved above, there exist smooth functions $f$ and $F$ defined on $\BT\times (0,+\infty)$, and subsequences $\{f_{M_i}\}_{i = 1}^\infty$ and $\{F_{M_i}\}_{i = 1}^\infty$, such that,
as $i\to +\infty$, $f_{M_i}\to f$ and $F_{M_i}\to F$ in $C_{loc}^k(\BT\times (0,+\infty))$ ($\forall\, k\in \BN$), and moreover, $f_{M_i}\rightharpoonup f$ in $L^2([0,+\infty); H^{\f12}(\BT))$.
One can similarly verify: % the following facts.
\begin{enumerate}[(a')]
\item $F = \f1{f}$ on $\BT\times (0,+\infty)$.
\item For all $t>0$, $\|F(\cdot,t)\|_{L^1} = \|F_0\|_{L^1}$.
\item For any $t>0$,
\beq
\f12 \|f(\cdot,t)\|_{L^1} + \int_0^t \|f(\cdot,\tau)\|_{\dot{H}^{1/2} }^2\,d\tau \leq \f12 \|f_0\|_{L^1}.
\label{eqn: energy inequality final}
\eeq
\item The estimates in Corollary \ref{cor: bound for H1 norm} hold for $f$.
    Hence, $\CH f \cdot F \in L_{loc}^1(\BT\times [0,+\infty))$.

\item The estimate \eqref{eqn: claim for decay} in Proposition \ref{prop: boundedness and decay of H^k norms of h} holds for $f$, i.e., $f$ converges to $f_{\infty}$ exponentially as $t\to +\infty$, where $f_\infty$ is defined in terms of $f_0$.
\end{enumerate}
Now taking $M$ in \eqref{eqn: weak formulation of approximate solutions 2nd round} to be $M_i$ and sending $i \to +\infty$, we may similarly show that
\beqo
\int_{\BT} \va(x,0) F_{0}(x)\,dx
= \int_{\BT\times [0,+\infty)} \pa_x \va \cdot \CH f \cdot F  -\pa_t \va \cdot  F \,dx\,dt,
\eeqo
i.e., $f$ is a global weak solution to \eqref{eqn: the main equation} with initial data $f_{0}$.
Finally, with the help of \eqref{eqn: continuity at initial time for the second approximate solution}, we can argue analogously as before to show
\beq
\begin{split}
W_1\left(\f{F_{0}}{\|F_{0}\|_{L^1}}, \f{F(\cdot,t)}{\|F_{0}\|_{L^1}}\right)
\leq &\;
C
\big(t\|F_{0}\|_{L^1}^{-1}\big)^{\f14}
\big(\|f_{0}\|_{L^1}\|F_{0}\|_{L^1}\big)^{\f{9}{20}}
\end{split}
\label{eqn: estimate for W_1 distance}
\eeq
for all $t\in (0,\f12\|f_{0}\|_{L^1}^{-1}]$.
Hence, as $t\to 0^+$,
\beqo
\f{F(\cdot,t)}{\|F_{0}\|_{L^1}}\mbox{ converges weakly in }\CP(\BT)
\mbox{ to }\f{F_{0}}{\|F_{0}\|_{L^1}}.
%\label{eqn: weak convergence in P(T) at time 0}
\eeqo
This time we do not necessarily have $F(\cdot,t) \rightharpoonup F_0$ in $L^1(\BT)$ as $t\to 0^+$.
It is then not clear whether one can refine \eqref{eqn: energy inequality final} to become the energy equality.
However, if $F_0\in L^p$ for some $p>1$, one can still prove the weak convergence as $t\to 0^+$ (cf.\;part \eqref{part: decay of L^p norms of f and F} of Lemma \ref{lem: energy bound and decay of L^p norms}) and then the energy equality as before.

Other properties of $f$ and $F$, such as their upper and lower bounds at positive times, can be proved similarly by first applying the a priori estimates (cf.\;Section \ref{sec: positivity and boundedness}) to the approximate solutions and then taking the limits.
We skip the details.
\end{step}
This completes this part of the proof.
\end{proof}

Now let us prove Corollary \ref{cor: rephrasing in Lagrangian}.
The main idea is as follows.
In the view of \eqref{eqn: transporting velocity for particles}, formally, for each $s\in \BR$, $X(s,t)$ should satisfy the ODE
\[
\f{d}{dt}X(s,t) = -\f14 \CH \tilde{f} (X(s,t),t),
\]
where $\tilde{f}$ solves \eqref{eqn: equation for f} on $\BR$ with the periodic initial condition $f_0$ being suitably defined by $X_0$.
Here we used a different notation $\tilde{f}$ so that it is distinguished from $f$ that solves \eqref{eqn: the main equation} on $\BT$.
Hence, our task in the following proof is to define $f_0$ from $X_0$, find $\tilde{f}$ with the help of Theorem \ref{thm: main thm}, and then determine $X(s,t)$ by solving the ODE above.

\begin{proof}[Proof of Corollary \ref{cor: rephrasing in Lagrangian}]

Denote the inverse function of $s\mapsto X_0(s)$ to be $G_0(x)$.
Both $X_0$ and $G_0$ are absolutely continuous on any compact interval of $\BR$ and strictly increasing on $\BR$. %, so is $X_0$ because of its $H^1$-regularity.
Define $F_0(x) = G_0'(x)$ a.e.\;on $\BR$.
Then $F_0$ is non-negative and $2\pi$-periodic, and $\int_{-\pi}^\pi F_0(x)\,dx = 2\pi$.
Moreover, since $G_0\circ X_0(s) = s$ and $X_0\circ G_0(x) = x$ with $G_0$ and $X_0$ being absolutely continuous on any compact interval of $\BR$,
we have $F_0(X_0(s))X_0'(s) = 1$ a.e., and equivalently, $X_0'(G_0(x))G_0'(x) = 1$ a.e..
Hence,
\[
\int_{-\pi}^\pi  |X_0'(s)|^2 \,ds
%= \int_{-\pi}^\pi \f{X_0'(s)}{F_0(X_0(s))}  \,ds
= \int_{G_0(X_0(-\pi))}^{G_0(X_0(\pi))} \f{1}{F_0(X_0(s))^2}  \,ds
= %\int_{X_0(-\pi)}^{X_0(\pi)}\f{1}{F_0(X_0(s))}\,dX_0(s) =
\int_{X_0(-\pi)}^{X_0(\pi)}\f{1}{F_0(x)}\,dx.
\]
Define $f_0 = \f{1}{F_0}$.
Hence, $f_0$ is $2\pi$-periodic, and it is integrable on any interval of length $2\pi$, with the integral being equal to $\int_{-\pi}^{\pi}|X'_0(s)|^2\,ds$.

Now we take $f_0$ and $F_0$ as the initial data and solve \eqref{eqn: equation for f} on $\BT$.
Since \eqref{eqn: equation for f} contains the extra coefficient $\f14$ compared with \eqref{eqn: the main equation} (cf.\;Section \ref{sec: derivation of the model}), we first take $f$ and $F$ to be the global weak solution to \eqref{eqn: the main equation} that was constructed above with the initial data $f_0$ and $F_0$,
and then let
\[
\tilde{f}(x,t) = f\left(x,\f14 t\right),\quad \tilde{F}(x,t) = F\left(x,\f14 t\right).
\]
Clearly, they are global weak solutions to \eqref{eqn: equation for f} on $\BT \times [0,+\infty)$.
We extend them in space $2\pi$-periodically to the entire real line, still denoting them as $\tilde{f}$ and $\tilde{F}$ respectively.

In the view of \eqref{eqn: transporting velocity for particles}, we shall use the transporting velocity field $-\f14 \CH \tilde{f}$ to define a flow map on $\BR$.
Here $\CH \tilde{f}$ denotes the $2\pi$-periodic extension on $\BR$ of the Hilbert transform (on $\BT$) of the original $\tilde{f}$ defined on $\BT$; or equivalently, one may treat it as the Hilbert transform (on $\BR$) of the extended $\tilde{f}$.
To avoid the low regularity issue of $\tilde{f}$ at $t=0$, we specify the ``initial data" of the flow map at $t = 1$.
Given $x\in \BR$, let $\Psi_t(x)$ solve
\[
\f{d}{dt}\Psi_t (x) = -\f14 \CH \tilde{f}( \Psi_t(x),t),\quad \Psi_1(x) = x.
\]
Thanks to the properties of $\tilde{f}$ (see Theorem \ref{thm: main thm} and the proof in Section \ref{sec: existence for general initial data}), it is not difficult to show that
\begin{itemize}
\item $\Psi_t(x)$ can be uniquely defined for all $(x,t)\in \BR\times (0,+\infty)$.
     In $\BR\times (0,+\infty)$, the map $(x,t)\mapsto \Psi_t(x)$ is smooth.
\item For any $t>0$, the map $x\mapsto \Psi_t (x)$ is bijective, strictly increasing, and is a smooth diffeomorphism from $\BR$ to itself.
    $x\mapsto \Psi_t(x)-x$ is $2\pi$-periodic.
\item By \eqref{eqn: time integral estimate of some Holder norm} in Corollary \ref{cor: bound for H1 norm}, for any $x\in \BR$, $t\mapsto \Psi_t(x)$ is H\"{o}lder continuous for sufficiently small $t$. %, with the H\"{o}lder semi-norm being uniformly bounded in $x$.
    The H\"{o}lder exponent can be arbitrarily taken in $(0,\f12)$, and the corresponding H\"{o}lder semi-norm is uniformly bounded in $x$.
    Hence, $\lim_{t\to 0^+}\Psi_t(x)$ exists, denoted by $\Psi_0(x)$.
The convergence $\lim_{t\to 0^+}\Psi_t(x)= \Psi_0(x)$ is uniform in $x\in \BR$.
This further implies $x\mapsto \Psi_0(x)$ is continuous, non-decreasing, and $2\pi$-periodic in $\BR$.
Hence, $x\mapsto \Psi_0(x)$ is surjective from $\BR$ to itself.

\item
By the estimates in Proposition \ref{prop: boundedness and decay of H^k norms of h}, for each $x\in \BR$, as $t\to +\infty$, $\Psi_t(x)$ converges to some $\Psi_\infty(x)$ exponentially.
Hence, the function $t\mapsto \Psi_{t}(x)$ is H\"{o}lder continuous on $[0,+\infty)$, with the H\"{o}lder norm being uniformly bounded in $x$.
In addition, $\osc_{t\geq 0} \Psi_t(x) := \max_{t\geq 0}\Psi_t(x) - \min_{t\geq 0}\Psi_t(x)$ is uniformly bounded for all $x\in \BR$.

\item Thanks to the $\tilde{F}$-equation, for any $t>0$, $\tilde{F}(\cdot,t)$ is the push forward of $\tilde{F}(\cdot,1)$ under the map $x\mapsto\Psi_t(x)$, i.e., for any continuous function $\va(x)$ on $\BR$ with compact support,
\[
\int_\BR \va(x) \tilde{F}(x,t)\,dx = \int_\BR \va(\Psi_t(x))\tilde{F}(x,1)\,dx.
\]
Send $t\to 0^+$.
Because of \eqref{eqn: estimate for W_1 distance} and the uniform convergence from $\Psi_t$ to $\Psi_0$, we find
\[
\int_\BR \va(x) F_0(x)\,dx = \int_\BR \va(\Psi_0(x))\tilde{F}(x,1)\,dx,
\]
i.e, $F_0$ is the pushforward of $\tilde{F}(x,1)$ under the map $x\mapsto \Psi_0(x)$.
\item
The previous fact further implies $x\mapsto \Psi_0(x)$ is injective.
This is because otherwise, the facts that $x\mapsto \Psi_0(x)$ is continuous and non-decreasing, and that $\tilde{F}(x,1)$ has a positive lower bound (see \eqref{eqn: upper bound for f}) imply that $F_0$ must contain Dirac $\delta$-masses, which is a contradiction.
We can further show that $\Psi_0$ is a homeomorphism from $\BR$ to itself.
\end{itemize}

With the above $\{\Psi_t(x)\}_{t\geq 0}$, we define
$X(s,t): = \Psi_t \circ (\Psi_0)^{-1} \circ X_0(s)$.
We claim that this gives the desired function $X(s,t)$ in Corollary \ref{cor: rephrasing in Lagrangian}.

We first verify that, for any $s\in \BR$,
\beq
\int_{X(0,1)}^{X(s,1)} \tilde{F}(y,1)\,dy = s.
\label{eqn: relation between s and F}
\eeq
With $0<\epsilon\ll 1$, let $\chi_\epsilon(y)$ be a continuous function supported on $[X(0,1),X(s,1)]$, such that it is identically $1$ on $[X(0,1)+\epsilon,X(s,1)-\epsilon]$, and linear on $[X(0,1), X(0,1)+\epsilon]$ and $[X(s,1)-\epsilon, X(s,1)]$.
By the monotone convergence theorem,
\[
\begin{split}
\int_{X(0,1)}^{X(s,1)} \tilde{F}(y,1)\,dy
= &\;
\lim_{\epsilon \to 0^+} \int_{\BR} \chi_\epsilon(y)\tilde{F}(y,1)\,dy\\
= &\;
\lim_{\epsilon \to 0^+} \int_{\BR} \chi_\epsilon(\Psi_0^{-1}(y))F_0(y)\,dy
=
\int_{\BR} \chi_{[X(0,1),X(s,1)]}(\Psi_0^{-1}(y))F_0(y)\,dy
\end{split}
\]
By definition, $X(s,1) = (\Psi_0)^{-1}\circ X_0(s)$, and $F_0 = G_0'$ a.e., where $G_0$ is absolutely continuous on any compact interval of $\BR$ and is the inverse function of $X_0$.
So
\[
\int_{X(0,1)}^{X(s,1)} \tilde{F}(y,1)\,dy
=
\int_{\BR}\chi_{[X_0(0),X_0(s)]}(y) F_0(y)\,dy = G_0(X_0(s))-G_0(X_0(0)) = s.
\]
In general, for any $s_1,s_2\in \BR$,
\beq
\int_{X(s_1,1)}^{X(s_2,1)} \tilde{F}(y,1)\,dy
= s_2-s_1.
\label{eqn: relation between s and F more general}
\eeq

The equation \eqref{eqn: relation between s and F} implies that
\beqo
s\mapsto X(s,1)\mbox{ is the inverse function of the map }x\mapsto \int_{X(0,1)}^x \tilde{F}(y,1)\,dy.
\eeqo
By Theorem \ref{thm: main thm}, $\tilde{F}(\cdot,1)$ is smooth, positive, $2\pi$-periodic, and satisfies $\int_{-\pi}^\pi \tilde{F}(x,1)\,dx = 2\pi$.
Hence, $X(\cdot,1)$ is strictly increasing and smooth on $\BR$, with $s\mapsto X(s,1)-s$ being $2\pi$-periodic.
Then using the fact $X(s,t) = \Psi_t(X(s,1))$, \eqref{eqn: relation between s and F more general}, and the above-mentioned properties of $\{\Psi_t(x)\}_{t\geq 0}$, it is not difficult to show that $X(s,t)$ satisfies all the claims in Corollary \ref{cor: rephrasing in Lagrangian}; also see the derivation in Section \ref{sec: derivation of the model} and Remark \ref{rmk: energy class}.
In particular, we may take $c_\infty = \Psi_\infty\circ (\Psi_0)^{-1}\circ X_0(0)$, and $\f{1}{\lam(t)} = \|\tilde{F}(\cdot,t)\|_{L^\infty}$.
We omit the details.
\end{proof}

\subsection{Uniqueness of the dissipative weak solutions}
\label{sec: uniqueness}
As is mentioned before, to prove uniqueness, we need to focus on the dissipative weak solutions that satisfy more assumptions.

\begin{rmk}%[Entropy estimate for $F$]
\label{rmk: entropy estimate}
The inequality \eqref{eqn: entropy type estimate} in the definition of the dissipative weak solutions is motivated by the following calculation.
Suppose $f$ is a positive strong solution to \eqref{thm: main thm}.
Then $F := \f1f$ verifies
\[
\begin{split}
\f{d}{dt}\int_\BT \Phi(F(x,t))\,dx = &\; \int_\BT \Phi'(F)\pa_t F \,dx
= -\int_\BT \pa_x (\Phi'(F))\cdot \CH f\cdot  F\,dx\\
= &\; -\int_\BT F \Phi''(F) \pa_x F \cdot \CH f\,dx
=  -\int_\BT \pa_x \big(F \Phi'(F)-\Phi(F)\big) \cdot \CH f\,dx\\
= &\;- \int_\BT \big(\Phi(F)-F \Phi'(F)\big) (-\D)^{\f12} f\,dx.
\end{split}
\]
For the smooth positive solution $F$, taking $\Phi(y) = \f1y$ yields the energy estimate \eqref{eqn: energy estimate}.
Taking $\Phi(y) = y\ln y$ yields the entropy estimate \eqref{eqn: entropy estimate original}.
In general, when $\Phi\in C^1_{loc}((0,+\infty))$ is convex, $\Phi(y)-y\Phi'(y)$ is non-increasing in $y$.
Hence, using $F = \f{1}{f}$ and a derivation similar to the previous equation, we find
\[
\int_\BT \big(\Phi(F)-F \Phi'(F)\big) (-\D)^{\f12} f\,dx\geq 0.
\]
\end{rmk}

We first show that the dissipative weak solutions are bounded from above and from below if the initial data has such properties.
\begin{lem}
\label{lem: max principle of dissipative weak solution}
Under the assumption of Theorem \ref{thm: uniqueness}, let $f = f(x,t)$ be a dissipative weak solution to \eqref{eqn: the main equation} on $\BT\times [0,T)$.
Then for all $t\in (0,T)$,
\[
\|f(\cdot,t)\|_{L^\infty(\BT)}\leq \|f_0\|_{L^\infty(\BT)},\quad
\|F(\cdot,t)\|_{L^\infty(\BT)}\leq \|F_0\|_{L^\infty(\BT)}.
\]
As a result, $\pa_x f, \pa_x F \in L^2(\BT\times [0,T))$ with a priori estimates depending on $\|f_0\|_{L^\infty}$, $\|F_0\|_{L^\infty}$, and $\|\ln f_0\|_{\dot{H}^{\f12}}$.
\begin{proof}
%We only show the second inequality, as the first one can be proved analogously.
We choose in \eqref{eqn: entropy type estimate}
\[
\Phi(y) =
\begin{cases}
\big(y-\|f_0\|_{L^\infty}^{-1}\big)^2, & \mbox{if } y< \|f_0\|_{L^\infty}^{-1}, \\
0, & \mbox{if } y\in [\|f_0\|_{L^\infty}^{-1},\, \|F_0\|_{L^\infty}], \\
\big(y-\|F_0\|_{L^\infty}\big)^2, & \mbox{if } y> \|F_0\|_{L^\infty}.
\end{cases}
\]
Apparently, $\Phi\in C^1_{loc}((0,+\infty))$ is convex, so \eqref{eqn: entropy type estimate} gives that, for any $t\in (0,T)$,
\beqo
\int_\BT \Phi(F(x,t))\,dx + \int_0^t \int_\BT \big(\Phi(F)-F\Phi'(F)\big)(-\D)^{\f12}f\,dx\,dt
\leq 0.
\eeqo
The second term above is non-negative due to the convexity of $\Phi$ (see Remark \ref{rmk: entropy estimate}).
Hence, %we obtain that
\[
\int_\BT \Phi(F(x,t))\,dx\leq 0,
\]
which further implies $F(\cdot,t)\in [\|f_0\|_{L^\infty}^{-1}, \|F_0\|_{L^\infty}]$ a.e.\;on $\BT$.

The last claim follows from the first claim and the definition of the dissipative weak solution (also see \eqref{eqn: bounding H^1 norm of f using H^1 norm of sqrt f}).
\end{proof}
\end{lem}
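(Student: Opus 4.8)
The plan is to exploit property (b) in the definition of dissipative weak solutions with a well-chosen convex weight $\Phi$, so that the entropy-type inequality \eqref{eqn: entropy type estimate} forces $F(\cdot,t)$ to stay inside the interval $[\|f_0\|_{L^\infty}^{-1},\|F_0\|_{L^\infty}]$ for a.e.\ $t$. The key observation is that $\|f_0\|_{L^\infty}^{-1}$ is, by hypothesis $f_0\geq\lambda$ and $f_0\in L^\infty$, a lower bound for $F_0$, while $\|F_0\|_{L^\infty}$ is an upper bound; hence if we take $\Phi$ to vanish on this interval and to be quadratic outside of it, then $\Phi(F_0)\equiv 0$, and in particular $\Phi(F_0)\in L^1(\BT)$, so $\Phi$ is admissible in \eqref{eqn: entropy type estimate}. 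The explicit choice is the piecewise-quadratic function written in the statement, which is manifestly $C^1_{loc}((0,+\infty))$ (the derivative matches at the two breakpoints) and convex (its second derivative is $0$ or $2$).

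First I would verify admissibility and the two structural properties of $\Phi$: convexity and $C^1$-regularity across the breakpoints, and $\Phi(F_0)\equiv 0$ a.e. Then, plugging this $\Phi$ into \eqref{eqn: entropy type estimate} and using $\int_\BT\Phi(F_0)\,dx=0$, we obtain for every $t\in(0,T)$
\[
\int_\BT \Phi(F(x,t))\,dx + \int_0^t\int_\BT\big(\Phi(F)-F\Phi'(F)\big)(-\D)^{\f12}f\,dx\,dt \leq 0.
\]
The next step is to argue that the double-integral term is nonnegative. This is exactly the computation recorded in Remark \ref{rmk: entropy estimate}: for convex $\Phi$, the function $y\mapsto \Phi(y)-y\Phi'(y)$ is non-increasing, so $\big(\Phi(F(x))-F(x)\Phi'(F(x))\big)-\big(\Phi(F(y))-F(y)\Phi'(F(y))\big)$ and $f(y)-f(x) = F(x)^{-1}-F(y)^{-1} = (F(y)-F(x))/(F(x)F(y))$ have the same sign, which after symmetrizing the kernel representation of $(-\D)^{1/2}$ yields $\int_\BT(\Phi(F)-F\Phi'(F))(-\D)^{1/2}f\,dx\geq 0$ for each fixed $t$. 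Consequently $\int_\BT\Phi(F(x,t))\,dx\leq 0$, and since $\Phi\geq 0$, $\Phi(F(\cdot,t))\equiv 0$ a.e., i.e.\ $F(x,t)\in[\|f_0\|_{L^\infty}^{-1},\|F_0\|_{L^\infty}]$ for a.e.\ $x$, which is precisely the two claimed bounds.

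For the last assertion, once $f$ and $F$ are pointwise bounded above and below at positive times, the bound $\pa_x\sqrt f\in L^2(\BT\times[0,T))$ from part (a) of the definition of dissipative weak solution together with $f(\pa_x h)^2 = 4(\pa_x\sqrt f)^2$ (equivalently \eqref{eqn: bounding H^1 norm of f using H^1 norm of sqrt f}) gives $\pa_x f = 2\sqrt f\,\pa_x\sqrt f\in L^2$ with norm controlled by $\|f_0\|_{L^\infty}^{1/2}$ and $\|\ln f_0\|_{\dot H^{1/2}}$; similarly $\pa_x F = -F^2\pa_x f$ is controlled using $\|F_0\|_{L^\infty}$. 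The main subtlety I anticipate is not in any single estimate but in being careful that the formal manipulation of $(-\D)^{1/2}f$ against $\Phi(F)-F\Phi'(F)$ is legitimate at the regularity of a dissipative weak solution; however this is precisely what assumption (b) is designed to encode — \eqref{eqn: entropy type estimate} is postulated as an inequality holding for all admissible convex $\Phi$ — so no further justification is needed beyond checking that our $\Phi$ lies in the admissible class, which we did. The sign of the dissipation term is then the only genuine content, and it follows from the monotonicity argument of Remark \ref{rmk: entropy estimate} applied verbatim.
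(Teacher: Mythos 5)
Your proposal is correct and follows essentially the same approach as the paper: the same piecewise-quadratic $\Phi$ vanishing on $[\|f_0\|_{L^\infty}^{-1},\|F_0\|_{L^\infty}]$, the same appeal to \eqref{eqn: entropy type estimate} together with the sign argument from Remark~\ref{rmk: entropy estimate}, and the same deduction of the $L^2$ gradient bounds from property (a) of the dissipative weak solution. (There is a harmless sign slip in your algebra --- $f(y)-f(x)=F(y)^{-1}-F(x)^{-1}$, not $F(x)^{-1}-F(y)^{-1}$ --- but since what you actually use is that $\Phi(F)-F\Phi'(F)$ and $f=1/F$ are both non-increasing functions of $F$, the monotonicity argument and the conclusion are unaffected.)
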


Now we are ready to prove Theorem \ref{thm: uniqueness}.

\begin{proof}[Proof of Theorem \ref{thm: uniqueness}]
Under the assumptions on $f_0$ in Theorem \ref{thm: uniqueness}, we can construct a global dissipative weak solution by following the argument in Section \ref{sec: existence for general initial data}.
Indeed, denote $h_0 = \ln f_0$ and we first consider the solution $f_j$ to \eqref{eqn: the main equation} with the approximate initial data $f_{0,j} : = \exp(\mathscr{F}_j * h_0)$ that is positive and smooth.
Such an $f_j$ can be obtained by following the second step in the proof in Section \ref{sec: existence for general initial data}.
$f_j$ is smooth for all positive times;
one can verify that $f_j$ is a dissipative weak solution.
In particular, \eqref{eqn: entropy type estimate} follows from the convexity of $\Phi$ and the weak $L^p$-convergence $(p\in [1,+\infty))$ of $F_{j}(\cdot,t)$ to $F_{0,j}$ as $t\to 0^+$.
Then we take the limit $j\to +\infty$ to obtain the global dissipative weak solution $f$ corresponding to the given initial data $f_0$.

Next, we focus on the uniqueness.
With abuse of notations, suppose $f_1$ and $f_2$ are two dissipative weak solutions to \eqref{eqn: the main equation} on $\BT\times [0,T)$ for some $T>0$, both starting from the initial data $f_0$.
We want to show $f_1 = f_2$ in $\BT\times [0,T)$.
Without loss of generality, we may assume $f_2$ to be the global dissipative weak solution constructed above. % Theorem \ref{thm: main thm}.
Recall that $f_2$ is a positive strong solution to \eqref{eqn: the main equation} at all positive times.
Let $F_i : = \f{1}{f_i}$ $(i = 1,2)$.
%For any $t_0>0$, $F_2$ is a positive strong solution on $\BT\times [t_0,+\infty)$.
%For any $t_0>0$, $F_2$ is positive and smooth in $\BT\times [t_0,+\infty)$.
In what follows, the proof of $f_1 = f_2$ relies on a relative entropy estimate between $F_1$ and $F_2$.

Given $t_*\in (0,T)$ and $0<\d\ll 1$, let $\r_\d(t)$ to be a smooth cutoff function on $[0,+\infty)$, such that $\r_\d(t) \equiv 0$ on $[0,\f{\d}{2}]\cup [t_*-\f{\d}{2},+\infty)$, $\r_\d(t) \equiv 1$ on $[\d,t_*-\d]$, and in addition, $\r_\d$ is increasing on $[\f{\d}{2},\d]$ but decreasing on $[t_*-\d, t_*-\f{\d}2]$.
Since $F_1$ is a weak solution to \eqref{eqn: equation for big F}, we may take $\va = \r_\d(t) \ln F_2$ as the test function in \eqref{eqn: weak formulation} and derive that
\[
\int_{0}^{t_*} \r_\d'(t)\int_\BT F_1\ln F_2 \,dx\,dt
= \int_0^{t_*} \r_\d(t) \int_{\BT} \pa_x \ln F_2 \cdot \CH f_1 \cdot F_1 - F_1 \pa_t \ln F_2 \,dx\,dt.
\]
The right-hand side can be further simplified thanks to the smoothness of $F_2$
\beqo
\begin{split}
\int_{0}^{t_*} \r_\d'(t)\int_\BT F_1\ln F_2 \,dx\,dt
= &\; \int_0^{t_*} \r_\d(t) \int_{\BT} \f{F_1}{F_2} \big(\pa_x F_2 \cdot \CH f_1 -  \pa_x (\CH f_2\cdot F_2)\big) \, dx\,dt\\
= &\; \int_0^{t_*} \r_\d(t) \int_{\BT} \f{F_1}{F_2} \cdot \pa_x F_2 \cdot \CH (f_1-f_2) -  F_1 (-\D)^{\f12} f_2 \, dx\,dt.
\end{split}
\eeqo
Next we will send $\d \to 0^+$.
Recall that Lemma \ref{lem: max principle of dissipative weak solution} gives $f_i, F_i\in L^\infty(\BT\times [0,T))$ and $\pa_x f_i, \pa_x F_i\in L^2(\BT\times [0,T))$. %, all with a priori estimates depending only on $\|f_0\|_{L^\infty}$, $\|F_0\|_{L^\infty}$, and $\|\ln f_0\|_{\dot{H}^{\f12}}$.
This will be enough to pass to the limit on the right-hand side.
To handle the left-hand side, we want to show $\int_\BT F_1 \ln F_2\,dx$ is continuous on $[0,t_*]$.
In fact, by the definition of dissipative weak solution, $\ln F_i\in L^\infty([0,T);H^{\f12}(\BT))$.
The upper and lower bounds for $F_i$ implies $F_i\in L^\infty([0,T);L^\infty\cap H^{\f12}(\BT))$ (cf.\;\eqref{eqn: equivalence of H 1/2 norm of f and h}).
This together with \eqref{eqn: equation for big F} and $f_i\in L^2([0,T);H^1(\BT))$ implies $\pa_t F_i\in L^2([0,T); H^{-\f12}(\BT))$, which in turn gives e.g.\;$F_i\in C([0,T);L^2(\BT))$.
Then we obtain the time-continuity of $\int_\BT F_1 \ln F_2\,dx$.
Therefore, taking $\d \to 0^+$, we obtain
\beq
\begin{split}
&\;\int_\BT F_0\ln F_0 \,dx - \int_\BT F_1(x,t_*)\ln F_2(x,t_*) \,dx\\
= &\; \int_0^{t_*} \int_{\BT} \f{\pa_x F_2}{F_2} \cdot  F_1\cdot \CH (f_1-f_2) -  F_1 (-\D)^{\f12} f_2 \, dx\,dt.
\end{split}
\label{eqn: equation derived from the weak formulation}
\eeq

On the other hand, by the definition of the dissipative weak solution, with $\Phi(y) = y\ln y$ in \eqref{eqn: entropy type estimate},
\[
\int_\BT F_1(x,t_*) \ln F_1(x,t_*)\,dx - \int_\BT F_0 \ln F_0\,dx
\leq \int_0^{t_*} \int_\BT F_1 (-\D)^{\f12} f_1\,dx\,dt.
\]
Adding this and \eqref{eqn: equation derived from the weak formulation} yields an estimate for the relative entropy between $F_1$ and $F_2$
\beq
\begin{split}
&\;\int_\BT F_1(x,t_*) \ln \left(\f{F_1(x,t_*)}{F_2(x,t_*)}\right)dx\\
\leq &\; \int_0^{t_*}\int_\BT (f_1-f_2)(-\D)^{\f12}(F_1-F_2)\, dx\,dt
+\int_0^{t_*}\int_\BT \CH (f_1-f_2)\cdot (F_1-F_2) \f{\pa_x F_2}{F_2}\,dx\,dt.
\end{split}
\label{eqn: time derivative of relative entropy}
\eeq
We then derive that % (cf.\;\cite[\S\,]{caffarelli2013regularity})
\[
\begin{split}
&\;\int_\BT (f_1-f_2) (-\D)^{\f12} (F_1-F_2)\,dx\\%
%= &\; \int_\BT (f_1-f_2)(x)\cdot \f{1}{\pi}\pv \int_\BT \f{(F_1-F_2)(x)-(F_1-F_2)(y)}{4\sin^2(\f{x-y}{2})}\,dy\,dx\\
= &\; \f{1}{2\pi} \int_{\BT\times \BT } \big[(f_1-f_2)(x)-(f_1-f_2)(y)\big] \cdot \f{(F_1-F_2)(x)-(F_1-F_2)(y)} {4\sin^2(\f{x-y}{2})}\,dy\,dx\\
%
%= &\; -\f{1}{2\pi} \int_{\BT\times \BT } \big[f_1f_2(x)(F_1-F_2)(x)-f_1f_2(y)(F_1-F_2)(y)\big] \cdot \f{(F_1-F_2)(x)-(F_1-F_2)(y)} {4\sin^2(\f{x-y}{2})}\,dy\,dx\\
= &\; -\f{1}{2\pi} \int_{\BT\times \BT } f_1f_2(x)\cdot \f{[(F_1-F_2)(x)-(F_1-F_2)(y)]^2} {4\sin^2(\f{x-y}{2})}\,dy\,dx\\
&\; -\f{1}{2\pi} \int_{\BT\times \BT }  (F_1-F_2)(y)\big[f_1f_2(x)-f_1f_2(y)\big] \cdot \f{(F_1-F_2)(x)-(F_1-F_2)(y)} {4\sin^2(\f{x-y}{2})}\,dy\,dx\\
\leq &\; -\f{1}{2\pi} \|F_0\|_{L^\infty}^{-2} \int_{\BT\times \BT } \f{[(F_1-F_2)(x)-(F_1-F_2)(y)]^2} {4\sin^2(\f{x-y}{2})}\,dy\,dx\\
&\; -\f{1}{4\pi} \int_{\BT\times \BT }  \big[f_1f_2(x)-f_1f_2(y)\big] \cdot \f{(F_1-F_2)^2(x)-(F_1-F_2)^2(y)} {4\sin^2(\f{x-y}{2})}\,dy\,dx.
\end{split}
\]
To obtain the last term above, we exchanged the $x$- and $y$-variables in the integral.
Then \eqref{eqn: time derivative of relative entropy} becomes
\beqo
\begin{split}
&\;\int_\BT F_1(x,t_*) \ln \left(\f{F_1(x,t_*)}{F_2(x,t_*)}\right)dx
+\|F_0\|_{L^\infty}^{-2}
\|(F_1-F_2)(\cdot,t)\|_{L^2_{t_*}\dot{H}^{\f12}}^2 \\
\leq
&\; -\f12 \int_0^{t_*}\int_{\BT\times \BT }  (F_1-F_2)^2 (-\D)^{\f12}(f_1f_2)\,dx\,dt
-2\int_0^{t_*}\int_\BT \CH (f_1-f_2)\cdot (F_1-F_2) F_2^{\f12}\pa_x \sqrt{f_2}\,dx\,dt\\
\leq &\; \f12  \|F_1-F_2\|_{L^4_{t_*} L^4}^2 \|f_1f_2\|_{L^2_{t_*}\dot{H}^1}
+C\|f_1-f_2\|_{L^4_{t_*} L^4} \|F_1-F_2\|_{L^4_{t_*} L^4} \|F_2\|_{L^\infty_{t_*}  L^\infty}^{\f12}\big\|\pa_x\sqrt{f_2}\big\|_{L^2_{t_*} L^2} \\
%
%\leq &\;C\|F_1-F_2\|_{L^4}^2 \|f_0\|_{L^\infty}^2 \|F_0\|_{L^\infty}^{\f12} \left(\big\|\pa_x\sqrt{f_1}\big\|_{L^2} +\big\|\pa_x\sqrt{f_2}\big\|_{L^2}\right)\\
\leq &\;C \|f_0\|_{L^\infty}^2 \|F_0\|_{L^\infty}^{\f12} \left(\big\|\pa_x\sqrt{f_1}\big\|_{L^2_{t_*} L^2} +\big\|\pa_x\sqrt{f_2}\big\|_{L^2_{t_*} L^2}\right) \|F_1-F_2\|_{L^\infty_{t_*} L^2}\|F_1-F_2\|_{L_{t_*}^2\dot{H}^{\f12}}.
%+C\|F_1-F_2\|_{L^4}^2 \|f_0\|_{L^\infty}^2 \|F_0\|_{L^\infty}^{\f12}\big\|\pa_x\sqrt{f_2}\big\|_{L^2} \\
\end{split}
\eeqo
Here $L_{t_*}^p$ denotes the $L^p$-norm on the time interval $[0, t_*]$.
Applying Young's inequality, we find that %for any $t_*\in (0,T)$,
\beq
\begin{split}
&\;\left\|\int_\BT F_1 \ln \left(\f{F_1}{F_2}\right)dx\right\|_{L^\infty_{t_*}}
+\|F_0\|_{L^\infty}^{-2} \|F_1-F_2\|_{L_{t_*}^2\dot{H}^{\f12}}^2\\
\leq &\;C\big(\|f_0\|_{L^\infty},\|F_0\|_{L^\infty}\big) \|F_1-F_2\|_{L_{t_*}^\infty L^2}^2 \int_0^{t_*}\big\|\pa_x\sqrt{f_1}\big\|_{L^2}^2 +\big\|\pa_x\sqrt{f_2}\big\|_{L^2}^2 \,dt.
\end{split}
\label{eqn: estimate for relative entropy}
\eeq
%Here $L_T^p: = L^p([0,T])$ for $p\in [1,+\infty]$.

Now we need a variation of Pinsker's inequality for the relative entropy, under the condition that $F_1$ and $F_2$ with identical $L^1$-norms are both positive, bounded, and bounded away from zero.
The following argument is adapted from that in \cite{kemperman1969optimum} (also see \cite{pollard2022}).
Let $R:= \f{F_1}{F_2}-1$.
Observe that $(1+x)\ln(1+x)-x\geq Cx^2$ on any bounded interval in $(-1,+\infty)$, where $C$ depends on the upper bound of the interval.
Since $\|f_0\|_{L^\infty}^{-1}\leq F_i\leq \|F_0\|_{L^\infty}$ $(i = 1,2)$, we derive that
\[
\begin{split}
\int_\BT F_1 \ln \left(\f{F_1}{F_2}\right)dx
= &\; \int_\BT F_2(1+R) \ln(1+R) - F_2 R\, dx \geq C\int_\BT F_2 R^2\, dx\\
= &\;C\int_\BT F_2^{-1} (F_1-F_2)^2\, dx
\geq C\|F_1-F_2\|_{L^2}^2,
\end{split}
\]
where $C$ essentially depends
%on the upper and lower bounds of $F_i$, and thus
on $\|f_0\|_{L^\infty}$ and $\|F_0\|_{L^\infty}$.

Applying this to \eqref{eqn: estimate for relative entropy} yields
\beqo
\begin{split}
&\;\left\|\int_\BT F_1 \ln \left(\f{F_1}{F_2}\right)dx\right\|_{L^\infty_{t_*}}
+\|F_0\|_{L^\infty}^{-2} \|F_1-F_2\|_{L_{t_*}^2\dot{H}^{\f12}}^2\\
\leq &\;C\big(\|f_0\|_{L^\infty},\|F_0\|_{L^\infty}\big) \left\|\int_\BT F_1 \ln \left(\f{F_1}{F_2}\right)dx\right\|_{L^\infty_{t_*}}  \int_0^{t_*}\big\|\pa_x\sqrt{f_1}\big\|_{L^2}^2 +\big\|\pa_x\sqrt{f_2}\big\|_{L^2}^2 \,dt.
\end{split}
\eeqo
Since $\|\pa_x \sqrt{f_i}\|_{L^2([0,T);L^2(\BT))}<+\infty$, we take $t_*$ to be sufficiently small to conclude
\[
\int_\BT F_1 \ln \left(\f{F_1}{F_2}\right)dx = 0\mbox{ on }[0,t_*],
\]
and therefore, $F_1 = F_2$ on $[0,t_*]$.
Repeating this argument for finitely many times yields the uniqueness on the entire time interval $[0,T)$.

Therefore, we conclude the uniqueness of the dissipative weak solutions on $\BT\times [0,+\infty)$.
\end{proof}

\section{Analyticity and Sharp Decay Estimates for Large Times}
\label{sec: analyticity and sharp decay estimate}

Let us introduce some notations.
Define the Fourier transform of $f\in L^1(\BT)$ as in \eqref{eqn: Fourier transform}.
For $m\in \BN$, let
\[
%\|f\|_{\dot{\CF}^{0,1}}: = \sum_{k\in \BZ} |\hat{f}_k|, \quad
\|f\|_{\dot{\CF}^{m,1}}: = \sum_{k\neq 0} |k|^m |\hat{f}_k|.
\]
With $\nu\geq 0$, we additionally define
\[
%\|f\|_{\CF_\nu^{0,1}}: = \sum_{k\in \BZ} e^{\nu |k|} |\hat{f}_k|,\quad
%\|f\|_{\dot{\CF}_\nu^{0,1}}: = \sum_{k\neq 0} e^{\nu |k|} |\hat{f}_k|. ,\quad
\|f\|_{\dot{\CF}_\nu^{m,1}}: = \sum_{k\neq 0} e^{\nu |k|} |k|^m |\hat{f}_k|.
\]
Clearly, $\|f\|_{\dot{\CF}_0^{m,1}} = \|f\|_{\dot{\CF}^{m,1}}$.

If $\|f_0\|_{\dot{\CF}^{0,1}}$ is suitably small compared to $f_{\infty}$, we can prove that the solution will become analytic in space-time instantly after the initial time.

\begin{prop}\label{prop: instant analyticity for small F01 data}
There exists a universal number $\epsilon>0$ such that the following holds.
Suppose $f_0$ in \eqref{eqn: the main equation} is positive and smooth on $\BT$,  such that $\|f_0\|_{\dot{\CF}^{0,1}}\leq \epsilon f_\infty$.
Let $f$ be the unique strong solution to \eqref{eqn: the main equation}.
Then for all $t_0>0$, $f$ is analytic in the space-time domain $\BT\times (t_0,+\infty)$.
In particular, for any $t\geq 0$,
\beq
\|f(\cdot,t)\|_{\dot{\CF}_{\nu(t)}^{0,1}} \leq 2 \|f_0\|_{\dot{\CF}^{0,1}}.
\label{eqn: analyticity norm}
\eeq
Here
\beqo
\nu(t): = \f12 \ln \big[\th + (1-\th) \exp(2f_\infty t)\big],\quad
\th := C_* f_\infty^{-1}\|f_0\|_{\dot{\CF}^{0,1}},
%\label{eqn: analyticity rate}
\eeqo
with $C_*$ being a universal constant.
In fact, it would suffice to have required $\epsilon < C_*^{-1}$ above.

\begin{rmk}
%The uniqueness of the strong solution follows from Theorem \ref{thm: uniqueness}.
Note that $\nu(t)\sim f_\infty t$ for large $t$ as long as $\epsilon < C_*^{-1}$.
In the view of the linearization of \eqref{eqn: the main equation} around the equilibrium $f_\infty$, i.e.,
$\pa_t f \approx -f_\infty (-\D)^{\f12}f$,
the estimate \eqref{eqn: analyticity norm} provides the sharp rate of analyticity and decay.
\end{rmk}

\begin{proof}
First suppose $f_0$ is band-limited (i.e., only finitely many Fourier coefficients of $f_0$ are non-zero), and $f_0(x)\not \equiv f_\infty$.
It has been shown in Proposition \ref{prop: band-limited initial data} that there is a unique band-limited global solution $f(\cdot,t)$, which is automatically analytic.
In the view of Theorem \ref{thm: uniqueness}, it is also the unique strong solution starting from $f_0$.

To be more quantitative, we recall \eqref{eqn: equation in Fourier space}:
for all $k \geq 0$,
\beqo
\f{d}{dt} \hat{f}_k
= - k \bar{f}(t)\hat{f}_{k}
- \sum_{j\geq 1} 2(k+2j) \hat{f}_{k+j}\overline{\hat{f}_j}
=: - k \bar{f}(t)\hat{f}_{k}
- \hat{N}_k.
\eeqo
The case $k = 0$ corresponds to the energy estimate \eqref{eqn: energy estimate} in Lemma \ref{lem: energy bound and decay of L^p norms},
while for all $k>0$,
\[
\f{d}{dt}\big|\hat{f}_k\big|
= \f1{2 |\hat{f}_k|} \left(\hat{f}_k \f{d}{dt}\overline{\hat{f}_k}+ \overline{\hat{f}_k} \f{d}{dt}\hat{f}_k\right)
\leq -|k|\bar{f}(t)\big|\hat{f}_k\big| + \big|\hat{N}_k\big|.
\]
This derivation is valid for $|\hat{f}_k|>0$, but it is not difficult to see the resulting inequality also holds when $|\hat{f}_k| = 0$.
A similar inequality holds for negative $k$'s.
Summing over $k\neq 0$, with $\nu(t)\geq 0$ to be determined, we derive that
\beq
\begin{split}
&\;\f{d}{dt}\|f(\cdot,t)\|_{\dot{\CF}_{\nu(t)}^{0,1}}
+ \big(\bar{f}(t) - \nu'(t)\big)\|f(\cdot,t)\|_{\dot{\CF}_{\nu(t)}^{1,1}}
\\
\leq &\; C\sum_{k\geq 1} e^{k \nu(t)} \sum_{j\geq 1} (k+2j) \big|\hat{f}_{k+j}(t)\big|\big|\hat{f}_j(t)\big|\\
\leq &\; C \sum_{j\geq 1} e^{-j \nu(t)}\big|\hat{f}_j(t)\big| \sum_{k\geq 1} (k+j) e^{(k+j) \nu(t)}
\big|\hat{f}_{k+j}(t)\big| \\
%\leq &\; C_* \|f(\cdot,t)\|_{\dot{\CF}_{\nu(t)}^{1,1}} \sum_{j\geq 1} e^{-j \nu(t)}\big|\hat{f}_j(t)\big|\\
\leq &\; \f12 C_* e^{- 2\nu(t)}\|f(\cdot,t)\|_{\dot{\CF}_{\nu(t)}^{0,1}} \|f(\cdot,t)\|_{\dot{\CF}_{\nu(t)}^{1,1}},
\end{split}
\label{eqn: inequality for time varying analyticity norm}
\eeq
where $C_*>0$ is a universal constant.
%In the second inequality, we use the fact $j\geq 1$.
Note that, because $f$ is a band-limited strong solution, $\|f(\cdot,t)\|_{\dot{\CF}_{\nu(t)}^{0,1}}$ is finite at all time and it is smooth in $t$ as long as $\nu(t)$ is finite and smooth.

Now let $\nu(t)$ solve
\beq
f_\infty - \nu'(t)
=
C_* e^{- 2\nu(t)}\|f_0\|_{\dot{\CF}^{0,1}},\quad \nu(0) = 0.
\label{eqn: condition on nu(t)}
\eeq
%for all time.
Denote $\th := C_* f_\infty^{-1}\|f_0\|_{\dot{\CF}^{0,1}}$. Then $\mu(t) := e^{2\nu(t)}$ solves
%Then if we replace the inequality in \eqref{eqn: condition on nu(t)} by equality, we would obtain
\[
\mu'(t) = 2f_\infty  \big(\mu(t) - \th\big),\quad \mu(0)  = 1.
\]
As long as $\th<1$, or equivalently $\|f_0\|_{\dot{\CF}^{0,1}} < C_*^{-1} f_\infty$, this equation has a unique positive solution $\mu(t) = \th + (1-\th) \exp(2f_\infty t)$ on $[0,+\infty)$.
Then $\nu(t): = \f12 \ln \mu(t)$ satisfies \eqref{eqn: condition on nu(t)}.

With this $\nu(t)$, we claim that %for all $t\geq 0$,
\beq
\|f(\cdot,t)\|_{\dot{\CF}_{\nu(t)}^{0,1}} \leq 2 \|f_0\|_{\dot{\CF}^{0,1}}\quad \forall\, t\geq 0,
\label{eqn: bound for time varying analyticity norm}
\eeq
which further implies spatial analyticity of $f(\cdot,t)$ for all $t>0$.
Indeed, if not, we let $t_*$ denote the infimum of all times where \eqref{eqn: bound for time varying analyticity norm} does not hold.
By the time-continuity of $\|f(\cdot,t)\|_{\dot{\CF}_{\nu(t)}^{0,1}}$, $t_*$ is positive, and the equality in \eqref{eqn: bound for time varying analyticity norm} is achieved at time $t_*$.
This together with \eqref{eqn: condition on nu(t)} and the fact $\bar{f}(t)\geq f_\infty$ implies
\beqo
\bar{f}(t) - \nu'(t)
\geq \f12 C_* e^{- 2\nu(t)}\|f(\cdot,t)\|_{\dot{\CF}_{\nu(t)}^{0,1}}\quad \forall\, t\in [0,t_*].
\eeqo
%for all $t\in [0,t_*]$.
Combining this with \eqref{eqn: inequality for time varying analyticity norm}, we find
%$\f{d}{dt}\|f(\cdot,t)-f_\infty\|_{\CF_{\nu(t)}^{0,1}} \leq 0$.
%This further implies
\[
\|f(\cdot,t_*)\|_{\dot{\CF}_{\nu(t_*)}^{0,1}}
\leq
\|f_0\|_{\dot{\CF}^{0,1}}
< 2\|f_0\|_{\dot{\CF}^{0,1}},
\]
which contradicts with the definition of $t_*$.

For smooth $f_0$ that is not band-limited, we let $f$ be the unique global solution corresponding to the initial data $f_0$, which is constructed in Section \ref{sec: existence for general initial data}.
Recall the Fej\'{e}r kernel $\mathscr{F}_n$ was defined in \eqref{eqn: Fejer kernel}.
Let $f_n$ be the unique band-limited global solution corresponding to the band-limited initial data $\mathscr{F}_n*f_0$.
Then with $\nu(t)$ defined above, for arbitrary $m\in \BZ_+$ and all $t\geq 0$,
\beqo
\sum_{j\neq 0} e^{\nu(t) \min\{|j|,m\}} |\CF(f_n)_j(t)|
\leq
\|f_n(\cdot,t)\|_{\dot{\CF}_{\nu(t)}^{0,1}} < 2 \|\mathscr{F}_n * f_0\|_{\dot{\CF}^{0,1}}.
\eeqo
Similar to Section \ref{sec: existence for general initial data}, we can prove that, as $n\to +\infty$, $f_n\to f$ in $C_{loc}^k(\BT\times (0,+\infty))$ for any $k\in \BN$.
In fact, we first obtain convergence along a subsequence, and then show the convergence should hold for the whole sequence due to the uniqueness of the solution $f$.
Hence, sending $n\to +\infty$
and then letting $m\to +\infty$ yields \eqref{eqn: bound for time varying analyticity norm} again in this general case.

The space-time analyticity follows from the spatial analyticity of $f$ and the equation \eqref{eqn: the main equation}.
\end{proof}
\end{prop}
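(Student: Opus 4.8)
The plan is to run a bootstrap argument entirely on the Fourier side, using the ODE system \eqref{eqn: equation in Fourier space} together with a time-dependent analytic (Wiener-type) norm whose radius $\nu(t)$ is tuned so that the degenerate but strictly positive linear part of the equation pays for the growth of the weight. The starting point is the elementary fact that $\bar f(t) = \CF(f(\cdot,t))_0 \geq f_\infty$ for all $t$: since $\|F(\cdot,t)\|_{L^1}$ is conserved, Cauchy--Schwarz gives $\bar f(t) = \frac{1}{2\pi}\int_\BT f \geq \big(\frac{1}{2\pi}\int_\BT 1/f\big)^{-1} = f_\infty$ (cf.\ Lemma \ref{lem: energy bound and decay of L^p norms}). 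Thus in \eqref{eqn: equation in Fourier space} the damping coefficient of $-|k|\bar f(t)\hat f_k$ is uniformly at least $f_\infty$.

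First I would treat band-limited positive initial data, where Proposition \ref{prop: band-limited initial data} supplies a genuine band-limited smooth global solution, so that all Fourier sums below are finite and $t\mapsto \|f(\cdot,t)\|_{\dot{\CF}_{\nu(t)}^{0,1}}$ is smooth whenever $\nu$ is. From $\frac{d}{dt}|\hat f_k| \leq -|k|\bar f(t)|\hat f_k| + |\hat N_k|$ with $\hat N_k = \sum_{j>0}2(k+2j)\hat f_{k+j}\overline{\hat f_j}$, multiplying by $e^{\nu(t)|k|}$ and summing over $k\neq 0$ yields
\[
\frac{d}{dt}\|f(\cdot,t)\|_{\dot{\CF}_{\nu(t)}^{0,1}} + \big(\bar f(t)-\nu'(t)\big)\|f(\cdot,t)\|_{\dot{\CF}_{\nu(t)}^{1,1}} \leq \sum_{k\neq 0} e^{\nu(t)|k|}|\hat N_k|.
\]
The nonlinear sum is then handled by splitting the weight as $e^{\nu k}=e^{\nu(k+j)}e^{-\nu j}$, bounding $k+2j\leq 2(k+j)$, and using $e^{-\nu j}\leq e^{-2\nu}e^{\nu j}$ for $j\geq 1$, which gives $\sum_{k\neq 0}e^{\nu|k|}|\hat N_k|\leq \frac12 C_* e^{-2\nu(t)}\|f\|_{\dot{\CF}_{\nu}^{0,1}}\|f\|_{\dot{\CF}_{\nu}^{1,1}}$ for a universal $C_*$. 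Then choose $\nu(t)$ to solve $f_\infty - \nu'(t) = C_* e^{-2\nu(t)}\|f_0\|_{\dot{\CF}^{0,1}}$, $\nu(0)=0$; the substitution $\mu = e^{2\nu}$ linearizes this to $\mu' = 2f_\infty(\mu-\theta)$, $\mu(0)=1$, with $\theta = C_* f_\infty^{-1}\|f_0\|_{\dot{\CF}^{0,1}}$, whose solution $\mu(t) = \theta + (1-\theta)e^{2f_\infty t}$ stays positive on $[0,\infty)$ precisely when $\theta<1$ — which is where the hypothesis $\|f_0\|_{\dot{\CF}^{0,1}}\leq \epsilon f_\infty$ with $\epsilon<C_*^{-1}$ enters, and which also forces $\nu(t)\sim f_\infty t$ at large times.

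With this $\nu$, I would close the estimate by a continuity argument: if the bound $\|f(\cdot,t)\|_{\dot{\CF}_{\nu(t)}^{0,1}}\leq 2\|f_0\|_{\dot{\CF}^{0,1}}$ first fails at some $t_*$, then $t_*>0$ and equality holds there; on $[0,t_*]$ the bootstrap assumption together with $\bar f\geq f_\infty$ and the defining ODE for $\nu$ gives $\bar f(t)-\nu'(t)\geq \frac12 C_* e^{-2\nu(t)}\|f(\cdot,t)\|_{\dot{\CF}_{\nu(t)}^{0,1}}$, so the differential inequality forces $\frac{d}{dt}\|f\|_{\dot{\CF}_{\nu}^{0,1}}\leq 0$ on $[0,t_*]$, hence $\|f(\cdot,t_*)\|_{\dot{\CF}_{\nu(t_*)}^{0,1}}\leq \|f_0\|_{\dot{\CF}^{0,1}}<2\|f_0\|_{\dot{\CF}^{0,1}}$, a contradiction. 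The resulting exponentially weighted $\ell^1$ control on the Fourier coefficients gives spatial analyticity in a strip of half-width $\nu(t)>0$ for each $t>0$, uniformly on $[t_0,\infty)$, and space-time analyticity follows by repeatedly differentiating \eqref{eqn: the main equation} in time. For a general smooth positive $f_0$ I would approximate by $\mathscr F_n * f_0$: the Fej\'er multipliers have modulus at most one, so $\|\mathscr F_n*f_0\|_{\dot{\CF}^{0,1}}\leq \|f_0\|_{\dot{\CF}^{0,1}}$, and Jensen's inequality gives $f_{\infty,n}\geq f_\infty$, so the smallness hypothesis is inherited by every approximant; one then passes to the limit using $C^k_{loc}(\BT\times(0,\infty))$ convergence of $f_n$ to the unique strong solution (Theorem \ref{thm: uniqueness}), working with truncated weights $e^{\nu(t)\min\{|j|,m\}}$ before letting $m\to\infty$. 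I expect the real obstacle to be the nonlinear estimate for $\sum_{k\neq 0}e^{\nu|k|}|\hat N_k|$: extracting the \emph{sharp} gain $e^{-2\nu(t)}$ rather than a crude $e^{-\nu(t)}$ requires distributing the analytic weight correctly between the high- and low-frequency factors and exploiting $|j|,|k|\geq 1$, and it is exactly this power that makes $\mu$ satisfy the precise linear ODE above and thereby yields the sharp analyticity radius and decay rate; a lossier bound would still give instantaneous analyticity but not the stated $\nu(t)$.
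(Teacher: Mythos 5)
Your proposal follows essentially the same route as the paper: the same $\bar f\geq f_\infty$ observation, the same Fourier-side differential inequality with time-dependent weight $e^{\nu(t)|k|}$, the same nonlinear estimate extracting the $e^{-2\nu}$ gain from $j\geq 1$, the same defining ODE for $\nu$ linearized via $\mu=e^{2\nu}$, the same continuity/bootstrap argument, and the same Fej\'er approximation with truncated weights for general smooth data. The one spot where you add detail beyond the paper's exposition — noting that $\|\mathscr F_n*f_0\|_{\dot{\CF}^{0,1}}\leq\|f_0\|_{\dot{\CF}^{0,1}}$ and $f_{\infty,n}\geq f_\infty$ (Jensen), so the smallness hypothesis and the bootstrap close with the \emph{original} $\nu(t)$ for every approximant — is correct and is implicitly needed by the paper's argument.
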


\begin{cor}[Property \eqref{property: large-time analyticity} of the weak solution constructed in Theorem \ref{thm: main thm}]
\label{cor: analyticity of the solution}
Under the assumptions of Theorem \ref{thm: main thm}, there exists $T_* > 0$ depending only on $\|f_0\|_{L^1}$ and $\|F_0\|_{L^1}$, such that the constructed solution $f$ of \eqref{eqn: the main equation} is analytic in the space-time domain $\BT\times (T_*,+\infty)$.
It satisfies that, for any $t>0$,
\beqo
\|f(\cdot,t+T_*)\|_{\dot{\CF}_{\nu(t)}^{0,1}} \leq C f_\infty,
\eeqo
with
\[
\nu(t)\geq \f12 \ln \left[1 + \exp(2f_\infty t)\right] - \f12 \ln 2,
\]
and $C>0$ being a universal constant.
\begin{proof}
Using the notations in Proposition \ref{prop: instant analyticity for small F01 data}, we take $\epsilon$ there to satisfy $C_*\epsilon \leq \f12$.
By the property \eqref{property: long-time convergence} of the solution (also see Proposition \ref{prop: boundedness and decay of H^k norms of h}), there is $T_*>0$ depending only on $\|f_0\|_{L^1}$ and $\|F_0\|_{L^1}$, such that %we have
\[
\|f(\cdot,T_*)\|_{\dot{\CF}^{0,1}(\BT)} \leq C\|f(\cdot,T_*)\|_{\dot{H}^2} \leq \epsilon f_\infty.
\]
Here we used the embedding $H^2(\BT)\hookrightarrow \CF^{0,1}(\BT)$. Then we apply Proposition \ref{prop: instant analyticity for small F01 data} to conclude.
In particular, since $\th \leq \f12$ in this case, the desired estimate for $\nu(t)$ follows.
\end{proof}
\end{cor}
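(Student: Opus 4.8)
The plan is to reduce the statement to Proposition \ref{prop: instant analyticity for small F01 data} by simply waiting long enough for the constructed solution to enter the small-data regime measured in the Wiener norm $\dot{\CF}^{0,1}$. First I would fix the universal number $\epsilon>0$ from Proposition \ref{prop: instant analyticity for small F01 data}, shrinking it if necessary so that $C_*\epsilon\le\f12$, where $C_*$ is the universal constant appearing there. Note that since $\|F(\cdot,t)\|_{L^1}$ is conserved (Lemma \ref{lem: energy bound and decay of L^p norms}), the equilibrium $f_\infty$ in \eqref{eqn: equilibrium} is unchanged if we restart the solution at any later base time, so all quantities below are referred to the same $f_\infty$.

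The next step uses the long-time decay from Proposition \ref{prop: boundedness and decay of H^k norms of h} (equivalently property \eqref{property: long-time convergence} of Theorem \ref{thm: main thm}): $\|f(\cdot,t)-f_\infty\|_{\dot{H}^2(\BT)}$ decays to $0$ exponentially as $t\to+\infty$, with a rate and prefactor depending \emph{only} on $\|f_0\|_{L^1}$ and $\|F_0\|_{L^1}$. Hence there is a time $T_*>0$, depending only on those two norms, making $\|f(\cdot,T_*)-f_\infty\|_{\dot{H}^2}$ as small as we wish. Because the $\dot{\CF}^{0,1}$-seminorm involves only nonzero Fourier modes, $\|f(\cdot,T_*)\|_{\dot{\CF}^{0,1}}=\|f(\cdot,T_*)-f_\infty\|_{\dot{\CF}^{0,1}}$, and the elementary embedding $H^2(\BT)\hookrightarrow\CF^{0,1}(\BT)$ (from $\sum_{k\ne0}|k|^{-2}<\infty$ and Cauchy--Schwarz) gives $\|f(\cdot,T_*)\|_{\dot{\CF}^{0,1}}\le C\|f(\cdot,T_*)-f_\infty\|_{\dot{H}^2}$; choosing $T_*$ accordingly yields $\|f(\cdot,T_*)\|_{\dot{\CF}^{0,1}}\le\epsilon f_\infty$.

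Then I would restart the equation at $T_*$: put $g(x,t):=f(x,t+T_*)$. By the instant smoothing in Theorem \ref{thm: main thm}\eqref{property: positivity and smoothness}, $g_0:=f(\cdot,T_*)$ is positive and smooth, and $g$ is the unique strong solution to \eqref{eqn: the main equation} with this datum (uniqueness via Theorem \ref{thm: uniqueness}, whose hypotheses hold since $g_0$ is smooth and bounded above and below). Since $\|g_0\|_{\dot{\CF}^{0,1}}\le\epsilon f_\infty$ with $C_*\epsilon\le\f12$, Proposition \ref{prop: instant analyticity for small F01 data} applies to $g$: it is analytic in $\BT\times(t_0,+\infty)$ for every $t_0>0$, hence $f$ is analytic in $\BT\times(T_*,+\infty)$, and for all $t\ge0$ one has $\|g(\cdot,t)\|_{\dot{\CF}_{\nu(t)}^{0,1}}\le 2\|g_0\|_{\dot{\CF}^{0,1}}\le 2\epsilon f_\infty\le Cf_\infty$, with $\nu(t)=\f12\ln[\theta+(1-\theta)\exp(2f_\infty t)]$ and $\theta=C_*f_\infty^{-1}\|g_0\|_{\dot{\CF}^{0,1}}\le C_*\epsilon\le\f12$. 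Finally, for $t\ge0$ the inequality $\theta+(1-\theta)e^{2f_\infty t}\ge\f12(1+e^{2f_\infty t})$ is equivalent to $\theta(1-e^{2f_\infty t})\ge\f12(1-e^{2f_\infty t})$, which holds because $1-e^{2f_\infty t}\le0$ and $\theta\le\f12$; taking $\f12\ln$ gives $\nu(t)\ge\f12\ln[1+\exp(2f_\infty t)]-\f12\ln 2$, as claimed.

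The argument is essentially an assembly of earlier results, so I do not expect a genuine new obstacle. The one point that requires care is that the waiting time $T_*$ must be controlled purely by $\|f_0\|_{L^1}$ and $\|F_0\|_{L^1}$; this forces us to invoke the \emph{quantitative} exponential decay of Proposition \ref{prop: boundedness and decay of H^k norms of h} (whose rate and constants depend only on these two norms), rather than mere convergence, and to fix $\epsilon$ universally before $T_*$ is selected.
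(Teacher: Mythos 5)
Your argument is correct and follows essentially the same route as the paper: wait until Proposition \ref{prop: boundedness and decay of H^k norms of h} pushes the $\dot{H}^2$-norm (hence $\dot{\CF}^{0,1}$-norm, via the Cauchy--Schwarz embedding) below $\epsilon f_\infty$ at a time $T_*$ controlled by $\|f_0\|_{L^1}$ and $\|F_0\|_{L^1}$, then restart and invoke Proposition \ref{prop: instant analyticity for small F01 data}. Your extra observations --- that the homogeneous seminorm ignores the mean so the centering at $f_\infty$ is automatic, that $f_\infty$ is invariant under restarting, and the explicit verification of $\nu(t)\ge\tfrac12\ln[1+e^{2f_\infty t}]-\tfrac12\ln 2$ from $\theta\le\tfrac12$ --- are all correct details the paper leaves implicit.
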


\appendix

\section{An $H^{-\f12}$-estimate for $F$}
\label{sec: H-1/2 estimate for F}
It is worth mentioning another interesting estimate for $F$, although it is not used in the other parts of the paper.
For convenience, we assume $f$ to be a positive strong solution to \eqref{eqn: the main equation} on $\BT\times [0,T)$.
Define
\[
\bar{F} : = \f{1}{2\pi}\int_\BT F(x,t)\,dx.
\]
By \eqref{eqn: equilibrium} and Lemma \ref{lem: energy bound and decay of L^p norms}, $\bar{F} = \f{1}{f_\infty}$, which is a time-invariant constant.
Then we have the following estimate.

%\textcolor{red}{We need to be careful about the following derivation, since negative powers of $(-\D)$ can only be applied to mean zero quantities.}

\begin{lem}
\label{lem: H-1/2 estimate of F}
\beqo
\f{d}{dt}\big\|(-\D)^{-\f14} (F-\bar{F}) \big\|_{L^2}^2
+\int_\BT f \cdot (\CH F)^2 \,dx  = 2\pi \bar{F} - \bar{F}^2\int_\BT f\,dx\leq 0.
\eeqo

\begin{proof}

By the Cotlar's identity \eqref{eqn: Cotlar} for mean-zero functions
\[
(\CH F)^2 - \big(F-\bar{F}\big)^2 = 2\CH\big((F-\bar{F})\CH F\big).
\]
Hence,
\beqo
(\CH F)^2 - F^2 -\bar{F}^2 + 2F\bar{F} = 2\CH (F\CH F) - 2\bar{F}\CH\CH F
= 2\CH (F\CH F) + 2\bar{F} (F-\bar{F}),
%\label{eqn: Cotlar for general functions}
\eeqo
which simplifies to 
\beq
(\CH F)^2 - F^2 +\bar{F}^2
= 2\CH (F\CH F).
\label{eqn: Cotlar for general functions}
\eeq
Using this identity and the $F$-equation, we find that
\beqo
\begin{split}
&\; \f{d}{dt}\big\|(-\D)^{-\f14} (F-\bar{F}) \big\|_{L^2}^2\\
= &\; 2\int_\BT \pa_t F\cdot (-\D)^{-\f12} (F-\bar{F})\,dx \\
= &\;- 2\int_\BT  \CH f \cdot F \cdot \pa_x (-\D)^{-\f12}(F-\bar{F})\,dx
= - 2\int_\BT f \cdot \CH\big( F \cdot \CH F\big) \,dx \\
= &\; - \int_\BT f \cdot \big( (\CH F)^2 - F^2+\bar{F}^2\big) \,dx \\
= &\; - \int_\BT f \cdot (\CH F)^2 \,dx  + \int_\BT F\,dx - \bar{F}^2 \int_\BT f\,dx.
\end{split}
%\label{eqn: derivation for H_minus norm of F}
\eeqo
Then the desired estimate follows by rearranging the equation and applying the Cauchy-Schwarz inequality.
\end{proof}
\end{lem}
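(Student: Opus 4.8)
The plan is to differentiate the $\dot H^{-1/2}$ functional in time, substitute the conservation law \eqref{eqn: equation for big F}, move the $x$-derivative onto the low-regularity factor by integration by parts, and then collapse the resulting bilinear expression using a Cotlar-type identity. Since $\bar F = 1/f_\infty$ is time-invariant (as recorded above via Lemma \ref{lem: energy bound and decay of L^p norms}), we have $\pa_t(F-\bar F) = \pa_x(\CH f\cdot F)$, so
\[
\f{d}{dt}\big\|(-\D)^{-\f14}(F-\bar F)\big\|_{L^2}^2 = 2\int_\BT \pa_x(\CH f\cdot F)\cdot (-\D)^{-\f12}(F-\bar F)\,dx.
\]
Integrating by parts and using the Fourier-symbol relations $(-\D)^{\f12} = \pa_x\CH$ (so that $\pa_x(-\D)^{-\f12} = -\CH$ on mean-zero functions), the skew-adjointness of $\CH$ on $L^2(\BT)$, and $\CH\bar F = 0$, I would rewrite the right-hand side as $-2\int_\BT f\,\CH(F\CH F)\,dx$. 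The boundary terms vanish by periodicity and the $C^1$ regularity of the strong solution.

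The second step is to establish the non-mean-zero analogue of Cotlar's identity, namely $2\CH(F\CH F) = (\CH F)^2 - F^2 + \bar F^2$. I would obtain this by applying \eqref{eqn: Cotlar} to the mean-zero function $F-\bar F$, expanding the squares, and simplifying with $\CH\CH(F-\bar F) = -(F-\bar F)$ and $\CH\bar F = 0$; the $\bar F$-linear terms cancel and only the constant $\bar F^2$ survives. Substituting this identity in, the time derivative equals $-\int_\BT f\,(\CH F)^2\,dx + \int_\BT f F^2\,dx - \bar F^2\int_\BT f\,dx$, and the pointwise identity $fF^2 = F$ (because $F = 1/f$) turns the middle term into $\int_\BT F\,dx = 2\pi\bar F$. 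Rearranging produces the claimed equality.

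Finally, for the sign of the right-hand side I would use Cauchy--Schwarz in the form $2\pi = \int_\BT \sqrt f\,\sqrt F\,dx \le \big(\int_\BT f\,dx\big)^{1/2}\big(\int_\BT F\,dx\big)^{1/2} = \big(2\pi\bar F\int_\BT f\,dx\big)^{1/2}$, which gives $\bar F^2\int_\BT f\,dx \ge 2\pi\bar F$ and hence $2\pi\bar F - \bar F^2\int_\BT f\,dx \le 0$. I expect no genuine obstacle; the only points requiring care are the correct bookkeeping of signs and of the additive constant when passing from \eqref{eqn: Cotlar} to the non-mean-zero identity, and checking that the integration by parts is justified for the assumed class of positive strong solutions.
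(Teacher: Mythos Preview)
Your proposal is correct and follows essentially the same route as the paper: differentiate the $\dot H^{-1/2}$ functional, use \eqref{eqn: equation for big F} and integrate by parts with $\pa_x(-\D)^{-1/2} = -\CH$, move $\CH$ onto $F\CH F$ by skew-adjointness, apply the non-mean-zero Cotlar identity $2\CH(F\CH F) = (\CH F)^2 - F^2 + \bar F^2$, simplify via $fF^2 = F$, and finish with Cauchy--Schwarz. Your explicit verification of the sign via $2\pi = \int_\BT \sqrt f\,\sqrt F\,dx \le \big(\int_\BT f\big)^{1/2}\big(\int_\BT F\big)^{1/2}$ is exactly the Cauchy--Schwarz step the paper invokes without spelling out.
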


Under the additional assumption $F_0-\bar{F}\in H^{-\f12}(\BT)$, Lemma \ref{lem: H-1/2 estimate of F} implies $F-\bar{F}\in L^\infty_{T} H^{-\f12}$ and $f^{\f12}\CH F\in L^2_{T} L^2$.
Interestingly, this provides another way of making sense of the nonlinear term
\[
\int_{\BT\times [0,T)} \pa_x \va \cdot \CH f \cdot F\,dx\,dt
\]
in the weak formulation \eqref{eqn: weak formulation} (cf.\;Section \ref{sec: estimates for h}).
Note that the additional assumption $F_0-\bar{F}\in H^{-\f12}$ has the same scaling as the assumption $F_0\in L^1$ that we have been using.

Indeed, by Cotlar's identity \eqref{eqn: Cotlar for general functions} and the Cauchy-Schwarz inequality, % we can derive that %(also see Remark \ref{rmk: an identity})
\[
\begin{split}
\int_\BT F\cdot (\CH f)^2\,dx
= &\;\int_\BT F\cdot \big( (\CH f)^2-f^2 + \bar{f}^2\big)\,dx + \int_\BT f\,dx - \bar{f}^2 \int_\BT F\,dx\\
\leq &\;\int_\BT  F\cdot 2\CH ( f \CH f)\,dx 
= - 2 \int_\BT  \CH F\cdot  f \CH f \,dx\\
\leq &\; 2 \big\|f^{\f12}\CH F\big\|_{L^2} \big\|f^{\f12}\big\|_{L^6}\big\|\CH f\big\|_{L^3}  \\
\leq &\; C \big\|f^{\f12}\CH F\big\|_{L^2} \|f\|_{L^3}^{\f32}.
\end{split}
\]
By the interpolation inequality \cite{kozono2008remarks},
$\|f\|_{L^3}^{3/2} \leq C \|f\|_{L^1}^{1/2} \|f\|_{\dot{H}^\f12}$.
Hence, by Lemma \ref{lem: energy bound and decay of L^p norms}, Lemma \ref{lem: H-1/2 estimate of F}, and the assumption $F_0-\bar{F}\in H^{-\f12}$, we obtain that $F^{\f12}\CH f \in L_{T}^2L^2$.

Then the facts $F^{\f12}\in L^\infty_T L^2$ and $F^{\f12}\CH f\in L_{T}^2 L^2$ allow us to well define the above nonlinear term in the weak formulation.
Alternatively, the facts $F-\bar{F}\in L_{T}^\infty H^{-\f12}$ and $\CH f\in L_T^2 H^{\f12}$ (see Lemma \ref{lem: energy bound and decay of L^p norms}) also suffice.
In either case, one can show that
\[
\int_0^t \int_{\BT} \pa_x \va \cdot \CH f \cdot F\,dx\,dt = O(t^{1/2}),
\]
which may lead to a simpler proof of existence of the weak solutions.

%\bibliographystyle{unsrt}
%\bibliography{tangential_model}

\end{document}